\renewcommand{\bibnamedash}{\leavevmode\raise3pt\hbox to3em{\hrulefill}\space}
\date{Mai 2021}
\title{Asymptotic counting of minimal surfaces in hyperbolic $3$-manifolds} 
\author{François Labourie}
\address{LJAD, Université Côte d'Azur}
\email{
  francois.labourie@univ-cotedazur.fr}
\newcommand{\T}{\mathsf T}
\renewcommand{\leq}{\leqslant}
\renewcommand{\geq}{\geqslant}
\renewcommand{\epsilon}{\varepsilon}
\newcommand{\sld}{\mathsf{SL}_2(\mathbb R)}
\newcommand{\psld}{\mathsf{PSL}_2(\mathbb R)}
\newcommand{\hh}{{\bf H}^2}
\newcommand{\coloneqq}{\mathrel{\mathop:}=}
\newcommand{\defeq}{\coloneqq}
\newcommand{\eqqcolon}{=\mathrel{\mathop:}}
\newcommand{\eqdef}{\eqqcolon}
\newcommand{\vol}{{\rm d}\operatorname{vol}}
\newcommand{\Vol}{\operatorname{Vol}}
\newcommand{\area}{{\rm d}\operatorname{area}}
\newcommand{\Area}{\operatorname{Area}}
\newcommand{\pslc}{\mathsf{PSL}(2,\mathbb C)}
\newcommand{\hhh}{\mathbf{H}^3}
\newcommand{\bhhh}{\partial_\infty\hhh}
\newcommand{\MinA}{\operatorname{MinArea}} 
\newcommand{\Env}{\operatorname{Env}} 
\newcommand{\iso}{\operatorname{Isom}} 
\newcommand{\qf}{quasi-Fuchsian}
\newtheorem{question}{Question}
\newcommand{\seq}[1]{ \{#1_m\}_{m\in\mathbb N}}
\newcommand{\seqm}[1]{ \{{#1}\}_{m\in\mathbb N}}
\begin{document}

\renewcommand{\labelenumi}{\theenumi}
\renewcommand{\theenumi}{(\roman{enumi})}%

\maketitle
\begin{flushleft}
\advance\leftskip .3\textwidth
	\it To Srishti Dhar Chatterji, \\
	who attended séminaire Bourbaki for most of half a century.
\end{flushleft}
\tableofcontents

\section*{Introduction}

The study of the geodesic flow in closed negatively curved manifolds is a beautiful mix of topology, Riemannian geometry, geometric group theory and ergodic theory. We know in this situation that closed geodesics are in one-to-one correspondence with conjugacy classes of elements of the fundamental group, or equivalently, with the set of homotopy classes of maps of circles in the manifold. Even though  closed geodesics are infinite in number, we have a good grasp ---thanks to the notion of {\em topological entropy}--- of how the number of these geodesics grows with respect to the length. We also have a computation of this topological  entropy in hyperbolic spaces by \textcite{Bowen:1972} and \textcite{Margulis:1969ve} and rigidity results for this entropy by \textcite{Besson:1995um} and \textcite{Hamenstadt:1990tx}.

While the statements of this first series of results  seem to deal only with  closed geodesics, the foliation of the unit tangent bundle by orbits of the geodesic flow plays a fundamental role. The study of invariant measures by the geodesic  flow is a crucial tool, and the equidistribution of closed geodesics by \textcite{Bowen:1972}  and  \textcite{Margulis:1969ve} for hyperbolic manifolds  a central result.  We refer to section~\ref{sec:par} for more precise definitions, results and references.  

For many reasons ---as we discuss in section~\ref{sec:tg}--- closed totally geodesic submanifolds of dimension at least $2$ are quite rare. However, in constant curvature, the foliation of the Grassmannian of $k$-planes coming from totally geodesic planes is a natural generalization of the geodesic flow and several crucial results of \textcite{Ratner:1991wl,Ratner:1991tu,Shah:1991wr} as well as   \textcite{McMullen:2017ub} describe closed invariant sets and invariant measures. This foliation stops to make sense in variable curvature, at least far away from the constant curvature situation, although for metrics close to hyperbolic ones, a result by \textcite{Gromov:1991uv} ---see also \cite{Lowe:2020vu}--- shows that the foliation of the Grassmann bundle persists when one replaces totally geodesic submanifolds by minimal ones.

If we move in the topological direction, going from circles to surfaces, Kahn--Markovi\'c Surface  Subgroup Theorem (\cite{Kahn:2009wh}) provides the existence of many surface subgroups in the fundamental group of a hyperbolizable $3$-manifold $M$. A subsequent result of \textcite{Kahn:2010uo}  gives an asymptotic of the number of these surface groups with respect to the genus ---see  Theorem~\ref{theo:KMcount}. 

However this  asymptotic counting does not involve the underlying Riemannian geometry as opposed to the topological entropy that we discussed in the first paragraph. The next step is to use  fundamental results of \textcite{Schoen:1979} and \textcite{Sacks:1982}, which tells us that every such surface group can be realized by a minimal surface ---although non necessarily uniquely.

In \textcite{Calegari:2020uo}, the authors propose a novel idea:  count asymptotically with respect to the area these minimal surfaces, but when the boundary at infinity of those minimal surfaces becomes more and more circular, or more precisely are  $K$-quasicircles, with $K$ approaching~$1$.  The precise definition of this counting requires the description of quasi-Fuchsian groups and their boundary at infinity, done in section~\ref{par:qf}, and their main result (Theorem~\ref{theo:main}) is presented in section~\ref{sec:last}. These results define an entropy-like constant  $E(M,h)$ for minimal surfaces in a Riemannian  manifold  $(M,h)$ of curvature less  than $-1$. The main result of  \textcite{Calegari:2020uo} is to compute it for hyperbolic manifolds, gives bounds in the general case and most notably proves a rigidity result: $E(M,h)=2$ if and only if $h$ is hyperbolic. Altogether, these results  mirror  those for closed geodesics.

When one moves to studying solution of elliptic partial differential equations, for instance minimal surfaces or pseudo-holomophic curves, the situation is different from the chaotic behavior of the geodesic flow. While there is a huge literature about moduli spaces of solutions when one imposes constraints such as homology classes, we do not have that many results describing a moduli space of all solutions: possibly immersed with dense images, in other words to continue the process  for minimal surfaces described in the introduction of \textcite{Gromov:1991uv} for geodesics: {\em if one wishes to understand closed geodesics not as individuals but as members of a community one has to look at all (not only closed) geodesics in X which form an $1$-dimensional foliation of the projectivized tangent bundle.}

The presentation of these notes shifts around the ideas used in \textcite{Calegari:2020uo} and follows more directly the philosophy introduced in 
\textcite{Gromov:1991uv}. We  focus on the construction of such a moduli space
---that we call  the {\em phase space of stable minimal surfaces}--- and its
topological  properties ---see section~\ref{par:phasespace} and
Theorem~\ref{theo:phasespace}. These properties are a rephrasing of
Theorem~\ref{coro:KK} about quasi-isometric properties of stable minimal
surfaces, relying on results of  \textcite{Seppi:2016ut} and a ``Morse type Lemma'' argument  by \textcite[Theorem 3.1]{Calegari:2020uo}. This space is  the analogue, in our situation,  of the  geodesic flow and   the $\mathbb R$-action is  replaced by an $\sld$-action. 

Then we move to studying  $\sld$-invariant measures on this phase space and show they are related to what we call {\em laminar currents} which are the analogues in our situation of geodesic currents ---see \textcite{Bonahon:1997tl}. The main result is now  an equidistribution result in this situation: Theorem~\ref{theo:equi}. This theorem follows from the techniques of the proof of Surface Subgroup Theorem using the presentation given in \textcite{Kahn:2018wx}.

This Equidistribution Theorem and the construction of the phase space allows, by comparing the counting with respect to area and the genus ---as in  \cite{Kahn:2010uo}--- to proceed quickly to the  proof of the results of \textcite{Calegari:2020uo} when, for the rigidity result, we assume that $h$ is close enough to a hyperbolic metric.

The whole article of \textcite{Calegari:2020uo} mixes beautiful ideas from many subjects, adding to the mix of  topology, Riemannian geometry, geometric group theory and ergodic theory used in the study of the geodesic flow,  a pinch of geometric analysis. The approach given in these notes is not just to present the proof but also to take the opportunity to tour some of the fundamental results in these various mathematics\footnote{The introduction of \textcite{Calegari:2020uo} also addresses minimal hypersurfaces in higher dimension that we do not discuss here}. We take some leisurely approach and explain some of the main results and take  the time to give a few simple proofs and elementary discussions:  the clever proof of Thurston showing that there are only finitely many surface groups  of a given genus in the fundamental group of a hyperbolic manifold, the discussion of stable minimal surfaces,  the geometric analysis trick that derives from a rigidity result (here the characterization of the plane as the unique stable minimal surface in $\mathbb R^3$) some compactness results (proposition~\ref{pro:F-Sch}).

During the preparation of these notes, I benefited from the help of many colleagues, as well as the insight of the authors. I want to thank them here for their crucial input: Dick Canary, Thomas Delzant, Olivier Guichard, Fanny Kassel,  Shahar Mozes, Hee Oh, Pierre Pansu, Andrea Seppi, Jérémy Toulisse and  Mike Wolf.

\section{Counting geodesics and equidistribution}\label{sec:par}
When $(M,h)$ is a negatively curved manifold, there is a one-to-one correspondence between conjugacy classes of elements of $\pi_1(M)$ and closed geodesics.  Even though there are infinitely many closed geodesics,  we can count them ``asymptotically''. Equivalently, this will give an asymptotic  count of  the conjugacy classes of elements of $\pi_1(M)$, or to start a point of view that we shall pursue later, the set of free homotopy classes of maps of $S^1$ in $M$. 

We review here some important results that will be useful in our discussion and serve as a motivation.

\subsection{Entropy and asymptotic counting of geodesics} Let $(M,h)$ be a closed manifold of negative curvature. Fixing a positive constant $T$, there are only finitely many   closed geodesics of length less than $T$. Let us define
$$
\Gamma_h(T)\defeq\{\hbox{geodesic }\gamma\mid \operatorname{length}(\gamma)\leq T\}\ .
$$
The  following limit, when it is defined,
$$
{\rm h}_{top}(M,h)\defeq \lim_{t\to\infty}\frac{1}{T}\log\left(\sharp \Gamma_h(T) \right)\  ,
$$
is called the  {\em topological entropy of $M$}.   We will see it is always defined in negative curvature. It measures the exponential growth of the number of geodesics with respect to the length. The topological entropy is related to the {\em volume entropy of $M$} defined by 
$$
{\rm h}_{vol}(M,h)=\liminf_{t\to\infty}\frac{1}{R}\log\bigl(\Vol(B(x,R)) \bigr)\ ,
$$
where $B(x,R)$ is the ball of radius $R$ in the universal cover $\tilde M$ of $M$, $x$ any point in $\tilde M$. The volume entropy does not depend on the choice of the point $x$ and we have
\begin{theo}\label{theo:entropy} Let $(M,h)$ be a closed negatively curved manifold.
\begin{enumerate}
	\item The topological  ${\rm h}_{top}(M,h)$ is well-defined. When $h_0$ is hyperbolic\footnote{that is when the curvature is constant and equal to $-1$}, $${\rm h}_{top}(M,h_0)=\dim(M)-1\ .$$	
	\item We have 
$$
{\rm h}_{top}(M,h)={\rm h}_{vol}(M,h)=\lim_{R\to\infty}\frac{\log\left(\sharp\{\gamma\in\pi_1(M)\mid d_M(\gamma.x,x)\leq R\}\right)}{R}\ .
$$

\end{enumerate}
\end{theo}
The first item is a celebrated result by \textcite{Bowen:1972} and \textcite{Margulis:1969ve}. The second item is due to  \textcite{Manning:1979vk}.

\subsubsection{Rigidity of  the entropy}
We have several  rigidity theorems for the entropy.
 First in the presence of an upper bound on the curvature, a metric on closed manifold has curvature less than $-1$, then   	 $$
	{\rm h}_{vol}(M,h)\geq \dim(M)-1\ ,
	$$
	 with equality if and only if $h$ is hyperbolic. 
For deeper results in the presence of upper bounds on the curvature, see \textcite{Pansu:1989ug} and   \textcite{Hamenstadt:1990tx}.
 As a special case  of \textcite{Besson:1995um}, we have, when we drop the condition on the curvature 
\begin{theo}
	Let $(M,h_0)$ be a hyperbolic manifold of dimension $m$ and $h$ another metric on $M$,  then
	$$
	{\rm h}_{vol}(M,h)^m\Vol(M,h)\geq {\rm h}_{vol}(M,h_0)^m\Vol(M,h_0)\ .
	$$
	The equality implies that $h$ has constant curvature.
\end{theo}
In this expos\'e, we will only use the case of $m=2$, which is due to \textcite{Katok:1982uv}.

\subsection{Equidistribution} This asymptotic counting has a counterpart
called {\em equidistribution}. Let us first recall that geodesics are
solutions of some second order differential equation, and we may as well
consider non closed geodesics in the Riemannian manifold $M$. Let us consider the {\em phase space} $\mathcal G$ of this equation as the space of maps $\gamma$ from $\mathbb R$ to $M$, where $\gamma$ is an arc length parametrized solution of the equation. The precomposition by translation gives a right action by $\mathbb R$, and thus $\mathcal G$ is partitioned into {\em leaves} which are orbits of the right action of $\mathbb R$. The space $\mathcal G$ canonically identifies with the unit tangent bundle~$\mathsf U M$ by the map $\gamma\mapsto(\gamma(0),\dot\gamma(0))$, and the above $\mathbb R$-action corresponds to the action  of the {\em geodesic flow}.

We may thus associate to each closed orbit~$\gamma$ of length~$\ell$ a unique probability measure~$\delta_\gamma$ on  $\mathcal G=\mathsf U M$ supported on $\gamma$, $\mathbb R$-invariant and so that for any function on~$\mathsf U M$ 
$$
\int_{\mathsf U M} f {\rm d}\delta_\gamma\defeq\frac{1}{\ell}\int_0^\ell f(\gamma(s)){\rm d}s\ .
$$
When $M$ is hyperbolic, another natural and  $\mathbb R$-invariant probability measure  comes from the left invariant $\mu_{Leb}$ measure (under the group of isometries) in the universal cover.

The next result is intimately related to Theorem~\ref{theo:entropy} and also due to \textcite{Bowen:1972} and \textcite{Margulis:1969ve}.
\begin{theo}\label{theo:equi} Assume $(M,h_0)$ is hyperbolic, then
$$ \lim_{T\to\infty} \frac{1}{\sharp \Gamma_{h_0}(T)}\sum_{\gamma\in  \Gamma_{h_0}(T)}\delta_\gamma=\mu_{{Leb}}\ .$$
\end{theo}

\section{Totally geodesic submanifolds of higher dimension}\label{sec:tg}

As a first attempt of generalization, it is quite tempting to understand what happens to {\em totally geodesic} submanifolds of higher dimension, where by totally geodesic we mean complete  and such that  any geodesic in the submanifold is a  geodesic for the ambient manifold. 

\subsection{Closed totally geodesic submanifolds are rare}
One easily constructs by arithmetic means hyperbolic manifolds with infinitely many closed totally geodesic submanifolds, however this situation is exceptional and we have, as a special case of a beautiful recent theorem by \textcite{Margulis:2020wp} -- generalized in \textcite{Bader:2019wu}:

\begin{theo}
	If  a closed hyperbolic 3-manifold $M$ contains infinitely many closed totally geodesic subspaces of dimension at least  $2$, then $M$ is arithmetic.
\end{theo}
Thus, having infinitely many closed totally geodesic submanifolds is quite rare for hyperbolic $3$-manifolds, and an asymptotic counting as we defined for geodesics does not yield interesting results in general -- see however \textcite{Jung:2019tq} for a result in the arithmetic case and a general upper bound in \textcite[Corollary 1.12]{Mohammadi:2020wi}.

\subsection{The set of pointed totally geodesic spaces and Shah's Theorem}\label{par:Ratner}

Let $(M,h)$ be an oriented  Riemannian $3$-manifold and $G(M)$ be the bundle over $M$ whose fiber at a point $x$ is the set of oriented $2$-planes in the tangent space at $x$. Every surface $S$ in $M$ then has a {\em Gauß lift} $G(S)$ in $G(M)$ which consists of the set of tangent spaces to~$S$.

\subsubsection{Totally geodesic hyperbolic planes, the frame bundle and the $\psld$-action}

 When $h$ is hyperbolic, the space $G(M)$ has a natural foliation $\mathcal F$ whose leaves are Gauß lifts of immersed totally geodesic hyperbolic planes. We can thus interpret $G(M)$ as the space of pointed totally geodesic planes, or equivalently as the set of (local) totally geodesic embeddings of $\hh$ into $M$, equipped with the right action by precomposition of~$\psld$.

Let  $F_{h_0}(M)$ be the {\em frame bundle} over $M$ whose fiber at $x$ is the set of oriented orthonormal frames in the tangent space of $x$. We have a natural fibration
$$
S^1\to F_{h_0}(M)\to G(M)\ .
$$
The choice of a frame at a point $x$ in $\hhh$ identifies $\iso(\hhh)$, the group of orientation preserving isometries of $\hhh$, with  $\pslc$. Thus $F_{h_0}(\hhh)$ is interpreted as the space of isomorphisms of  $\iso(\hhh)$ with $\pslc$ and as such carries commuting actions of $\iso(\hhh)$ on the left by postcomposition and $\pslc$ on the right by precomposition. Then the foliation  of $F_{h_0}(\hhh)$ by the orbits of the right action of the subgroup $\psld$ of $\pslc$,  projects to the foliation $\mathcal F$ of $G(\hhh)$ that we just described, with the corresponding action of $\psld$.  

\subsubsection{Shah and Ratner's Theorem}
From this interpretation of the $\psld$-action on $G(M)$, a theorem by \textcite{Shah:1991wr}, which is now also  a consequence of the celebrated theorem by \textcite{Ratner:1991wl}, gives:

\begin{theo}\label{theo:Ratner}
Let $(M,h_0)$ be a closed hyperbolic $3$-manifold. Then any  orbit of the $\psld$-action on $F_{h_0}(M)$ is either dense or closed. Moreover any closed  set invariant by $\psld$ in $F_{h_0}(M)$ is either everything or a finite union of closed orbits. 
\end{theo}

Recent work by \textcite[Theorem 11.1]
{McMullen:2017ub} give results in the non closed case, see also~\textcite{Tholozan:2019uw}. 

We state this result as part of our promenade in the subject and will not use it in the proof, as opposed to \textcite{Calegari:2020uo}. However, we will use as special case of the measure classification theorem of \textcite{Ratner:1991tu}.

\begin{theo}\label{theo:Ratner2}
Let $(M,h_0)$ be a closed hyperbolic $3$-manifold. Then any  ergodic $\sld$-invariant measure on $F_{h_0}(M)$ is $\mu_{Leb}$ or is supported on a closed leaf in $F_{h_0}(M)$.
\end{theo}
In particular, there are only countably many ergodic $\sld$-invariant measures.  For a short and accessible proof of Ratner's theorem in the context of $\sld$-action see \textcite{Einsiedler:2006wz}.

We only need the following corollary that may have a direct proof.
\begin{coro}\label{coro:Ratner3}
	Let $\mu$ be an $\sld$-invariant probability measure on $F_{h_0}(M)$. Let~$p$ be the projection of $F_{h_0}(M)$ to $M$. Assume that $p_*\mu$ is \textup{(}up to a constant\textup{)} the volume form on $M$, then $\mu=\mu_{Leb}$. 
\end{coro}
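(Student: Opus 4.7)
The plan is to apply the ergodic decomposition theorem together with Ratner's measure classification (Theorem~\ref{theo:Ratner2}) and then use the elementary fact that a $2$-dimensional immersed submanifold of the $3$-manifold $M$ has zero volume.

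First, I would invoke the ergodic decomposition for the $\sld$-invariant probability measure $\mu$ on $F_{h_0}(M)$: there is a probability measure $\nu$ on the space of $\sld$-invariant ergodic probability measures such that
$$\mu = \int \lambda \, {\rm d}\nu(\lambda).$$
By Theorem~\ref{theo:Ratner2}, each such $\lambda$ is either $\mu_{Leb}$ or is supported on a closed $\sld$-orbit in $F_{h_0}(M)$. The remark following Theorem~\ref{theo:Ratner2} guarantees that the set of ergodic components is countable, so we may write
$$\mu = \alpha\,\mu_{Leb} + \sum_{i\in I} \alpha_i\,\mu_i,$$
where $I$ is countable, $\alpha + \sum_i \alpha_i = 1$, all coefficients are non-negative, and each $\mu_i$ is supported on a closed $\sld$-orbit $\mathcal O_i\subset F_{h_0}(M)$.

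Second, I would push forward by $p$ and examine the support of each piece. The measure $p_*\mu_{Leb}$ is (up to a positive constant) the Riemannian volume form on $M$. For each $i$, the closed orbit $\mathcal O_i$ projects, via the fibration $F_{h_0}(M)\to G(M)$ described in section~\ref{par:Ratner}, to a closed leaf of the foliation $\mathcal F$ on $G(M)$, and such a closed leaf is precisely the Gauß lift of an immersed closed totally geodesic hyperbolic surface $S_i\subset M$. Hence $p_*\mu_i$ is supported on $S_i$, which, being an immersion of a $2$-dimensional manifold into the $3$-manifold $M$, has zero Lebesgue measure.

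Finally I would combine these observations. The hypothesis that $p_*\mu$ is proportional to the volume form means that $p_*\mu$ is absolutely continuous with full support. Since
$$p_*\mu = \alpha\, p_*\mu_{Leb} + \sum_{i\in I}\alpha_i\, p_*\mu_i$$
and the second sum is carried by the null set $\bigcup_i S_i$, taking the absolutely continuous part forces $\alpha_i = 0$ for all $i$, hence $\alpha = 1$ and $\mu = \mu_{Leb}$. The only genuine input beyond bookkeeping is the identification of closed $\sld$-orbits with totally geodesic surfaces, which is the main thing to check carefully; once that is in hand, the argument is a one-line consequence of dimension comparison.
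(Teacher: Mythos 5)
Your proof is correct, and it is exactly the derivation that the paper leaves implicit: the corollary is stated immediately after Theorem~\ref{theo:Ratner2} with no argument given (only the remark that it ``may have a direct proof,'' presumably meaning one avoiding Ratner's theorem entirely), and the intended route is the one you take --- ergodic decomposition, Ratner's classification of the ergodic components into $\mu_{Leb}$ plus countably many measures carried by closed $\sld$-orbits, and the observation that the latter push forward under $p$ to measures supported on $2$-dimensional immersed totally geodesic surfaces, hence singular with respect to the volume of the $3$-manifold, so that absolute continuity of $p_*\mu$ kills them by uniqueness of the Lebesgue decomposition.
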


\subsection{In variable curvature} For a generic metric,  there are no totally geodesic surfaces, not even locally. Thus, even non closed totally geodesic surfaces are rare in variable curvature. We actually explain now a more precise result of \textcite{Calegari:2020uo} which implies that if a negatively curved manifold has too many  hyperbolic planes, then it is hyperbolic.

We start by discussing briefly the boundary at infinity of negatively curved manifolds.
\subsubsection{The boundary at infinity and circles}\label{par:circ}

In the Poincaré ball model, $\hhh$ is the interior of a ball. The boundary of the ball is denoted $\partial_\infty \hhh$ and carries an action of the isometry group $\iso(\hhh)$ of $\hhh$, which is isomorphic to $\pslc$. Under this isomorphism, $\partial_\infty\hhh$ identifies as a homogeneous space with  $\mathbf{CP}^1$.

Observe that the choice of real plane $P$ in $\mathbb C^2$ defines a {\em circle} in $\mathbf{CP}^1$ which is the set of complex lines intersecting non trivially the plane $P$. In the ball model of $\hhh$, these circles are boundaries of hyperbolic planes totally geodesically embedded in $\hhh$ .

The boundary at infinity $\partial_\infty\hhh$ has an intrinsic definition as the set of equivalence classes of oriented geodesics which are {\em parallel at infinity}, where the equivalence is defined as
$$
\gamma_1\sim\gamma_2\ \ \hbox{ if and only }\ \ \ \limsup_{t\to\infty}d(\gamma_1(t),\gamma_2(t))<\infty\ .$$
This notion also makes sense in the case of a nonpositively curved  simply connected manifold $\tilde M$ and allow us to define the notion of  
the boundary at infinity $\partial_\infty\tilde M$.

We recall here briefly that  $\tilde M\sqcup\partial_\infty\tilde M$ admits a topology and becomes so a compactification of $\tilde M$ as a closed ball. Moreover, if $\tilde M$ is the universal cover of a closed manifold that admits a hyperbolic metric, we have an identification of $\partial_\infty\tilde M$ with $\partial_\infty\hhh$.

If $M$ has dimension at least  3, by Mostow rigidity, the above identification of $\partial_\infty\tilde M$ with $\mathbf{CP}^1$ is unique up to the action of $\pslc$ and thus circles make  perfect sense in~$\partial_\infty \tilde M$.

\subsubsection{A characterization of hyperbolic metrics}

\begin{prop}\label{pro:manyhyp}
	Assume that the curvature of the closed  $3$-manifold $M$ is less than $-1$. Assume that every circle at infinity bounds  a totally geodesic hyperbolic surface. Then $M$ is hyperbolic.
\end{prop}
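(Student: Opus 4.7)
The plan is to show that the ambient sectional curvature $K_{\tilde M}(V)$ equals $-1$ on every $2$-plane $V\subset T\tilde M$; combined with the upper bound $K_{\tilde M}\leq -1$ supplied by the hypothesis, this forces $(M,h)$ to have constant sectional curvature $-1$, hence be hyperbolic. The key tool is the Gauss equation: for a totally geodesic surface $S\subset\tilde M$ through $p$ with $V=T_pS$, the vanishing of the second fundamental form yields $K_S(p)=K_{\tilde M}(V)$. Since every circle $C$ bounds a surface $S_C$ with $K_{S_C}\equiv -1$, we have $K_{\tilde M}(V)=-1$ whenever $V$ is tangent to some $S_C$, and it suffices to show every $(p,V)\in G(\tilde M)$ arises this way.

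Fix $(p,V)\in G(\tilde M)$ and pick a unit vector $v\in V$. Let $\gamma_v$ be the complete geodesic of $\tilde M$ through $p$ with $\dot\gamma_v(0)=v$ and let $a,b\in\partial_\infty\tilde M$ be its two endpoints. The set
$$L_{a,b}\defeq\{C\in\mathcal C\mid\{a,b\}\subset C\}$$
of circles in $\partial_\infty\tilde M$ through both $a$ and $b$ is a $1$-parameter family homeomorphic to $S^1$. For each $C\in L_{a,b}$, the surface $S_C$ has $\{a,b\}\subset\partial_\infty S_C$; since the bi-infinite geodesic joining two points at infinity of a simply connected negatively curved manifold is unique, $\gamma_v$ lies in $S_C$, so $p\in S_C$ and $v\in T_pS_C$. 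Consider the map
$$\Phi\colon L_{a,b}\longrightarrow\bigl\{W\subset T_p\tilde M\colon W\text{ is a $2$-plane containing }v\bigr\},\qquad C\longmapsto T_pS_C.$$
Both source and target are circles, and $\Phi$ is injective: a complete totally geodesic surface through $p$ tangent to a prescribed $2$-plane $W$ is necessarily the union of the geodesics of $\tilde M$ through $p$ with initial direction in $W$, hence uniquely determined by $W$; therefore $T_pS_{C_1}=T_pS_{C_2}$ forces $S_{C_1}=S_{C_2}$ and then $C_1=\partial_\infty S_{C_1}=\partial_\infty S_{C_2}=C_2$. Continuity of $\Phi$ follows from stability of totally geodesic submanifolds under perturbation of their boundary at infinity, a standard Arzel\`a--Ascoli argument under the uniform curvature bound. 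A continuous injection $S^1\hookrightarrow S^1$ is a homeomorphism onto its image, and this image is compact, connected, and contains more than one point, so it equals $S^1$. Hence $V=T_pS_C$ for some $C\in L_{a,b}$, and $K_{\tilde M}(V)=-1$ by the Gauss equation.

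Since $(p,V)$ was arbitrary, $(M,h)$ has constant sectional curvature $-1$ and is hyperbolic. The main obstacle in this plan is the continuity of $\Phi$, which relies on compactness properties for the space of isometric totally geodesic embeddings of $\hh$ into $\tilde M$ under the uniform curvature bound; the injectivity is the classical fact that a complete totally geodesic submanifold in negative curvature is determined by its $1$-jet at any of its points.
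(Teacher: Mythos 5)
Your proof is correct and establishes the proposition, but by a genuinely different route from the paper at the crucial step. Both arguments reduce the statement to showing that \emph{every} tangent $2$-plane in $\tilde M$ is tangent to one of the postulated totally geodesic hyperbolic surfaces, after which the Gauss equation forces the ambient sectional curvature to be $\equiv -1$. The paper proves this surjectivity globally by a degree argument: from triples of points at infinity it assembles a $\pi_1(M)$-equivariant map $\beta$ from the frame bundle $F_{h_0}(\hhh)$ to the frame bundle of $(\tilde M,h)$, and asserts that $\beta$ has degree one, hence is surjective. You instead argue pointwise: fix $(p,V)$ and a unit $v\in V$, observe that each circle $C$ through the two endpoints $a,b$ of $\gamma_v$ gives a surface $S_C$ containing $\gamma_v$ (hence $p$, with $v\in T_pS_C$), and show that the map $\Phi\colon L_{a,b}\to\{\text{$2$-planes at $p$ containing }v\}$, $C\mapsto T_pS_C$, is a continuous injection between circles, hence surjective. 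Your route is more elementary and self-contained, since no mapping degree needs to be identified; the price is the continuity claim, which you correctly flag as the delicate point but state too loosely. Arzelà--Ascoli together with the identically vanishing second fundamental forms (so proposition~\ref{pro:compac} applies with nothing to check) only yields subconvergence $S_{C_{n_k}}\to S_\infty$ through $p$; one still has to identify $\partial_\infty S_\infty$ with $C=\lim C_n$. The clean way to finish is to note that for a complete totally geodesic surface $S$ through $p$, its boundary $\partial_\infty S$ is the image of the unit circle of $T_pS$ under the homeomorphism from the unit tangent sphere at $p$ to $\partial_\infty\tilde M$, so $\partial_\infty S$ depends continuously on $T_pS$; this forces $\partial_\infty S_\infty=C$, uniqueness of $S_C$ gives $S_\infty=S_C$, and compactness of the Grassmannian upgrades subconvergence to full convergence of $T_pS_{C_n}$. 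With that detail supplied, your argument is complete, and I would say it is a satisfying alternative to the degree computation the paper leaves unexplained.
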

\begin{proof}[Sketch of a proof] Since the curvature is negative, a circle at
  infinity cannot bound more than one totally geodesic surface. Let $\mathcal
  T$ be the space of 
   triples of pairwise distinct elements
  in $\bhhh$. Every such triple $\tau$ defines a unique circle $\gamma$ and thus a unique totally geodesic oriented hyperbolic plane $\mathbf H$ in $\tilde M$, the universal cover of $M$. Since $\gamma$ is the boundary at infinity of $\mathbf H$, a triple of points $\tau=(a,b,c)$ define a  frame $(x,u,v)$ where $x$ is a point in $\mathbf H$ and $(u,v)$ an orthonormal basis of  $\T_x\mathbf H$ by the following procedure: $x$ is on the geodesic joining $a$ to $c$, $u$ is the vector tangent at $x$ in the direction of $b$ and $v$ the tangent vector in the direction of $c$. Let us define
$$
\beta(\tau)=(x,u,v,n)\ ,\hbox{ such that $(u,v,n)$ is an oriented orthonormal basis of $\T_x\tilde M$}\ .
$$	
Then $\beta$ is $\pi_1(M)$-equivariant from $F_{h_0}(\hhh)$ to $F_{h}(M)$. One can show that $\beta$~has degree~$1$ and is surjective. It follows in particular that there is a totally geodesic hyperbolic plane through any tangent plane in~$M$. Thus $M$~has constant curvature~$-1$.
\end{proof}

\section{Surface subgroups in hyperbolic $3$-manifolds}\label{par:qf}
 
Let us move to topological questions. The natural question is, after  we have spent some time in the first sections studying homotopy classes of circles in negatively curved manifolds,  to understand conjugacy classes of fundamental groups of surfaces in $3$-manifolds. We will spend some time recalling classical  facts about quasi-Fuchsian surface groups, the surface subgroup theorem by Kahn and Markovi\'c, and finally asymptotic counting of those surface subgroups by the genus.  

To be explicit, a {\em surface group} is the fundamental group of a compact
connected orientable surface of genus greater than or equal to~$2$. 

Any such group can be represented as a {\em Fuchsian subgroup}, that is a discrete cocompact subgroup of  the isometry group $\psld$ of the hyperbolic plane $\hh$. 

We concentrate our discussion first on discrete surface subgroups of $\iso(\hhh)$.

\subsection{Quasi-Fuchsian groups and quasicircles} Let $S$ be  a closed
connected oriented surface $S$ of genus greater than $2$. Let $\rho_0$ be a faithful representation of the fundamental group $\pi_1(S)$ in  $\iso(\hh)$ whose image is a cocompact lattice $\Gamma$.  Seeing $\hh$ sitting as a geodesic plane in $\hhh$,  gives rise to an embedding of $\iso(\hh)$ in $\iso(\hhh)$. The corresponding morphism of $\Gamma<\iso(\hh)$ in $\iso(\hhh)$ is called a {\em Fuchsian representation} and its image a {\em Fuchsian group}\footnote{we warn the reader our definitions are slightly non standard here}.

We saw in paragraph~\ref{par:circ} that such a Fuchsian group preserves a circle in $\partial_\infty\hhh$. This motivates the following definitions.

\begin{defi} 
  \begin{enumerate}
  \item 	a {\em quasi-Fuchsian representation} is a morphism $\rho$ from a cocompact lattice $\Gamma$ of $\iso(\hh)$ in $\iso(\hhh)$, such that there exists a continuous injective map $\Lambda$ from $\partial_\infty\hh$ to  $\partial_\infty\hhh$ which is $\rho$-equivariant. 
  \item  The map $\Lambda$ is called the {\em limit map} of the quasi-Fuchsian morphism.
  \item a {\em quasi-Fuchsian  group} is the image of a quasi-Fuchsian representation.
  \item a {\em quasi-Fuchsian  manifold} is the quotient of the hyperbolic space by a quasi-Fuchsian group.
  \item The {\em limit set $\partial_\infty\Gamma$} of a \qf\ group $\Gamma$ is the image of its limit map.
  \end{enumerate}
\end{defi}
In this definition, observe that the choice of a quasi-Fuchsian representation
depends on the choice of a realization of the surface group as a lattice in $\psld$. Similarly, the limit map of a \qf\  representation depends on the choice of a lattice in $\psld$, while $\partial_\infty\Gamma$ only depends on the \qf\ group $\Gamma$.

Quasi-Fuchsian manifolds are not compact: they are homeomorphic to $S\times \mathbb R$, if the quasi-Fuchsian group is isomorphic to $\pi_1(S)$. However, they keep some cocompactness feature:

\begin{prop}
	Let $\partial_\infty\Gamma$ be the limit set of a quasi-Fuchsian surface group $\Gamma$ and $\Env(\partial_\infty\Gamma)$ be the convex hull  \textup{(}for instance in the projective Klein model\textup{)} of $\partial_\infty\Gamma$ in $\hhh$. Then
	\begin{enumerate}
		\item  the distance to $\Env(\partial_\infty\Gamma)$ is convex\footnote{A function is {\em convex} its restriction to any geodesic is convex. In nonpositive curvature, any distance to a convex set is convex},
		\item the group $\Gamma$ acts cocompactly on $\Env(\partial_\infty\Gamma)$.
\end{enumerate} 	
	 The quotient  $\Env(\partial_\infty\Gamma)/\Gamma$ is called the {\em convex core} of the quasi-Fuchsian manifold associated to $\Gamma$. 
\end{prop}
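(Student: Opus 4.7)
Since $\hhh$ is simply connected with nonpositive curvature, it is a CAT$(0)$ space, and the distance to any convex subset of a CAT$(0)$ space is convex along geodesics. To see this concretely, take $x, y \in \hhh$ with nearest-point projections $p_x, p_y \in \Env(\partial_\infty\Gamma)$; the geodesic $[p_x, p_y]$ lies in $\Env(\partial_\infty\Gamma)$ by convexity, and the CAT$(0)$ inequality (monotonicity of distances between affinely parameterised geodesics) gives
$$
d(\gamma_{xy}(t), \gamma_{p_x p_y}(t)) \leq (1-t)\, d(x, p_x) + t\, d(y, p_y).
$$
The left-hand side bounds $d(\gamma_{xy}(t), \Env(\partial_\infty\Gamma))$ from above, while the right-hand side equals $(1-t)\,d(x, \Env(\partial_\infty\Gamma)) + t\, d(y, \Env(\partial_\infty\Gamma))$, proving convexity along $\gamma_{xy}$.

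\textbf{Part (ii), setup.} Identify $\Gamma$, via the quasi-Fuchsian representation $\rho$, with a cocompact Fuchsian lattice $\Gamma_0 < \psld$ acting on $\hh$; the limit map $\Lambda : \partial_\infty\hh \to \partial_\infty\Gamma$ is a $\rho$-equivariant homeomorphism (being a continuous injection between compact Hausdorff spaces). The plan is to build a $\Gamma$-invariant, $\Gamma$-cocompact subset $K \subset \Env(\partial_\infty\Gamma)$ and then argue that $\Env(\partial_\infty\Gamma)$ lies in a uniformly bounded neighbourhood of $K$. Concretely, parameterise the space $\mathcal T$ of ordered triples of pairwise distinct points in $\partial_\infty\hh$ by $\psld$ (an ordered triple determines a unique Möbius transformation), so that $\Gamma_0$ acts on $\mathcal T$ properly discontinuously and cocompactly; transporting through $\Lambda$, the same holds for $\Gamma$ acting on distinct triples of $\partial_\infty\Gamma$. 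The $\iso(\hhh)$-equivariant ``centre'' map sending a triple of distinct points of $\partial_\infty\hhh$ to, say, the orthocentre of the ideal triangle it bounds yields a continuous $\rho$-equivariant map $\tilde\beta : \mathcal T \to \hhh$; its image $K$ lies in $\Env(\partial_\infty\Gamma)$ (centres lie in convex hulls), is $\Gamma$-invariant, and satisfies $K/\Gamma$ compact.

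\textbf{The main obstacle.} The remaining step---that $\Env(\partial_\infty\Gamma)$ sits within a uniformly bounded neighbourhood of $K$, from which cocompactness on $\Env(\partial_\infty\Gamma)$ follows because $\overline{N_R(K)}/\Gamma$ is compact and $\Env(\partial_\infty\Gamma)/\Gamma$ is a closed subset---expresses that the convex hull of the quasi-circle $\partial_\infty\Gamma$ has bounded depth transverse to the ``spanning'' family of ideal triangles. This is genuinely nontrivial and closely related to the quasi-isometric-embedding property of quasi-Fuchsian surface groups, itself a manifestation of the Morse-type lemma recalled later in these notes as Theorem~\ref{coro:KK}. An alternative route avoids this depth estimate by using the nearest-point retraction $r : \partial_\infty\hhh \setminus \partial_\infty\Gamma \to \partial\Env(\partial_\infty\Gamma)$ from the domain of discontinuity $\Omega$: $\Omega/\Gamma$ is compact (it is the disjoint union of two copies of $S$), hence $\partial\Env(\partial_\infty\Gamma)/\Gamma$ is compact, and an elementary convexity argument then controls the interior depth of $\Env(\partial_\infty\Gamma)$.
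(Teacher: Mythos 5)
The paper records this proposition as standard background and gives no proof, so I am evaluating your argument on its own terms rather than against the paper's. Part~(i) is correct. For part~(ii) your skeleton is the right one — transport the cocompact action of the Fuchsian model on triples of boundary points through the limit map $\Lambda$, then push forward by an $\iso(\hhh)$-equivariant centre map to obtain a $\Gamma$-cocompact subset $K\subset\Env(\partial_\infty\Gamma)$ — but both routes you sketch leave the essential content of the statement unproved.

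The depth estimate $\Env(\partial_\infty\Gamma)\subset N_R(K)$ is less fearsome than you suggest and does not require the quasi-isometry machinery of Theorem~\ref{coro:KK}. Because geodesic triangles in $\hhh$ are uniformly thin, the union $J$ of geodesics with both endpoints in $\partial_\infty\Gamma$ is uniformly quasi-convex, whence $\Env(\partial_\infty\Gamma)\subset N_{\delta}(J)$ for a universal $\delta$. Given $x\in\Env(\partial_\infty\Gamma)$ within $\delta$ of such a geodesic $g$ with endpoints $a,b$, let $c$ run along one arc of $\partial_\infty\Gamma\setminus\{a,b\}$: the centre of the ideal triangle spanned by $(a,b,c)$ tends to $a$ as $c\to a$ and to $b$ as $c\to b$, so by continuity of $\Lambda$ and the intermediate value theorem its orthogonal projection onto $g$ sweeps out all of $g$; choosing $c$ so that this projection coincides with the foot of $x$ on $g$ places a point of $K$ within universal distance of $x$. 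In your alternative route, the parenthetical assertion that ``$\Omega/\Gamma$ is compact, a disjoint union of two copies of $S$'' is precisely the content that needs proving, not a given: one must first rule out parabolics (a parabolic has a single fixed point at infinity, while $\rho$-equivariance and injectivity of $\Lambda$ force two distinct fixed points for every nontrivial element), and then observe that each component of $\Omega$ is simply connected by the Jordan curve theorem, carries a free properly discontinuous $\Gamma$-action, and hence quotients to a surface with fundamental group $\pi_1(S)$; since the fundamental group of every non-compact surface is free and $\pi_1(S)$ is not, the quotient is closed. Finally, your concluding claim that ``an elementary convexity argument controls the interior depth'' is not correct as stated: convexity alone gives no bound on the distance from the interior of a convex set to its boundary (a large metric ball is a counterexample); one needs again the quasi-convexity of $J$ or the barycentre argument above.
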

Quasi-Fuchsian groups are plentiful, and in particular 

\begin{prop}
	Any small deformation of a Fuchsian group is quasi-Fuchsian.
\end{prop}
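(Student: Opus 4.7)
The strategy is to show that any representation $\rho$ close to the Fuchsian representation $\rho_0\colon\Gamma\to\iso(\hh)\subset\iso(\hhh)$ remains convex cocompact in $\iso(\hhh)$, and then to extract the limit map from its orbit map. I would fix a finite symmetric generating set $S$ of $\Gamma$ and a base point $x_0$ on the totally geodesic plane $\Pi\subset\hhh$ preserved by $\rho_0$. Since $\rho_0$ acts cocompactly on $\Pi$, the orbit map $\gamma\mapsto\rho_0(\gamma)x_0$ is a $(K_0,C_0)$-quasi-isometric embedding of the Cayley graph of $\Gamma$ into $\hhh$, with constants depending only on $\rho_0$ and $S$.

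The core step is to upgrade the pointwise closeness of $\rho$ and $\rho_0$ on the finite set $S$ into a quasi-isometric embedding property for the whole orbit map $\gamma\mapsto\rho(\gamma)x_0$. This uses the Morse stability of quasi-geodesics in the Gromov hyperbolic space $\hhh$: a path that is locally a $(K,C)$-quasi-geodesic with sufficiently good local parameters is globally a quasi-geodesic with controlled constants. If $\rho$ is close enough to $\rho_0$ on $S$, then along any geodesic word $(\gamma_n)$ in the Cayley graph of $\Gamma$ the sequence $(\rho(\gamma_n)x_0)$ satisfies these local bounds, hence remains a genuine quasi-geodesic in $\hhh$ with constants close to $(K_0,C_0)$.

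Once the orbit map of $\rho$ is known to be a quasi-isometric embedding, the limit map $\Lambda$ is produced in the standard way. Any point $\xi\in\partial_\infty\hh$ corresponds, under $\rho_0$, to a geodesic ray $(\gamma_n)$ in the Cayley graph of $\Gamma$; its $\rho$-image $(\rho(\gamma_n)x_0)$ is a quasi-geodesic ray in $\hhh$, and so by the Morse lemma fellow-travels a unique genuine geodesic ray, whose endpoint in $\bhhh$ is $\Lambda(\xi)$. The map $\Lambda$ is $\rho$-equivariant by construction; continuity and injectivity follow from the standard identification of the Gromov boundary of a hyperbolic space with equivalence classes of quasi-geodesic rays under fellow-travelling. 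Since $\Lambda$ is an injective continuous map between two circles, $\rho$ is \qf.

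The main obstacle is that closeness of $\rho(s)$ and $\rho_0(s)$ for generators $s\in S$ does not directly give closeness of $\rho(\gamma)$ and $\rho_0(\gamma)$ for $\gamma$ of large word length; these quantities may grow unboundedly. What saves the day is the Gromov hyperbolicity of $\hhh$, which through the Morse lemma converts local quasi-geodesic behavior (controlled by the finite generating set $S$) into global quasi-geodesic behavior. This is the rank one incarnation of the general openness of convex cocompactness (or of the Anosov property) under deformations, in the same spirit as the Morse-type argument underlying Theorem~\ref{coro:KK} of the introduction.
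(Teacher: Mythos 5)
The paper states this proposition without proof, treating it as a classical fact (it is a special case of the openness of convex cocompactness, going back to Marden, Thurston, and Sullivan). Your argument is correct and is indeed the standard proof via the local-to-global principle for quasi-geodesics in a Gromov hyperbolic space: the Fuchsian orbit map is a quasi-isometric embedding, closeness on a finite generating set controls the orbit map on any bounded scale $L$, and choosing $L$ large enough (depending on the Fuchsian constants and on $\delta$) lets you promote the local quasi-geodesic estimate to a global one, from which discreteness, faithfulness, and the equivariant boundary map all follow by the usual Gromov boundary machinery. One point worth making explicit for a careful write-up: the local-to-global theorem, for a $\delta$-hyperbolic space and any target constants $(K,C)$, produces a scale $L=L(K,C,\delta)$ such that every $L$-local $(K,C)$-quasi-geodesic is a global quasi-geodesic; you must first fix $(K,C)$ slightly worse than the Fuchsian constants, obtain $L$ from this, and only then choose how close $\rho$ must be to $\rho_0$ on the generators so that words of length up to $L$ stay controlled. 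Stating the quantifiers in that order removes the apparent circularity in ``if $\rho$ is close enough on $S$, then the local bounds hold.''
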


The limit map of a quasi-Fuchsian group has many remarkable properties:

\begin{prop}
		Given the limit map $\Lambda$ of a quasi-Fuchsian group, there exists a constant $K$, such that for any quadruple of pairwise distinct points $(x,y,z,t)$ 
	in $\partial_\infty\hh\simeq\bf{RP}^1$, then
\begin{equation}
		\bigl\vert [\Lambda(x),\Lambda(y),\Lambda(z),\Lambda(w)]\bigr\vert\leq K	\bigl\vert [x,y,z,w]\bigr\vert\ , \label{def:Kquasi}
\end{equation}
where $[a,b,c,d]$ denotes the cross-ratio of the quadruple  $(a,b,c,d)$ in either~$\bf{RP}^1$ or~$\bf{CP}^1$.
\end{prop}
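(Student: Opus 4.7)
The plan is to exploit the cocompact action of $\Gamma$ on $\Env(\partial_\infty\Gamma)$ given by the preceding proposition and reduce the statement to the general principle that a quasi-isometric embedding between $\mathrm{CAT}(-1)$ spaces extends to a quasi-M\"obius map on the ideal boundaries.

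First, I would fix a Fuchsian realization $\rho_0\colon\Gamma\hookrightarrow\iso(\hh)$ and construct a $\rho$-equivariant continuous map $f\colon\hh\to\Env(\partial_\infty\Gamma)\subset\hhh$ by extending equivariantly a continuous map defined on a compact fundamental domain of $\Gamma$ acting on $\hh$. Since $\Gamma$ acts cocompactly on both $\hh$ and $\Env(\partial_\infty\Gamma)$, the standard Milnor--\v Svarc argument shows that $f$ is a $(\lambda,c)$-quasi-isometric embedding for some constants $\lambda\geq 1$, $c\geq 0$.

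Second, because $\hh$ and $\hhh$ are both $\mathrm{CAT}(-1)$, the Morse lemma for quasi-geodesics implies that $f$ extends continuously to a map $\partial f\colon\partial_\infty\hh\to\partial_\infty\hhh$. This extension is $\rho$-equivariant and sends the pair of fixed points of any hyperbolic $\gamma\in\Gamma$ to the corresponding pair for $\rho(\gamma)$; by density of such pairs in $\partial_\infty\hh$ and continuity, one obtains $\partial f=\Lambda$.

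Third, to obtain the cross-ratio inequality, I would pass through Gromov products at a basepoint $o\in\hh$. A standard formula in a $\mathrm{CAT}(-1)$ space expresses the modulus of a boundary cross-ratio, with respect to the visual metric based at $o$, as
$$
|[a,b,c,d]|\asymp\exp\bigl((a|c)_o+(b|d)_o-(a|d)_o-(b|c)_o\bigr),
$$
up to a uniform multiplicative constant. Since $f$ is a quasi-isometric embedding, the Gromov products at $o$ are coarsely preserved by $\partial f$, that is, change by at most a uniform additive error. Summing the four terms above yields \eqref{def:Kquasi} with an explicit constant $K$ depending only on $\lambda$, $c$ and the hyperbolicity constants.

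The main obstacle is the last step: the passage from additive control on Gromov products to a multiplicative bound on cross-ratios, which requires the identification between boundary cross-ratios and Busemann--Gromov functions on a $\mathrm{CAT}(-1)$ space, due essentially to Bourdon. Everything else is a fairly direct application of geometric group theory once one has the cocompactness from the preceding proposition.
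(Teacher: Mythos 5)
The paper offers no proof of this proposition; it is stated as classical background (and serves to introduce the paper's own shorthand for $K$-quasisymmetry), so there is no argument in the text to compare yours against. Your route --- cocompactness of $\Gamma$ on $\Env(\partial_\infty\Gamma)$, Milnor--\v{S}varc to produce a $\rho$-equivariant quasi-isometric embedding $f\colon\hh\to\hhh$, boundary extension, and Bourdon's dictionary between cross-ratios and Gromov products --- is the natural geometric-group-theoretic approach, and your first two steps are sound.

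The third step contains a genuine error. A $(\lambda,c)$-quasi-isometric embedding does not preserve Gromov products up to additive error alone; it distorts them multiplicatively, $\tfrac{1}{\lambda}(a\mid b)_o-C\leq (f(a)\mid f(b))_{f(o)}\leq\lambda(a\mid b)_o+C$. Feeding this into Bourdon's identity yields a power-law, not a linear, bound:
$$
C^{-1}\min\!\bigl(|[x,y,z,w]|^{1/\lambda},|[x,y,z,w]|^{\lambda}\bigr)\leq|[\Lambda(x),\Lambda(y),\Lambda(z),\Lambda(w)]|\leq C\max\!\bigl(|[x,y,z,w]|^{1/\lambda},|[x,y,z,w]|^{\lambda}\bigr).
$$
This is the standard quasi-M\"obius condition, genuinely weaker than~\eqref{def:Kquasi}: reading~\eqref{def:Kquasi} against all permutations of a quadruple yields a bi-Lipschitz control on cross-ratios, which already fails for elementary quasisymmetric circle maps (try $x\mapsto|x|^{\alpha}\operatorname{sign}(x)$, $\alpha\neq1$, on the quadruple $(0,1,t,\infty)$ with $t\to\infty$). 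Moreover, for a quasi-Fuchsian group that is not Fuchsian the orbit map really has $\lambda>1$ (otherwise the marked length spectra of $\rho_0$ and $\rho$ would be proportional, forcing the group to be Fuchsian), so the power distortion is unavoidable by this method. The classical path to the quasisymmetry of the limit map, in the usual power-law sense, is analytic rather than coarse: by Bers' simultaneous uniformization the quasi-Fuchsian representation is a quasiconformal conjugate of a Fuchsian one, and the restriction of a quasiconformal homeomorphism of $\mathbf{CP}^1$ to a circle is quasisymmetric with modulus of continuity controlled by the dilatation. Your Gromov-product route proves that $\Lambda$ is quasi-M\"obius, but it does not give the linear bound~\eqref{def:Kquasi} as literally stated.
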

More generally a map $\Lambda$ from 
$\bf{RP}^1$ to  $\bf{CP}^1$ is called {\em $K$-quasisymmetric} if it satisfies inequality~\eqref{def:Kquasi}. The image of a $K$-quasisymmetric map is called a {\em $K$-quasicircle}.
 The above proposition can be strengthened as 
 \begin{prop}
 	The limit map of any quasi-Fuchsian representation  is $K$-quasisymmetric for some $K$. If $K=1$, then the group is actually Fuchsian.
 \end{prop}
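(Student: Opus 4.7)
For the existence of $K$, the natural approach is to exploit that $\Gamma$ acts cocompactly on the convex hull $\Env(\partial_\infty\Gamma)\subset\hhh$. Fixing a Fuchsian realization of $\Gamma$ as a cocompact lattice in $\iso(\hh)$, I would build a $\Gamma$-equivariant continuous map $\phi\colon\hh\to\Env(\partial_\infty\Gamma)$ by sending a Dirichlet fundamental domain of $\hh$ continuously into $\Env(\partial_\infty\Gamma)$ and extending equivariantly. Cocompactness on both sides forces $\phi$ to be a quasi-isometric embedding between two Gromov hyperbolic spaces, and classical boundary-extension results then produce a continuous quasi-symmetric extension to the visual boundaries that, by uniqueness, must coincide with $\Lambda$. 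Inequality~\eqref{def:Kquasi} is then the translation of quasi-symmetry of the visual metrics on $\bh$ and $\bhhh$ into the language of cross-ratios.

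For the rigidity assertion when $K=1$, the first step is to upgrade the inequality to an equality. The cross-ratio is inverted under the transposition of its last two arguments, so applying inequality~\eqref{def:Kquasi} to the permuted quadruple $(x,y,w,z)$ yields the reverse bound; thus
$$
\bigl|[\Lambda(x),\Lambda(y),\Lambda(z),\Lambda(w)]\bigr|=\bigl|[x,y,z,w]\bigr|
$$
for every quadruple of pairwise distinct points of $\bh\simeq\mathbf{RP}^1$. Post-composing $\Lambda$ by an element of $\pslc$, I may further assume $\Lambda(0)=0$, $\Lambda(1)=1$, $\Lambda(\infty)=\infty$. The three explicit cross-ratios $[0,\infty,1,x]=1/x$, $[0,1,\infty,x]=1-1/x$ and $[\infty,y,0,x]=1-x/y$ then translate the equality into, successively, $|\Lambda(x)|=|x|$, $|\Lambda(x)-1|=|x-1|$ and $|\Lambda(x)-\Lambda(y)|=|x-y|$ for all real $x,y$, so that $\Lambda$ restricts to an isometric embedding of $\mathbf R$ into $\mathbf C$ fixing both $0$ and $1$. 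The image of such an embedding is a straight line (the triangle inequality is an equality along collinear triples) that contains $0$ and $1$, hence is $\mathbf R$ itself, so $\Lambda$ is the identity on $\mathbf R\cup\{\infty\}$. Undoing the normalization, $\partial_\infty\Gamma=\Lambda(\bh)$ is a round circle $C\subset\bhhh$ that bounds a unique totally geodesic hyperbolic plane $\Pi$; since $\rho(\Gamma)$ preserves $C$ it also preserves $\Pi$, and $\rho$ factors through $\iso(\Pi)\hookrightarrow\iso(\hhh)$, which is exactly the condition of being Fuchsian.

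The first part is, to my eyes, the less delicate: its technical core is the boundary extension of the equivariant quasi-isometry $\phi$, either imported wholesale from Gromov-hyperbolic geometry or derived by hand through Efremovi\v{c}--Tihomirova type comparisons between visual distances and cross-ratios at infinity of $\hhh$. The second part is essentially elementary once the $K=1$ inequality has been promoted to an equality; the one subtle point is to verify that the three specific cross-ratios chosen really have all four entries in $\mathbf{RP}^1$ and together pin down $\Lambda(x)$ as a single point of $\mathbf C$, which is exactly what the final ``isometric line in the plane is straight'' step secures.
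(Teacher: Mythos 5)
The paper states this proposition without proof, so there is no reference argument to compare with; I judge yours on its merits. The rigidity half is correct and pleasingly elementary: transposing the last two entries inverts the cross-ratio, so $K=1$ does promote inequality~\eqref{def:Kquasi} to an equality of cross-ratio moduli; after normalizing three points by an element of $\pslc$, your three explicit cross-ratios show that $\Lambda$ restricted to $\mathbb R$ is an isometric embedding into $\mathbb C$ fixing $0$ and $1$, and equality in the triangle inequality forces its image onto the real axis, hence $\Lambda=\mathrm{id}$ on $\mathbb R\cup\{\infty\}$ and $\rho(\Gamma)$ preserves the totally geodesic plane bounded by that round circle. This bypasses the usual invocation of Sullivan's rigidity theorem and only uses the equality case of~\eqref{def:Kquasi}, so it is robust.

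For the first half, the reduction via Milnor--\v{S}varc on both sides to a $\Gamma$-equivariant quasi-isometry $\hh\to\Env(\partial_\infty\Gamma)$, followed by the boundary-extension theory for quasi-isometric embeddings of Gromov-hyperbolic spaces, is the standard route and is sound as a plan. One caution, though: that machinery yields a quasi-M\"obius boundary map, i.e.\ a bound $\bigl|[\Lambda(x),\Lambda(y),\Lambda(z),\Lambda(w)]\bigr|\leq\eta\bigl(|[x,y,z,w]|\bigr)$ for some gauge function $\eta$, whereas the paper's inequality~\eqref{def:Kquasi} demands the specific multiplicative form $\eta(t)=Kt$ for \emph{all} quadruples. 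That stronger form is not what a generic quasiconformal boundary map satisfies: for $z\mapsto z|z|^{K-1}$ and the quadruple $(0,\infty,1,x)$, the ratio of cross-ratio moduli is $|x|^{1-K}$, which is unbounded as $x\to 0$. Your sentence asserting that~\eqref{def:Kquasi} is ``the translation of quasi-symmetry of the visual metrics into the language of cross-ratios'' glosses over this; the gap is arguably inherited from the paper's somewhat loose convention (compare the standard Beurling--Ahlfors definition, which restricts to symmetric quadruples), but you should either restrict the class of quadruples or weaken the claimed conclusion to a quasi-M\"obius bound. None of this affects the rigidity argument.
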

 
 Accordingly, a surface group is {\em $K$-quasi-Fuchsian} if it admits a $K$-quasisymmetric limit map. The constant $K$ gives a feeling of  how far a \qf\ group is from being Fuchsian. 

Not all  discrete  surface groups in $\iso(\hhh)$ are quasi-Fuchsian. We shall see an example of that in the next paragraph.

\subsection{Surface subgroups in fundamental groups of closed hyperbolic $3$-manifolds}
Solving  a crucial conjecture 
of Thurston, Kahn and Markovi\'c proved that fundamental groups of closed  hyperbolic $3$-manifolds contain surface groups.  The amazing proof, in \textcite{Kahn:2009wh}, uses mixing and equidistribution of the geodesic flow and we shall have to extract further information from it.

Kahn--Markovi{\'c} surface subgroup theorem  states the existence of many surface groups which are ``more and more'' Fuchsian in some precise way

\begin{theo}{\sc[Kahn--Markovi{\'c} surface subgroup theorem]}	Let $M$ be a closed hyperbolic $3$-manifold. Let $\epsilon$ be any positive constant. Then there exists a quasi-Fuchsian subgroup in $\pi_1(M)$ whose limit map is a $(1+\epsilon)$-quasicircle. 
\end{theo}
This result was explained in a Bourbaki exposé by \textcite{Bergeron2013}. The quantitative  part of the result  plays a crucial role in the proof  of Agol's Virtual Haken Theorem by \textcite{Agol:2013ts}  stating that any hyperbolic $3$-manifold has a finite covering which is a surface bundle over the circle. Quite interestingly, in those manifolds fibering over the circle the fundamental group of the fiber is not quasi-Fuchsian.

To add a little perspective that will come up later, recall that surface groups and fundamental groups of hyperbolic manifolds are prototypes of {\em Gromov-hyperbolic groups}. Gromov has broadened Thurston's conjecture in the following question.

\begin{question}
 Does any  one-ended Gromov-hyperbolic group contain a surface group?
\end{question}

\subsection{Counting surface subgroups}

A classical theorem in geometric group theory says 

\begin{theo}\label{theo:finitely}
	A Gromov-hyperbolic group contains only finitely many conjugacy classes of surface groups of a given genus.
\end{theo}
This result is suggested  in \textcite{Gromov:1987tk},  a proof and a generalization is given in \textcite{Delzant:1995un}. 

The special case of the fundamental group of a hyperbolic $3$-manifold is due to \textcite{Thurston:1997ux} and his beautifully simple proof works in general for fundamental groups of negatively curved manifolds. We prove it in  theorem~\ref{theo:thurs} after  our discussion of minimal surfaces.

For a hyperbolic manifold $M$, let $S(M,g)$ be the number of conjugacy classes of surface subgroups in $\pi_1(M)$ of genus $g$. Thurston already gave a crude estimate of an upper  bound for $S(M,g)$, later on improved by \textcite{Masters:2005vi} and \textcite{Soma:1991tr}. A crucial improvement of this count is made in \textcite{Kahn:2010uo}. 
\begin{theo}\label{theo:KMcount}
	Let $M$ be  a hyperbolic $3$-manifold, then there exist constants~$c_1$ and~$c_2$ so that for~$g$ large enough
	\begin{equation}
		(c_1 g)^{2g}\leq S(M,g)\leq (c_2g)^{2g}\ ,
	\end{equation}
	where $c_2$ only depends on the injectivity radius of $M$.
\end{theo}
The previous upper bound by Masters was of the form  $g^{c_2g}$. To get the lower bound,  it is actually  enough ---and distressing--- to have the existence of one surface group, and counts its covers ---see proposition~\ref{pro:puchta}. However, Kahn and Markovi\'c also have the same estimates in the harder case when one counts commensurability classes.

We deduce,
\begin{coro}\label{cor:KMcount}
\begin{equation}
	\lim_{g\to\infty}\frac{\log(S(M,g))}{2g\log(g)}=1\ .
\end{equation}
\end{coro}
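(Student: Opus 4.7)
The plan is simply to take logarithms in the double inequality provided by Theorem~\ref{theo:KMcount} and invoke a squeeze argument; no new ideas beyond that result are needed, so I expect no real obstacle.

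More precisely, for $g$ large enough we have
$$
(c_1 g)^{2g}\leq S(M,g)\leq (c_2 g)^{2g}\ ,
$$
with $c_1,c_2$ positive constants. Taking the natural logarithm (which is monotone) yields
$$
2g\bigl(\log c_1+\log g\bigr)\leq \log\bigl(S(M,g)\bigr)\leq 2g\bigl(\log c_2+\log g\bigr)\ ,
$$
and for $g$ large enough the lower bound is positive, so we may divide by $2g\log g>0$ to obtain
$$
1+\frac{\log c_1}{\log g}\leq \frac{\log\bigl(S(M,g)\bigr)}{2g\log g}\leq 1+\frac{\log c_2}{\log g}\ .
$$

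Since $c_1$ and $c_2$ do not depend on $g$, both $\log c_i/\log g$ tend to $0$ as $g\to\infty$, and the squeeze principle gives
$$
\lim_{g\to\infty}\frac{\log\bigl(S(M,g)\bigr)}{2g\log g}=1\ ,
$$
which is the stated identity. The only mild subtlety is making sure $g$ is large enough that both $c_1 g$ and $c_2 g$ exceed $1$ so that the logarithms in the bounds are meaningful and of the expected sign, but this is automatic for $g$ sufficiently large.
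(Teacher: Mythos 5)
Your argument is correct and is precisely the squeeze-the-logarithms deduction the paper intends when it writes ``We deduce'' after Theorem~\ref{theo:KMcount}; the paper gives no further detail because the step is immediate. Nothing to add.
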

 Kahn and Markovi\'c conjecture a more precise asymptotic\footnote{also for commensurability classes}:
\begin{conj}
	Let $M$ be a hyperbolic $3$-manifold, then there exists a constant $c(M)$ only depending on $M$, so that
	$$
	\lim_{g\to\infty}\frac{1}{g}\left(S(M,g)\right)^{\frac{1}{2g}}=c(M)\ .
	$$
\end{conj}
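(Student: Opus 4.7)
The plan is to upgrade the two-sided bounds of Theorem~\ref{theo:KMcount} into an actual asymptotic by combining the equidistribution theorem (Theorem~\ref{theo:equi}) with a precise dictionary between genus and area for the minimal-surface representatives of surface subgroups. First I would fix a representative: by Schoen--Sacks--Uhlenbeck, each conjugacy class of a quasi-Fuchsian surface subgroup $\Gamma < \pi_1(M)$ of genus $g$ is realized by a stable minimal immersion $f\colon S_g\to M$. Since the curvature of $M$ is $-1$, Gauss--Bonnet together with the stability inequality gives $\Area(f(S_g))\geq 4\pi(g-1)$, with equality only for totally geodesic $f$. Using the Calegari--Marques--Neves rigidity for nearly hyperbolic $K$-quasicircles together with the Seppi estimates underlying Theorem~\ref{coro:KK}, I would show that as $g\to\infty$ the relevant Kahn--Markovi\'c-type surfaces have area $4\pi(g-1)+O(1)$, so that counting by genus and counting by area, restricted to a thin shell $[4\pi(g-1),4\pi(g-1)+T(g)]$ with $T(g)=o(g)$, coincide up to lower-order terms.

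Second, I would transfer the counting problem to the phase space of stable minimal surfaces, where the $\sld$-action provides the analogue of the geodesic flow. The conjectural constant $c(M)$ would be identified with $\exp(E(M,h_0)/2)$ times a combinatorial factor coming from the stratification by pants decompositions. More precisely, I would refine the Kahn--Markovi\'c pants-gluing construction --- in the form given in \textcite{Kahn:2018wx} and used to establish Theorem~\ref{theo:equi} --- so that the main term in the count of ``good assemblies'' of $2g-2$ pairs of pants becomes $(c(M)\cdot g)^{2g}$, by a careful Stirling-type analysis of the number of ways to glue pants of a given combinatorial complexity, weighted by the laminar current on the phase space.

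Third, I would attempt to prove the existence of the limit itself before attempting to identify $c(M)$. The natural approach is subadditivity: concatenating a genus-$g_1$ assembly with a genus-$g_2$ assembly along a common geodesic of bounded length yields a genus-$(g_1+g_2-1)$ surface, giving an inequality of the form $\log S(M,g_1)+\log S(M,g_2)\leq \log S(M,g_1+g_2-1)+O(g_1+g_2)$ up to careful bookkeeping, from which Fekete's lemma applied to $\log(S(M,g)/g^{2g})$ would yield existence of the limit.

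The main obstacle is precisely the constant. The upper and lower bounds of Theorem~\ref{theo:KMcount} come from genuinely different constructions --- the upper bound from a Thurston-style pigeonhole on tangent planes and the lower bound from Kahn--Markovi\'c assembly of pants --- and matching their constants requires an \emph{effective} equidistribution of good pants on the phase space with explicit error term, analogous to an effective mixing rate for the $\sld$-action. Even granting such effective equidistribution, the combinatorial entropy of pants gluings is not obviously governed by a single geometric invariant of $M$: one needs a renewal-theoretic argument showing that the statistics of how distinct pants are glued converges. For this reason the conjecture remains open, and the plan above, while plausible, is speculative --- I would in fact expect that a full proof requires an Eskin--Mirzakhani-style thermodynamic formalism on the phase space of stable minimal surfaces that has not yet been developed in this non-abelian setting.
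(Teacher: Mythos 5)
This statement is presented in the paper purely as an open \emph{Conjecture} of Kahn and Markovi\'c; the paper gives no proof, and you yourself correctly note in your final paragraph that the conjecture remains open and your plan is speculative. So there is no ``paper's own proof'' to compare against. What you have written is a research program, not a proof, and you are candid about that. A couple of substantive objections to the program are nevertheless worth recording.

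First, the ``thin shell'' step conflates two very different counts. $S(M,g)$ counts \emph{all} conjugacy classes of genus-$g$ surface subgroups, whereas the estimate $\Area=4\pi(g-1)+O(1)$ is available only for the nearly Fuchsian surfaces produced by the good-pants machinery (or more generally for $(1+\epsilon)$-quasi-Fuchsian ones, via Theorem~\ref{theo:Seppi}); a generic surface subgroup of genus $g$ may be wildly non-Fuchsian, not covered by Theorem~\ref{coro:KK}, and have much larger minimal area. Restricting to the shell throws away an unknown and possibly dominant part of the count, so the genus-vs-area dictionary does not close. Second, identifying $c(M)$ with $\exp(E(M,h_0)/2)$ times a combinatorial factor is essentially circular: by Theorem~\ref{theo:main} one has $E(M,h_0)=2$ for \emph{every} closed hyperbolic $M$, so all the $M$-dependence that the conjecture asserts must live in the unspecified ``combinatorial factor'' --- which is precisely the hard object one is trying to produce. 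Third, the Fekete-style step needs more than an inequality of the schematic form you wrote: one must exhibit an injective (or boundedly-to-one) concatenation map on conjugacy classes, and two genus-$g_1$ and genus-$g_2$ surface subgroups in $\pi_1(M)$ need not share a short geodesic along which to glue, so even establishing approximate super-additivity of $\log S(M,g)$ is far from automatic. These are exactly the kinds of issues why the statement is, as the paper records, still a conjecture.
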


Observe that this counting is purely topological, the results do not make any reference to the underlying Riemannian structure of the manifold.

\vskip 0.2 truecm
To summarize this discussion, according to \textcite{Kahn:2009wh}, there are many surface groups in $\pi_1(M)$ and we have a purely topological  asymptotic of the growth of   the numbers of those when the genus goes to infinity.  Calegari--Marques--Neves article also addresses the question of counting those subgroups but with a geometric  twist. Before explaining their result, let us review some fundamental results on minimal surfaces.

\section{Minimal surfaces in $3$-manifolds}

In order to recover the flexibility that we lost when considering closed totally geodesic submanifolds, let us now introduce {\em minimal immersions} which will allow us to extend our discussion about geodesics. We first spend some time recalling some basic properties and definition of minimal immersions, before actually addressing the question of counting surface subgroups.

\subsection{Minimal immersions}
 Let $(M,g)$ be a Riemannian manifold. We denote in general by $\vol(h)$, the volume density of a metric $h$. An immersion $f$ from a compact manifold $N$  into   $M$ is  a {\em minimal immersion} if $f$ is a critical point for the {\em volume functional}
$$
\operatorname{Vol}(f)\defeq \int_N \ \vol{f^*g}\ .
$$
More precisely, this means that for any family of smooth deformations $\{f_t\}_{t\in]
-1,1[}$ with $f_0=f$ we have
\begin{equation}
	\left.\frac{{\rm d}}{{\rm d}t}\right\vert_{t=0} \Vol(f_t)=0\ .\label{eq:defmin}
\end{equation}
To the family of deformations $\{f_t\}_{t\in]-1,1[}$ is associated the {\em infinitesimal deformation vector} $\xi$ which  is the section of $f^*(\T M)$ given by 
\begin{equation}
	\xi(x)=f^*\Bigl(\Bigl.\frac{{\rm d}}{{\rm d}t}\Bigr\vert_{t=0} f_t(x)\Bigr)\ .\label{eq:definf}
\end{equation}
\subsubsection{The first variation formula} One can now compute effectively
the left hand side of equation~\eqref{eq:defmin} by a classical computation
which is called the {\em first variation formula}. Let us introduce the second fundamental form $\rm{II}$ which is the symmetric tensor with values in the normal bundle given by 
$$
\rm{II}(X,Y)=p(\nabla_X Y)\,  
$$
where $X$ and $Y$ are tangent vectors to $N$, $\nabla$ is the Levi-Civita connection of $g$ pulled back on $f^*(\T M)$, and $p$ is the orthogonal projection of $f^*(\T M)$ on the normal bundle  of $N$. Then the first variation formula reads 
$$
\left.\frac{{\rm d}}{{\rm d}t}\right\vert_{t=0} \Vol(f_t)= \int_N\braket{\xi\mid H} \vol(f^* g)\ .
$$
where $H$ is {\em the mean curvature vector} defined as  the trace of $\rm{II}$.

Thus being a minimal immersion  is equivalent to the fact that the mean curvature vanishes identically. As an important corollary, we have two useful properties

\begin{coro}\label{cor:subh}
\phantomsection
\begin{enumerate}
	\item The restriction of a convex function to a minimal submanifold is subharmonic.	
	\item The curvature of a minimal surface at a point is less than the ambient curvature of its tangent plane.
\end{enumerate}

\end{coro}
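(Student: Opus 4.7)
Both assertions are immediate consequences of the Gauss formula, which compares the Levi-Civita connection $\nabla^M$ of $M$ with the induced connection $\nabla^N$ on a submanifold $N$: for $X,Y$ tangent to $N$,
\[
\nabla^M_X Y = \nabla^N_X Y + \mathrm{II}(X,Y),
\]
with $\mathrm{II}(X,Y) = p(\nabla^M_X Y)$ the second fundamental form already introduced in the text.

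For item (i), let $f$ be a smooth convex function on $M$, meaning that its Hessian $\nabla^2 f$ is a positive semidefinite symmetric bilinear form. For $X,Y$ tangent to $N$, applying the Gauss formula and the definitions of the Hessians on $N$ and on $M$ gives
\[
(\nabla^N)^2 (f|_N)(X,Y) \;=\; (\nabla^M)^2 f(X,Y) \;-\; \mathrm{II}(X,Y)(f).
\]
Taking the trace with respect to an orthonormal basis $\{e_i\}$ of $T_pN$ and recalling that $H = \operatorname{tr}_{g_N}\mathrm{II}$,
\[
\Delta_N(f|_N) \;=\; \sum_i (\nabla^M)^2 f(e_i,e_i) \;-\; \langle \nabla f, H\rangle.
\]
When $N$ is minimal, $H=0$, so $\Delta_N(f|_N)$ equals the trace of the Hessian of $f$ restricted to $T_pN$, which is nonnegative by convexity. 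Hence $f|_N$ is subharmonic.

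For item (ii), let $\Sigma$ be a minimal surface in $M$, pick a point $p\in\Sigma$ and an orthonormal basis $(e_1,e_2)$ of $T_p\Sigma$. The Gauss equation reads
\[
K_\Sigma(p) - K_M(T_p\Sigma) \;=\; \langle \mathrm{II}(e_1,e_1), \mathrm{II}(e_2,e_2)\rangle \;-\; |\mathrm{II}(e_1,e_2)|^2.
\]
Minimality is the vanishing of the trace of $\mathrm{II}$, that is $\mathrm{II}(e_1,e_1) = -\mathrm{II}(e_2,e_2)$. Substituting,
\[
K_\Sigma(p) - K_M(T_p\Sigma) \;=\; -|\mathrm{II}(e_1,e_1)|^2 - |\mathrm{II}(e_1,e_2)|^2 \;\leq\; 0,
\]
giving the claimed inequality.

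The only real input is the Gauss formula relating $\nabla^M$ and $\nabla^N$ through $\mathrm{II}$; there is no analytic obstacle, as both items reduce to linear algebra on $T_pN$ once the minimality condition $H=0$ (or $\operatorname{tr}\mathrm{II}=0$) is plugged in. If there were a potential subtlety, it would be keeping track of sign conventions for $\mathrm{II}$ and the mean curvature in the first variation formula stated above, but the conclusion is insensitive to this convention.
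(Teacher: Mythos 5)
The paper states this corollary without proof, treating it as a standard consequence of the preceding discussion of the second fundamental form and mean curvature; your proof is precisely the standard one and fills this gap correctly. Both computations are sound: item (i) by comparing intrinsic and extrinsic Hessians via the Gauss formula and observing that the discrepancy traces to $\braket{\nabla f\mid H}$, and item (ii) by the Gauss equation together with $\operatorname{tr}\mathrm{II}=0$. One small remark: with the paper's convention $\mathrm{II}(X,Y)=p(\nabla_X Y)$, the Hessian comparison should read
$$(\nabla^N)^2(f|_N)(X,Y)=(\nabla^M)^2 f(X,Y)+\mathrm{II}(X,Y)(f),$$
with a plus sign rather than the minus you wrote (since $\nabla^N_X Y=\nabla^M_X Y-\mathrm{II}(X,Y)$). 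You flag this ambiguity yourself and correctly note it is harmless: taking the trace turns the $\mathrm{II}$ term into $\braket{\nabla f\mid H}$, which vanishes by minimality regardless of sign, so the conclusion $\Delta_N(f|_N)=\operatorname{tr}\bigl((\nabla^M)^2 f|_{T_pN}\bigr)\geq 0$ stands.
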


One then defines a {\em minimal immersion} from a (possibly non compact, possibly with boundary) manifold  as one for which the mean curvature vanishes everywhere. Equivalently, one can show that those are the immersions $f$ for which for any variation $\{f_t\}_{t\in]-1,1[}$ with $f_0=f$,  constant on the boundary as well as outside a bounded  open set $U$,  we have

$$
\Bigl.\frac{{\rm d}}{{\rm d}t}\Bigr\vert_{t=0} \int_U\vol(f_t^* g)=0\ .
$$
When the dimension of~$N$ is~$1$, minimal immersions are  exactly parameterizations of geodesics.

\subsubsection{The second variation formula} The misleading terminology ``minimal immersions'' or ``minimal surfaces'' tends to  suggest that minimal surfaces are not only critical point of the area functional but actual minima. This not always the case.

In order to understand whether the immersion is actually a local minimum of the volume functional, we need ---as in the case for geodesics--- to study the  {\em second variation formula of the volume}. 

Let us assume for simplicity that the source is compact, since our goal is only to present the subject.

Let us denote by $\xi$ an infinitesimal variation of $f$ as in equation~\eqref{eq:definf}. We may as well assume that this infinitesimal variation is normal since tangent deformation do not affect the volume. Then the second variation formula is given by
\begin{equation}
	{\rm D}^2_f\Vol(\xi,\xi)\defeq\Bigl.\frac{{\rm d^2}}{{\rm d}t^2}\Bigr\vert_{t=0} \int_N\vol(f_t^* g)= 2\int_N (R_\xi+a_\xi -b_\xi)\vol(f^* g) \ ,\label{eq:2vf}
\end{equation}
where $R_\xi$, $a_\xi$ and $b_\xi$ are the trace ---with respect to the induced metric--- of the symmetric tensors defined by respectively
\begin{align*}
	R_\xi(X,Y)&\defeq\braket{R(\xi,X)\xi\mid Y}\ ,\\
	a_\xi(X,Y)&\defeq\braket{p(\nabla_X\xi)\mid p(\nabla_Y\xi)}\ , \\
	b_\xi(X,Y)&\defeq\braket{B(X)\xi\mid B(Y)\xi}\ ,
\end{align*}
where $X$ and $Y$ are tangent vectors and $\xi$ is normal, $R$ is the curvature tensor of the Levi-Civita curvature $\nabla$ of the ambient manifold, $B$ is the {\em shape operator} defined by
$$
\braket{B(X)\xi\mid Y}=\braket{\rm{II}(X,Y)\mid \xi}\ .
$$ 
Since our ultimate goal is to understand the sign of ${\rm D}^2\Vol$, we now comment on the sign of these quantities: 
\begin{enumerate}
\item $a_\xi$ is nonnegative.
	\item $b_\xi$ is nonnegative, but vanishes when the submanifold is {\em totally geodesic}.
	\item $R_\xi$ is nonnegative when the ambient curvature is nonpositive. When the ambient manifold is hyperbolic, $R_\xi=2\Vert\xi\Vert^2$.
\end{enumerate}
In particular, when the ambient curvature is nonpositive and the submanifold is totally geodesic then $D_f^2\Vol(\xi,\xi)$ is  nonnegative and the minimal immersion is a local minimum. This covers for instance the case of geodesics in nonpositive curvature.

However, in general one cannot expect that just controlling the sign of the curvature would guarantee that the minimal immersion is an actual local minimum. Nevertheless, we shall see that under some other additional assumptions the minimal immersion will be a local minimum. We now introduce the standard terminology:

\begin{defi}
A minimal immersion $f$ is {\em stable}\footnote{The terminology is unstable here: some people call this condition {\em semistable}} if for any compactly supported infinitesimal deformation $\xi$, ${\rm D}^2_f\Vol(\xi,\xi)\geq 0$.
\end{defi}
Thus a totally geodesic surface in a nonpositively curved manifold is stable.
A famous result proved independently  in
\textcite{Pogorelov:1981uq}, \textcite{Carmo:1979uo}, and \textcite{Fischer-Colbrie:1980vl} states
\begin{theo}\label{theo:CP}
	The plane is the only stable embedded complete minimal surface in~$\mathbb R^3$.
\end{theo}

As a standard phenomenon, we will explain later on, that such a rigidity result implies a compactness property for the space of stable minimal surfaces (proposition~\ref{pro:F-Sch}).

\subsection{Minimal surfaces in $3$-manifolds} Let us now focus on minimal immersions of surfaces in $3$-manifolds. The volume is then called the {\em area} of the immersion,
and denoted by $\operatorname{Area}(f)$.

Two important results  by \textcite{Schoen:1979} and \textcite{Sacks:1982} guarantee the existence of minimal surfaces from some topological data. Here is a special case of their result.

Say a continuous map between two  connected manifolds is {\em incompressible} if it is injective at the level of fundamental groups.

\begin{theo}\label{theo:SY}
Let $f$ be a continuous incompressible map from a closed surface to a compact negatively curved  $3$-manifold. Then there exists a minimal immersion, homotopic to $f$, which is minimal and achieves the minimum of the area amongst all possible maps homotopic  to~$f$.
\end{theo}

In particular, surface groups in fundamental groups of compact hyperbolic manifolds can be represented by minimal surfaces, albeit not necessarily uniquely as we shall see.

As an application,  let us now give a hint of the proof by \textcite{Thurston:1997ux} of theorem~\ref{theo:finitely} in the case of the fundamental groups of a hyperbolic manifold as was explained to us by Delzant. \begin{theo}\label{theo:thurs}
	The fundamental group of a hyperbolic manifold  only contains finitely many conjugacy classes of surface groups of a given genus.
\end{theo}
\begin{proof} Let $S$ be a minimal surface in $M$ representing a surface group.
	The curvature of the minimal surface $S$ is bounded from above by the curvature of $M$, and thus the area $\Area(S)$ of $S$ is bounded from above by $4\pi(g-1)$, where $g$ is the genus of $S$.
	
	Moreover since the surface is incompressible, the injectivity radius $i_S$ of $S$ is bounded from below by the injectivity radius  $i_M$ of $M$. Let then $a(i_M)$ be a lower bound of the area of a ball of radius $i_M/2$ in $S$ and observe that by comparison theorems, we can have an explicit formula for $a(i_M)$ in terms of $i_M$. 
	
	Thus we can cover $S$ by $\Area(S)/a(i_M)$ balls of radius $i_M/2$. Hence $\pi_1(S)$ is generated by curves of length (in $S$) less than $2i_M\Area(S)/a(i_M)$ and hence less than $8\pi i_M (g-1)/a(i_M)$. The same holds {\it a fortiori} for the length of those curves in $M$. This implies that there is only finitely many possibilities for conjugacy classes of surface groups.
\end{proof}
Observe that Thurston originally used pleated surfaces rather than minimal ones: we only use the fact that the surface representing the surface group has curvature no greater than $-1$.

\subsubsection{Back to quasi-Fuchsian manifolds}
The work of Schoen and Yau carries on immediately in the context of quasi-Fuchsian manifolds, due to the existence of a convex hull which traps minimal surfaces:

\begin{prop}\label{pro:trap} Let $M$ be a $3$-manifold of curvature less than $-1$. Let $\rho$ be a representation of $\pi_1(S)$ in the isometries of $M$. Let $C$ be a convex set in $M$ invariant by the action of $\rho(\Gamma)$. 
	Let $f$ be a  minimal immersion of the universal cover $\tilde S$ of $S$,  equivariant under $\rho$, then $f(S)$ is a subset of $C$.
		\end{prop}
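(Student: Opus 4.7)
The strategy is to apply the maximum principle to a strictly convex function associated to $C$, using Corollary~\ref{cor:subh}: the restriction of a convex function to a minimal submanifold is subharmonic.

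I would work in the universal cover and consider $\phi := \cosh(d_C)$, where $d_C$ is the distance to $C$. In a Cartan--Hadamard manifold of sectional curvature at most $-1$, hyperbolic comparison gives, at all smooth points,
\begin{equation*}
\operatorname{Hess}(\phi) \geq \phi\, g,
\end{equation*}
with equality realized in $\hhh$. In particular $\phi$ is convex on $M$ and \emph{strictly} convex off $C$.

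By $\rho$-equivariance of $f$ and $\rho(\Gamma)$-invariance of $C$, the pullback $\phi\circ f$ is $\Gamma$-invariant on $\tilde S$ and descends to a continuous function $\bar\phi \geq 1$ on the compact surface $S$. Corollary~\ref{cor:subh} makes $\bar\phi$ subharmonic. Moreover, at any point $p$ with $f(p) \in M \setminus C$, tracing the Hessian inequality on the tangent $2$-plane and using minimality ($H=0$) upgrades this to
\begin{equation*}
\Delta \bar\phi(p) \geq 2\,\bar\phi(p) > 0,
\end{equation*}
so $\bar\phi$ is \emph{strictly} subharmonic at such points.

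Finally, apply the strong maximum principle: the compact surface $S$ yields a maximizer $p$ of $\bar\phi$. If $f(p)$ were in $M\setminus C$, strict subharmonicity would preclude an interior maximum at $p$, a contradiction. Hence $f(p)\in C$, so $\max_S \bar\phi = \cosh 0 = 1$; since $\bar\phi \geq 1$ everywhere, $\bar\phi\equiv 1$, whence $d_C\circ f\equiv 0$ and $f(\tilde S)\subset C$. The main delicacy is the choice of $\cosh(d_C)$ rather than $d_C$ itself, which ensures genuine \emph{strict} convexity off $C$ (plain $d_C$ is only weakly convex along geodesics tangent to equidistant hypersurfaces); the non-smoothness of $d_C$ on $\partial C$ is harmless because the strict-subharmonicity contradiction is invoked only where $f(p)\notin C$.
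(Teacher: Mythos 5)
Your overall strategy is the same as the paper's: pull back a distance-to-$C$ function along $f$, use Corollary~\ref{cor:subh} plus convexity to get (strict) subharmonicity away from $C$, and conclude by the maximum principle on the compact quotient. The paper applies this directly to $d_C$ in two sentences.

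However, the specific comparison you invoke is not right. The Hessian estimate $\operatorname{Hess}(\cosh d_C)\geq \cosh(d_C)\,g$ is the Hessian comparison theorem for the distance to a \emph{point} (where $\operatorname{Hess}(d)\geq\coth(d)(g-\mathrm{d}r\otimes\mathrm{d}r)$). For the distance to a \emph{convex set} the correct lower bound on the tangential Hessian is $\tanh(d_C)$, not $\coth(d_C)$ (the worst case being a totally geodesic boundary in $\hhh$), so one only gets $\operatorname{Hess}(\cosh d_C)\geq \cosh(d_C)\,\mathrm{d}r\otimes\mathrm{d}r+\sinh(d_C)\tanh(d_C)(g-\mathrm{d}r\otimes\mathrm{d}r)$, which is strictly smaller than $\cosh(d_C)\,g$ in the tangential directions; your inequality fails already for the distance to a geodesic in $\hhh$. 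Your closing remark also has the degeneracy backwards: $d_C$ is \emph{linear} (not merely weakly convex) along minimizing geodesics to $C$ and is strictly convex \emph{tangent} to the equidistant hypersurfaces, not the other way around. The $\cosh$-twist is a fix for the one-dimensional (geodesic) problem, where a geodesic could run radially; it is unnecessary for a surface, since any tangent $2$-plane must meet the tangent space of the level set in at least a line, along which $\operatorname{Hess}(d_C)$ is already strictly positive. Thus $\operatorname{trace}_{T_pS}\operatorname{Hess}(d_C)>0$ whenever $d_C(f(p))>0$, and with $H=0$ this gives strict subharmonicity of $d_C\circ f$ off $C$ directly — which is exactly the paper's route. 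Your conclusion is therefore salvageable, but the quantitative bound $\Delta\bar\phi\geq 2\bar\phi$ as you justify it is not.
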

\begin{proof} The distance function to the convex set $C$ is convex and strictly convex for positive values. Hence by corollary~\ref{cor:subh}, its pullback on $S$ is strictly subharmonic for positive values and $\pi_1(S)$-invariant, hence vanishes identically.
\end{proof}

However the number of those surfaces is not {\it a priori} bounded, as it follows from results of  \textcite{Anderson:1983cm} and \textcite{Huang:2015wl}

\begin{theo}
	 For any given positive integer $N$, there exists a quasi-Fuchsian manifold that contains at least $N$ distinct \textup{(}closed, incompressible and embedded\textup{)} minimal surfaces.
\end{theo}

For a survey about the use of minimal surfaces in $3$-manifolds, see \textcite{Hass:2005tr}.

\subsection{Compactness results} 

The set of minimal surfaces enjoy compactness properties. In particular we have the standard fact valid in all dimensions.
\begin{prop}\label{pro:compac}
	Let $\seqm{(M_m,h_m,x_m)}$ be a sequence of pointed Riemannian manifolds converging to a Riemannian manifold $(M_\infty,h_\infty,x_\infty)$. For each $m$, let $S_m$ be a complete minimal surface  without boundary in $M_m$, so that $x_m$ belongs to $S_m$. Assume that, for every $R$, the second fundamental form of $S_m$ is bounded independently on $m$, on every ball in $S_m$ containing $x_m$ and radius $R$.

	Then the sequence of pointed minimal surfaces $\seqm{(S_m,x_m)}$ converges uniformly on every compact  to a pointed  minimal surface $(S_\infty,x_\infty)$ in $M_\infty$
\end{prop}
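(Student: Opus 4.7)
The plan is to argue locally first and then globalize by a diagonal extraction. Near $x_m$, I would exploit the uniform bound $\lvert\operatorname{II}(S_m)\rvert\leq C$ on a fixed ball $B(x_m,R)\subset S_m$ to write a piece of $S_m$ as a graph over its tangent plane $P_m\defeq \T_{x_m}S_m$. Concretely, the exponential map of the ambient metric $h_m$ restricted to a disk of radius $r=r(C,h_\infty)$ in $P_m$ realizes $S_m\cap B_{M_m}(x_m,r)$ as the graph of a smooth function $u_m:D_r\subset P_m\to P_m^\perp$ whose first derivative is controlled by the injectivity radius of the normal exponential map and whose second derivative is controlled by $\lvert \operatorname{II}\rvert$. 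Up to applying a sequence of ambient isometries identifying $(M_m,x_m)$ with $(M_\infty,x_\infty)$ (via the convergence hypothesis) and fixing, modulo a subsequence, a limit plane $P_\infty\subset \T_{x_\infty}M_\infty$, the maps $u_m$ are defined on a common disk $D_r\subset P_\infty$ with uniformly bounded $C^2$-norm.

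Now the minimal surface equation $H(u)=0$ is a quasilinear elliptic PDE in $u$ whose coefficients are smooth functions of $u$, $\nabla u$ and of the ambient metric. The uniform $C^2$-bound puts the equation in non-degenerate form with Hölder-bounded coefficients; Schauder estimates then upgrade the control to uniform $C^{k,\alpha}$-bounds on a slightly smaller disk, for every $k$. Since $h_m\to h_\infty$ in $C^\infty_{loc}$, Arzelà--Ascoli extracts a subsequence $u_m\to u_\infty$ in $C^k$ on any $D_{r'}$, $r'<r$, and the limit still satisfies $H_{h_\infty}(u_\infty)=0$, giving a piece of minimal surface $\Sigma_\infty$ through $x_\infty$ tangent to $P_\infty$.

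To globalize, I would perform a standard continuation along $S_\infty$. Pick any point $y_\infty\in\Sigma_\infty$ obtained as a limit of $y_m\in S_m$; the same argument applied at $y_m$ produces a new graph piece, and uniqueness of smooth minimal surfaces through a point with prescribed tangent plane (which follows from the unique continuation property of the elliptic PDE) guarantees that this new piece agrees with $\Sigma_\infty$ on the overlap. Iterating over a countable dense set in $S_\infty$ and extracting a diagonal subsequence yields uniform convergence on every compact subset of a maximal smooth minimal surface $(S_\infty,x_\infty)$; the hypothesis that $\lvert\operatorname{II}\rvert$ is bounded on every intrinsic ball $B(x_m,R)\subset S_m$ ensures, via the graph-size estimate, that the intrinsic distance in $S_m$ controls the distance to $x_\infty$ in the limit and hence that every compact of $S_\infty$ is exhausted.

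The main obstacle is bookkeeping rather than analytic: it is the propagation from the local graph statement to a globally well-defined limit surface, and in particular showing that intrinsic balls of radius $R$ in $S_m$ converge to intrinsic balls in $S_\infty$. This requires the uniform second fundamental form bound to produce a definite \emph{uniform} graph size $r(C)$ at every point of $S_m$ within distance $R$ of $x_m$, so that after finitely many steps one covers the intrinsic ball and can pass to the limit coherently via the unique continuation argument sketched above.
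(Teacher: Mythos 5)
The paper gives no proof of this proposition: it is quoted as a ``standard fact'' and the text moves on immediately to Proposition~\ref{pro:F-Sch}. Your argument is the standard one for this kind of compactness statement --- uniform graph representation over the tangent plane from the second fundamental form bound, elliptic bootstrap (minimality as a quasilinear elliptic equation plus Schauder), Arzelà--Ascoli, then continuation to cover intrinsic balls of arbitrary radius --- and the overall structure is sound. Three points deserve attention. First, pointed convergence $(M_m,h_m,x_m)\to(M_\infty,h_\infty,x_\infty)$ furnishes \emph{diffeomorphisms} between neighbourhoods of the base points along which the metrics converge, not isometries; the identification of the graphs $u_m$ on a common disk should be phrased in these terms. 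Second, the justification for the gluing step is off: there is no ``uniqueness of smooth minimal surfaces through a point with prescribed tangent plane'' (this is first-order contact, far from the infinite-order contact unique continuation requires; in $\mathbb R^3$ many minimal graphs share a point and a tangent plane). What you actually want is simpler and does not use unique continuation at all: the two candidate limit pieces over an overlap both arise as local Hausdorff/$C^\infty$ limits of the \emph{same} sequence of graphs coming from $S_m$, and such limits are unique, so the pieces coincide on the overlap. Third, the correct statement of the conclusion is convergence of a \emph{subsequence}; the hypothesis (bound on $\operatorname{II}$ on intrinsic balls of every radius $R$, uniformly in $m$) is precisely what lets the diagonal extraction exhaust intrinsic balls of every radius, as you note at the end.
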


The following is essentially contained in \textcite{Fischer-Colbrie:1980vl} and is a consequence of Theorem~\ref{theo:CP}. Let $\lambda$ be the function on a minimal surface defined as the positive eigenvalue of the shape operator.

\begin{prop}\label{pro:F-Sch}
	Let $M$ be a $3$-manifold with a metric $h$ with curvature bounded from above by $-1$. Then there is a positive $K$ only depending on $M$, such that for any  stable minimal disk $D$ embedded  in the universal cover  $\tilde M$, we have $\lambda(D)\leq K$. 
\end{prop}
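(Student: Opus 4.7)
I would argue by contradiction and carry out a standard blow-up argument, combining the uniform geometry of $\tilde M$, Proposition~\ref{pro:compac} and the rigidity Theorem~\ref{theo:CP}.

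Suppose the conclusion fails. Then there exists a sequence of stable minimal disks $D_m$ embedded in $\tilde M$ together with points $y_m\in D_m$ with $\lambda(y_m)\to\infty$. The first technical step is a point-picking argument \`a la Hamilton: I would replace $y_m$ by $x_m\in D_m$ such that, setting $\lambda_m\defeq\lambda(x_m)$, one has $\lambda_m\geq\lambda(y_m)$ and $\lambda\leq 2\lambda_m$ on the intrinsic $D_m$-ball of radius $r_m$ around $x_m$, where $r_m\lambda_m\to\infty$. (The existence of such an $x_m$ is obtained inductively, using that the supremum of $\lambda$ on large balls is finite because $D_m$ has no boundary in a fixed compact piece of $\tilde M$.)

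Next, I rescale the ambient metric on $\tilde M$ by the factor $\lambda_m^2$ and view $x_m$ as a base point. After rescaling, the second fundamental form of $D_m$ has norm $1$ at $x_m$ and is bounded by~$2$ on the intrinsic ball of radius $r_m\lambda_m\to\infty$. Because $M$ is compact, the ambient sectional curvature of $\tilde M$ is uniformly bounded, hence after rescaling it tends to~$0$; therefore the pointed rescaled manifolds $(\tilde M,\lambda_m^2\,h,x_m)$ converge in the $C^\infty$ pointed Cheeger--Gromov sense to $(\mathbb R^3,\mathrm{flat},0)$. Proposition~\ref{pro:compac} then produces, after extraction, a pointed limit $(D_\infty,x_\infty)$ which is a complete minimal surface in $\mathbb R^3$. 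Since each $D_m$ is embedded and the second fundamental forms are uniformly bounded on larger and larger balls, the limit $D_\infty$ is also embedded (transversality is preserved at uniform scale).

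Finally, stability passes to the limit: for any compactly supported normal variation $\xi$ on $D_\infty$, one can pull back $\xi$ to variations $\xi_m$ on $D_m$ via the convergence, and the three terms $R_\xi$, $a_\xi$, $b_\xi$ in the second variation formula~\eqref{eq:2vf} converge, so ${\rm D}^2_{D_\infty}\Vol(\xi,\xi)\geq 0$. Thus $D_\infty$ is a complete stable embedded minimal surface in $\mathbb R^3$. By Theorem~\ref{theo:CP}, $D_\infty$ must be a plane, whose shape operator vanishes identically. This contradicts the fact that $\lambda(x_\infty)=1$ by construction. I expect the main obstacle to be the point-picking step together with verifying that embeddedness survives passage to the limit; the rest is a routine compactness-plus-rigidity scheme, of which Proposition~\ref{pro:F-Sch} is a prototypical instance.
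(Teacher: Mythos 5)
Your proof is correct and follows essentially the same route as the paper: a point-picking step (the paper invokes Gromov's $\Lambda$-maximum lemma where you invoke Hamilton's trick, but these are the same device), a rescaling of the ambient metric so that the pointed rescaled manifolds converge to flat $\mathbb R^3$, Proposition~\ref{pro:compac} to extract a limiting complete stable minimal surface, and Theorem~\ref{theo:CP} to identify it as a plane, contradicting the normalization of the second fundamental form at the base point. The paper is a bit terser --- it does not spell out why embeddedness and stability pass to the limit, and it has a small typographical slip writing $h_m=\Lambda_m h$ where $\Lambda_m^2 h$ is meant, as you correctly use --- but the logic is identical.
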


We sketch a proof to emphasize  the standard philosophy in geometric analysis that a rigidity result yields a compactness result.
\begin{proof}[Sketch of a proof]
 Let us give a proof by contradiction and assume that there exists a sequence of stable complete minimal disks $\seq{D}$ and a point $x_m$ in $D_m$ so that $\seqm{\lambda_m(x_m)}$ goes to infinity. Here we denote by $\lambda_m$ the function $\lambda$ on $D_m$. We can assume using the cocompact group $\pi_1(M)$ in $\iso(\tilde M)$ that $x_m$ lies in a compact fundamental domain for $\pi_1(M)$.
 
 The optimum would be to find a point~$x_m$ in~$D_m$ where $\lambda_m$~achieves its maximum value~$\Lambda_m$, which happens for instance if the disk projects to a closed surface in~$M$. However, since this is not necessarily the case, we use a classical trick in geometric analysis. 
 
 Let then  $K_m$ be the maximum of $\lambda_m$ on the ball of center $x_m$ and radius $10$. 
  According to  the {\em $\Lambda$-maximum lemma} as in \textcite[Paragraph 1.D]{Gromov:1991uy}, and assuming $\lambda_m(x_m)\geq 1$  there exists $y_m$,  such that	
  \begin{equation}
	   \Lambda_m\defeq \lambda_m(y_m)\geq \sup\Bigl\{K_m,\frac{1}{2}\lambda_m(z)\mid d(z,y_m)\leq  \frac{1}{2\sqrt{\lambda_m(y_m)}}\Bigr\}\ .\label{eq:lambda}
	 \end{equation}	
Observe that $\seq{\Lambda}$ also goes to infinity. 
We now consider  the metric  $h_m=\Lambda_mh$ on~$M$, 
associated to a distance $d_m$, and observe that $(\tilde M,h_m,y_m)$ converges smoothly on every compact to a Euclidean space.  The new eigenvalue function $\tilde\lambda_m$ is now equal to~\mbox{$\lambda_m/\Lambda_m$}.

Thus we obtain from the assertion~\eqref{eq:lambda}, that  
$$
\tilde\lambda_m(z)\leq 2, \hbox{ if } d_m(z,y_m)\leq \frac{\sqrt{\Lambda_m}}{2}\ .
$$
Thus the sequence of minimal surface $\seqm{(S_m,y_m)}$ has bounded second
fundamental form on larger and larger balls, and thus, by
proposition~\ref{pro:compac},  the sequence  converges (on every compact)  to
a minimal surface $(S_\infty,y_\infty)$ in $(M,h_\infty,y_\infty)$. Now
$S_\infty$ is a stable minimal surface in the Euclidean $3$-space  $(M,h_\infty)$. By Theorem~\ref{theo:CP},  $S_\infty$ is a plane and thus  its  second fundamental form  is zero. Hence,  $\tilde\lambda_m$ converges to zero uniformly on every compact and this contradicts $\tilde\lambda_m(y_m)=1$. This proves by contradiction that $\seq{\Lambda}$ is  bounded. It follows that $\seq{K}$ --- and in particular $\seqm{\lambda_m(x_m)}$ --- is bounded. \end{proof}

\subsection{Almost Fuchsian minimal surfaces, Uhlenbeck's result and the asymptotic Plateau problem}
The examples constructed by Anderson, then by Huang  and Wang are far from being Fuchsian but  the situation improves when we are close to being Fuchsian.

\subsubsection{Almost Fuchsian minimal surfaces}
Let us go back to the second variation formula~\eqref{eq:2vf} for surfaces in the case of hyperbolic $3$-manifolds. We saw that the Hessian of the volume at a minimal immersion $f$ is given by \begin{equation}
	{\rm D}^2_f\Vol(\xi,\xi)= 2\int_N (R_\xi+a_\xi -b_\xi)\vol(f^* g)\ ,
\end{equation}
where $a_\xi\geq 0$, $R_\xi=2\Vert\xi\Vert^2$ and 
$$
b_\xi=\operatorname{Trace}\braket{B(X)\xi\mid B(Y)\xi}=2\lambda^2\Vert\xi\Vert^2\ , 
$$
where $\lambda$ is the positive eigenvalue of $B$. Thus if we assume that $\lambda<1$ we can guarantee that $b_\xi\leq R_\xi$ and thus that ${\rm D}^2\Vol(\xi,\xi)>0$, for a non vanishing $\xi$, hence that $S$ is stable. 

This suggest the following definition, where the  term {\em almost Fuchsian} was coined by \textcite{Krasnov:2007wd}.

\begin{defi}
	\begin{enumerate}
	\item A {\em  nearly geodesic minimal surface} is a complete minimal surface $S$ in $\hhh$ with  $\lambda(S)<1$. If the nearly geodesic surface is invariant under a quasi-Fuchsian group, we say the quasi-Fuchsian group is {\em almost Fuchsian} and the nearly geodesic surface almost Fuchsian.	
		\item An {\em almost Fuchsian manifold} is a quasi-Fuchsian manifold that contains an almost Fuchsian minimal surface.
		\end{enumerate}
		\end{defi}
Using our freshly minted terminology, we can rephrase the previous discussion as the first part of the proposition
\begin{prop}
A nearly geodesic minimal surface in a hyperbolic $3$-manifold  is stable.
\end{prop}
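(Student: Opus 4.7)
The plan is to apply the second variation formula~\eqref{eq:2vf} and to check that, under the hypothesis $\lambda<1$ and in curvature $-1$, the integrand is pointwise nonnegative. As usual, it suffices to test stability against compactly supported normal infinitesimal deformations, since tangential ones do not affect the area. Because $S$ is a surface in a $3$-manifold, the normal bundle is a line bundle, so I may write $\xi = \phi\, n$ for a choice of unit normal $n$ and a compactly supported function $\phi$.

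Next I would compute the three traces in~\eqref{eq:2vf} pointwise. For $R_\xi$, the hyperbolic identity $R(\xi,X)\xi = \|\xi\|^2 X$ for $X$ tangent and $\xi$ normal gives $R_\xi = 2\|\xi\|^2$ upon tracing over an orthonormal basis of $\T_x S$; this is just the value recorded in the paper. The term $a_\xi$ is nonnegative by construction. For $b_\xi$, I would note that minimality forces the symmetric endomorphism $A$ of $\T S$ defined by $A(X) = B(X)n$ to have vanishing trace, hence eigenvalues $\pm\lambda$; consequently
$$
b_\xi = \phi^2 \operatorname{tr}(A^2) = 2\lambda^2 \|\xi\|^2.
$$

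Combining these three ingredients, the integrand in~\eqref{eq:2vf} satisfies $R_\xi + a_\xi - b_\xi \geq 2(1-\lambda^2)\|\xi\|^2 \geq 0$ by the nearly-geodesic assumption $\lambda<1$. Integrating over $S$ yields ${\rm D}^2_f \Vol(\xi,\xi) \geq 0$ for every compactly supported normal $\xi$, which is precisely the definition of stability. I do not foresee any real obstacle: the calculation has essentially been spelled out in the paragraph preceding the statement, and the only genuine content is to identify the three traces in terms of $\lambda$ and to invoke the constancy of the sectional curvature of $\hhh$.
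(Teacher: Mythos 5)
Your proof is correct and is essentially the paper's own argument: the proposition is explicitly presented there as a rephrasing of the discussion immediately preceding it, which records $R_\xi=2\Vert\xi\Vert^2$, $a_\xi\geq 0$, and $b_\xi=2\lambda^2\Vert\xi\Vert^2$ and concludes that $\lambda<1$ forces $b_\xi\leq R_\xi$, hence stability. You merely fill in the elementary details (normal line bundle, trace-free shape operator with eigenvalues $\pm\lambda$, constant-curvature form of the Riemann tensor), so this is the same approach.
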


Then, as suggested by this stability result, \textcite{Uhlenbeck:1983wl} proved the following, which is a simple application of the maximum principle.

\begin{theo}\label{theo:uhl}
An almost Fuchsian manifold contains a unique minimal embedded incompressible surface, which is then stable.
\end{theo}

\subsubsection{Asymptotic Plateau problem} Let us quit the realm of equivariant minimal surfaces. As a special case of a theorem of  \textcite{Anderson:1982vw} we have 
\begin{theo}
	Given any embedded circle $C$ in $\bhhh$, there exists a minimal embedded surface in $\hhh$ bounded by $C$ in the Poincaré ball model.
\end{theo}
Let us then say that
\begin{defi}
An embedded minimal surface $S$ is solution of the {\em asymptotic Plateau problem} defined by the embedded circle $C$ in $\bhhh$ if in the ball model the closure $\overline{S}$ of $S$ is 
$
S\sqcup C.
$	
Alternatively we say that $C$ is the {\em boundary at infinity} of $S$, that $S$ is bounded by $C$ and write $C=\partial_\infty S$.
\end{defi}

One naturally hopes there should be a correspondence between $K$-quasicircle and almost Fuchsian minimal surfaces. This is indeed  obtained as a consequence of a theorem of  \textcite[Theorem A]{Seppi:2016ut}, while the second part follows from an extension of \textcite{Guo:2010wk}, where the result is only stated for almost Fuchsian surfaces.

\begin{theo}\label{theo:Seppi}
	There exist  constants  $K_0$ and  $C_0$ such that if $S$ is a complete embedded minimal surface  whose boundary at infinity is a $K$-quasicircle, with $K$ less than $K_0$,  then
	$$
	\lambda(S)\leq C_0 \log(K)\ .
	$$
	Conversely, there exists $\lambda_0$, such that if $\lambda(S)\leq \lambda\leq \lambda_0$, then the surface $S$ is embedded in $\hhh$ and the boundary at infinity is a $K(\lambda)$-quasicircle, with
	$$
	\lim_{\lambda\to 0}K(\lambda)=1\ .
	$$
\end{theo}

For a survey in the asymptotic Plateau problem for minimal surfaces see \textcite{Coskunuzer:2014uk}, for results when the target is negatively curved see \textcite{Lang:2003we}.

\subsection{In variable curvature}  Assume now that the closed hyperbolic manifold $(M,h_0)$ is also equipped with a metric $h$ of curvature less than $-1$.

For any set $\Lambda$ in $\partial_\infty\tilde M$, let $\Env_h(\Lambda)$ be the {\em  convex hull  of $\Lambda$} that is the intersection of all convex subsets  of $\tilde M$  whose closure in $\tilde M\sqcup\partial_\infty\tilde M$  contains $\Lambda$.

Observe again that thanks to Mostow rigidity $K$-quasicircles in the boundary at infinity of the universal cover of $(M,h)$ is a topological notion. We denote by $\tilde M$ the universal cover of $M$.

For the paper being discussed, the authors need to obtain a control between minimal surfaces for both $(M,h)$ and $(M,h_0)$ given in \cite[Theorem 3.1.]{Calegari:2020uo}. This result
follows from results of \textcite{Bowditch:1995wo}.

\begin{theo}\label{theo:var}{\sc [Morse Lemma for minimal surfaces]}
There exists a positive constant~$R$, such that if $S$ and $S_0$ are incompressible minimal surfaces in $M$, for~$h$ and~$h_0$, having the same fundamental group, then
	$$
	d_0(S_0,S)\leq R\ ,
	$$
	where $R$ only depends on $h$, $h_0$ and $\lambda(S_0)$, and $d_0$ is the distance with respect to $h_0$. 
\end{theo}
Correctly extended this result also makes sense for other minimal surfaces than equivariant ones, and could be understood as a Morse Lemma for minimal surfaces.
Since $S$ and $S_0$ are trapped in $\Env_h(\Lambda)$ and $\Env_{h_0}(\Lambda)$ respectively by proposition~\ref{pro:trap}, it is enough to prove 

\begin{prop} For any set $\Lambda$ in $\partial_\infty \tilde M=\partial_\infty \hhh$, we have 
\begin{equation}
d_h(\Env_h(\Lambda),\Env_{h_0}(\Lambda))\leq R\ ,	\label{ineq:CC0}
\end{equation}
for some constant only depending and $h$ and $h_0$. 
\end{prop}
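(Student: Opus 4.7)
The plan is to exploit the fact that the two metrics $h$ and $h_0$ live on the same compact manifold, which forces the identity map $\tilde{M}\to\tilde{M}$ to be an $L$-bi-Lipschitz equivalence between $(\tilde{M},h)$ and $(\tilde{M},h_0)$, with $L$ depending only on $h$ and $h_0$. Since both metrics have curvature bounded above by $-1$, both universal covers are $\mathrm{CAT}(-1)$, hence in particular Gromov hyperbolic with hyperbolicity constants $\delta_h$ and $\delta_{h_0}$ depending only on the metrics. The identification $\partial_\infty\tilde M=\partial_\infty\hhh$ is the boundary map of this bi-Lipschitz equivalence.

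Next, I would introduce, for any closed subset $\Lambda\subset\partial_\infty\tilde M$ with at least two points, the \emph{weak hull}
\[
\mathcal J_h(\Lambda)\defeq\bigcup_{a\neq b\in\Lambda}\gamma^h_{a,b}\ ,
\]
where $\gamma^h_{a,b}$ is the unique $h$-geodesic with endpoints $a$ and $b$ in $\partial_\infty\tilde M$, and similarly $\mathcal J_{h_0}(\Lambda)$. The key ingredient from Bowditch is that, in a $\mathrm{CAT}(-1)$ space, the convex hull of a closed set at infinity lies at bounded Hausdorff distance from its weak hull: there exist constants $R_1=R_1(h)$ and $R_1'=R_1'(h_0)$, independent of $\Lambda$, such that
\[
d_h^H\bigl(\Env_h(\Lambda),\mathcal J_h(\Lambda)\bigr)\leq R_1\ ,\quad d_{h_0}^H\bigl(\Env_{h_0}(\Lambda),\mathcal J_{h_0}(\Lambda)\bigr)\leq R_1'\ .
\]
Replacing $\Lambda$ by its closure does not affect either $\Env$ or $\mathcal J$, so it suffices to treat closed $\Lambda$.

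Then I would bound $d_h^H(\mathcal J_h(\Lambda),\mathcal J_{h_0}(\Lambda))$ via the Morse lemma in Gromov hyperbolic spaces. Any $h_0$-geodesic is, via the identity map, an $(L,0)$-quasi-geodesic in $(\tilde M,h)$, so by the Morse lemma applied in the $\delta_h$-hyperbolic space $(\tilde M,h)$ it stays within $h$-distance $R_2(L,\delta_h)$ of the $h$-geodesic with the same endpoints at infinity; and vice versa. Letting $a,b$ range over pairs in $\Lambda$ gives
\[
d_h^H\bigl(\mathcal J_h(\Lambda),\mathcal J_{h_0}(\Lambda)\bigr)\leq R_2\ ,
\]
with $R_2$ independent of $\Lambda$. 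Combining the three estimates via the triangle inequality (the middle one translated from $h_0$ to $h$ using the bi-Lipschitz constant $L$ to handle $R_1'$) yields \eqref{ineq:CC0} with $R=R_1+R_2+LR_1'$.

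The main obstacle is the Bowditch-type comparison between the axiomatic convex hull $\Env(\Lambda)$ and the more tractable weak hull $\mathcal J(\Lambda)$: in a $\mathrm{CAT}(-1)$ manifold one must argue that any point of $\Env(\Lambda)$ is within bounded distance of some bi-infinite geodesic with endpoints in $\Lambda$, which requires extracting such a geodesic by taking limits of chords and controlling their defect using the $\delta$-thin triangle property. Once this lemma is in hand---this is exactly where \textcite{Bowditch:1995wo} is invoked---the rest is a straightforward quasi-isometry argument, and the uniformity in $\Lambda$ follows because all constants depend only on the hyperbolicity constants and the bi-Lipschitz ratio, which are global to $M$.
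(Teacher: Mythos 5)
Your proof is correct and is in the same spirit as the paper's, but uses a slightly different intermediate object. Where you replace $\Env_h(\Lambda)$ by the weak hull $\mathcal J_h(\Lambda)$ (the union of all bi-infinite $h$-geodesics with both endpoints in $\Lambda$), the paper replaces it by the basepoint-dependent cone
$\operatorname{Clo}^h_p(\Lambda)$
of geodesic rays emanating from a fixed $p\in\Env_h(\Lambda)$ toward $\Lambda$, quoting \textcite[prop.~2.5.4]{Bowditch:1995wo} for the bound $d_h(\Env_h(\Lambda),\operatorname{Clo}^h_p(\Lambda))\leq R_1(h)$. Because that object carries a basepoint, the paper needs two extra steps: first, the Morse lemma to find a pair of nearby points $p\in\gamma$, $p_0\in\gamma_0$ on geodesics for $h$ and $h_0$ joining the same pair of endpoints in $\Lambda$; second, a Lipschitz-type estimate $d_h(\operatorname{Clo}^h_p(\Lambda),\operatorname{Clo}^h_q(\Lambda))\leq R_3+d_h(p,q)$ for moving the basepoint. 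Your version sidesteps the basepoint bookkeeping entirely: the weak hull is symmetric, you compare $\mathcal J_h$ and $\mathcal J_{h_0}$ geodesic-by-geodesic via the Morse lemma (an $h_0$-geodesic is an $(L,0)$-quasi-geodesic for $h$), and you conclude by the triangle inequality. What your route buys is cleanliness; what the paper's route buys is a direct citation to a numbered proposition in Bowditch that is literally about $\operatorname{Clo}^h_p$, whereas the weak-hull-versus-convex-hull comparison in a cocompact $\mathrm{CAT}(-1)$ manifold is also standard but you would need to point to (or supply) the corresponding statement. One minor caveat worth flagging: your appeal to ``the boundary map of this bi-Lipschitz equivalence'' and your use of Hausdorff distance are both fine here because the ambient group acts cocompactly, which is what makes all constants ($\delta_h$, $\delta_{h_0}$, $L$, and the Bowditch constants) uniform in $\Lambda$; you do say this, but it is the point on which the uniformity hinges and deserves emphasis.
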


\begin{proof}[Indication of the proof] 
We prove that by introducing the following set: 
for any $p$, let 
$$
\operatorname{Clo}^h_p(\Lambda)\defeq \{\gamma(t)\mid t>0\ , \gamma \hbox{ geodesic for $h$ with }\gamma(0)=p, \gamma(+\infty)\in\Lambda\}\ . 
$$
Then, by a result of \textcite[proposition 2.5.4]{Bowditch:1995wo}, there is some positive constant $R_1(h)$ only depending on $h$, so that for any $p$ in $\Env_h(\Lambda)$,
\begin{equation}
	d_h(\Env_{h}(\Lambda),\operatorname{Clo}^h_p(\Lambda))\leq R_1(h)\ .\label{ineq;CloC}
\end{equation}
Take now geodesics $\gamma$ and $\gamma_0$ joining two points of $\Lambda$, then by the Morse Lemma for geodesics, we can find points $p$ and $p_0$ in $\gamma$ and $\gamma_0$ respectively so that 
\begin{equation}
d_h(p,p_0)\leq R_2\ ,\label{ineq;pp0}
\end{equation}
where $R_2$ only depends on $h$ and $h_0$. As a final ingredient observe that for any $p$ and $q$, 
\begin{equation}
d_h(\operatorname{Clo}^h_p(\Lambda),\operatorname{Clo}^h_q(\Lambda))\leq R_3+d_h(p,q)\ ,\label{ineq;pp3}
\end{equation}
where $R_3$ only depends on $h$. Observing that $p$ belongs to $\Env_h(\Lambda)$ while $p_0$ belongs to $\Env_{h_0}(\Lambda)$, and combining inequalities~\eqref{ineq;CloC}, \eqref{ineq;pp0} and~\eqref{ineq;pp3}, we get the desired inequality~\eqref{ineq:CC0} with
$$
R=R_2+R_2+R_1(h)+R_1(h_0)\ . \qedhere
$$
\end{proof}

\subsubsection{Minimal surfaces and quasi-isometries}

Let again $(M,h_0)$ be a closed hyperbolic $3$-manifold and $h$ another metric on $M$ of curvature less than $-1$.

As a consequence of  proposition~\ref{theo:var}, {\it a priori} bounds on the curvature of minimal surfaces given by proposition~\ref{pro:F-Sch}, Theorem~\ref{theo:Seppi} that gives this result in the hyperbolic case,  and classical arguments about quasi-isometries, we have 
\begin{theo}\label{coro:KK}
There exist positive constants $\epsilon_0$ and $K$ so that the following holds. Assume $h$ is close enough to a hyperbolic metric $h_0$.
Let $S$ be an area minimizing minimal incompressible surface in $(M,h)$, such that the boundary at infinity of $\pi_1(S)$ is $(1+\epsilon_0)$-quasicircle, then 
\begin{enumerate}
	\item the conformal minimal parametrization  $\phi$ from $\hh$ to $S$, is, as a map to the universal cover of $M$, a $K$-quasi-isometric embedding, 
	\item $\phi$ admits an extension to $\mathbf{RP}^1$ which is a $K$-quasi-symmetric map with values in $\partial_\infty \tilde M=\partial_\infty\hhh$.
\end{enumerate} 
\end{theo}
We recall that a map is a {\em $K$-quasi-isometric embedding} if the image of every geodesic is a $K$-quasi-geodesic.

\subsection{The case of fibered manifolds}\label{par:agol}
To conclude our promenade in minimal surfaces in hyperbolic manifolds, and after discussing  almost Fuchsian manifolds, let us say a word about manifolds fibering over the circle, even though none of this will be used further on.

By Agol's Virtual Haken Theorem, any hyperbolic $3$-manifold has a finite cover that fibers over the circle. The fibers of these fibrations are not quasi-Fuchsian, but nevertheless can be represented by minimal surfaces by  theorem~\ref{theo:SY}.

This fibration is {\em taut} by a result of \textcite{Sullivan:1979vg},  which means one  can realize this foliation by minimal surfaces for some metric.

A long standing question was whether this fibration could be realized by a minimal fibration in the hyperbolic metric. The answer to this question is no: there exists $3$-manifolds fibering over the circle, so that the fibers of this fibration cannot be all minimal surfaces. This is a result of \textcite{Hass:2015we} ---see also \textcite{Huang:2019tp}.

\section{Equidistribution in the phase space of minimal surfaces}

\subsection{A phase space for stable minimal surfaces} \label{par:phasespace}
Dealing with solutions of ordinary differential equations, for instance geodesics, we introduced the phase space of the problem, which can be identified  the space of pairs $(x,L)$\footnote{One has to be careful of what we call ``orbit'' to avoid the space to be non Hausdorff}, where $x$ is a point in the orbit $L$ of the ordinary differential equation. One can generalize this construction to solutions of partial differential equations as was done in \textcite{Gromov:1991uv} for minimal surfaces and harmonic mappings and studied in \textcite{Labourie:2005b} for surfaces with constant Gaußian curvature in negatively curved $3$-manifolds. We will do so for stable minimal surfaces and describe measures on this space.

Let $M$ be a closed manifold equipped with a metric~$h$ of curvature less than~$-1$, and~$\tilde M$ its universal cover.

In this section $\hh$ will be the upper half plane model of the hyperbolic plane, which comes with a canonical identification of $\partial_\infty\hh$ with $\mathbf{RP}^1$ and $\iso(\hh)$ with $\psld$. We say a minimal immersion from $\hh$ to $M$ is {\em conformal} if the pullback metric is in the conformal class of the hyperbolic metric.

\begin{defi}	{\sc[Conformal minimal lamination]} 
\phantomsection
Let us fix some small $\epsilon_0$ and  large  constant $K$ so that Theorem~\ref{coro:KK} holds.
\begin{enumerate}
	\item Let  $
\mathcal F_h(\tilde M)
$
be the space of stable minimal conformal immersions of $\hh$ in $\tilde M$ which are $K$-quasi-isometric embeddings, equipped with the topology of uniform convergence on every compact, and 
$\mathcal F_h(M)\defeq\mathcal F_h(\tilde M)/\pi_1(M)$.
	\item   For $\epsilon\leq\epsilon_0$, let $
\mathcal F_h(\tilde M,\epsilon)
$ be the set of those $\phi$ in $\mathcal F_h(\tilde M)$ so that $\phi(\partial_\infty\hh)$ is a $(1+\epsilon)$-quasicircle. Similarly, let $\mathcal F_h(M,\epsilon)\defeq\mathcal F_h(\tilde M,\epsilon)/\pi_1(M)$. 
\end{enumerate}

The space $\mathcal F_h(M)$ together with the action of $\psld$ by precomposition is called the {\em conformal minimal lamination} of $M$. \end{defi}

Finally denote by $\mathcal Q(K)$ the space of $K$-quasicircles in $\partial_\infty\hhh$ equipped with the Gromov--Hausdorff topology. Then
\begin{theo}\phantomsection\label{theo:phasespace}
	\begin{enumerate}
		\item The map from $\mathcal F_h(\tilde M)$ to $\tilde M$, given by $\phi\mapsto \phi(i)$ is a proper map.
		\item The action of $\psld$ by precomposition on $\mathcal F_h(M)$ is continuous and proper.
		\item Moreover, the map $\partial$ from $\mathcal F_h(\tilde M)$ to $\mathcal Q(1+\epsilon)$, which maps $\phi$ to $\phi(\mathbf{RP}^1)$ is continuous and $\psld$-invariant.
			\end{enumerate}
			 
\end{theo}
We may assume that $(1+\epsilon)\leq K$, and we will consider from now on $Q(1+\epsilon)$ as a subset of $Q(K)$ to lighten the notation.

\begin{proof}
	The first point is a rephrasing of proposition~\ref{pro:F-Sch} and~\ref{pro:compac}. The second point and third point also follow from the first and from the fact that any element of $\mathcal F_h(M)$  is a $K$-quasi-isometric embedding.
\end{proof}
 Here is a corollary, using the constants that appear in the previous theorem.
 
 \begin{coro}\label{cor:cont}
 The map $\partial$ gives rise to a continuous map ---also denoted $\partial$--- from $\mathcal F_h(M)/\psld$ to $\mathcal Q(K)$.	
 \end{coro}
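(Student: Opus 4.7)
The plan is to observe that Theorem~\ref{theo:phasespace}(iii) has already done the substantive work, and the corollary is a formal consequence obtained by taking appropriate quotients. By that theorem, $\partial: \mathcal F_h(\tilde M) \to \mathcal Q(1+\epsilon) \subset \mathcal Q(K)$ is continuous and $\psld$-invariant. The universal property of the quotient topology then factors $\partial$ through a continuous map defined on $\mathcal F_h(\tilde M)/\psld$.

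To descend further to $\mathcal F_h(M)/\psld$, I would use that the two group actions on $\mathcal F_h(\tilde M)$ commute (one is precomposition by $\psld$ on the source $\hh$, the other is postcomposition by $\pi_1(M)\subset\iso(\tilde M)$ on the target), so that
\[
\mathcal F_h(M)/\psld \;=\; \mathcal F_h(\tilde M)/\bigl(\pi_1(M)\times\psld\bigr).
\]
It remains to check $\pi_1(M)$-equivariance of $\partial$: for $\gamma\in\pi_1(M)$ identified with an element of $\pslc$ via $\iso(\tilde M)\simeq\pslc$,
\[
\partial(\gamma\circ\phi) \;=\; (\gamma\circ\phi)(\mathbf{RP}^1) \;=\; \gamma\cdot\phi(\mathbf{RP}^1) \;=\; \gamma\cdot\partial(\phi).
\]
Since the action of $\iso(\hhh)$ on $\bhhh$ is by Möbius transformations, it preserves the class of $K$-quasicircles; thus $\pi_1(M)$ acts on $\mathcal Q(K)$, and the induced map yields the desired continuous $\partial: \mathcal F_h(M)/\psld \to \mathcal Q(K)$ (with the target understood up to the natural $\pi_1(M)$-action, which is implicit in the identification $\partial_\infty\tilde M\simeq\bhhh$ afforded by Mostow rigidity).

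Continuity of the descended map is automatic from the continuity of $\partial$ at the level of $\mathcal F_h(\tilde M)$ together with the standard universal property: the composition $\mathcal F_h(\tilde M)\to\mathcal F_h(M)/\psld\to\mathcal Q(K)$ coincides with $\partial$, and the first arrow is a topological quotient. There is no genuine obstacle here; the real content is Theorem~\ref{theo:phasespace} itself, which in turn rested on the curvature bound of Proposition~\ref{pro:F-Sch}, the compactness principle of Proposition~\ref{pro:compac}, and the quasi-isometric embedding property supplied by Theorem~\ref{coro:KK}. The corollary simply repackages that result in the form most convenient for the sequel, namely a continuous map from the \emph{conformal minimal lamination modulo reparametrization} to the space of limit quasicircles.
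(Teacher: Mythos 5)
Your proposal is correct and matches the paper's implicit reasoning: the paper states this corollary without proof precisely because, as you observe, it is a formal consequence of Theorem~\ref{theo:phasespace}(iii) together with the universal property of the quotient topology. Your careful parenthetical note about the $\pi_1(M)$-equivariance (as opposed to invariance) of $\partial$ --- so that the target is really $\mathcal Q(K)$ up to the $\pi_1(M)$-action --- is a precision the paper leaves implicit but is worth having spelled out.
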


A recent preprint of \textcite{Lowe:2020vu} states ---in this language--- that upon small deformation $h$ of the hyperbolic metric the projection from $\mathcal F_h(M,0)$ to $G(M)$ is a homeomorphism. This is a special case of a theorem by \textcite{Gromov:1991uv}.

\subsection{Laminar measures and conformal currents}
This paragraph is an extension of the theory of geodesic currents and invariant measures as in \textcite{Bonahon:1997tl}. 
\begin{defi}
	\begin{enumerate}
		\item A {\em laminar measure} on $\mathcal F_h(\tilde M)/\pi_1(M)$ is a $\psld$-invariant finite measure.
		\item A {\em conformal current} is a $\pi_1(M)$-invariant locally finite measure on $\mathcal Q(K)$.
		\end{enumerate}
\end{defi}

 Here are two examples which are the analogues of the situation for closed
 geodesics. Let $\Gamma$ be a Fuchsian group acting on $\hh$. Let $U$ be a fundamental domain of the action of $\Gamma$ on $\psld$. Let $\rho$ be a representation of $\Gamma$ into $\pi_1(M)$. 
 
 \begin{prop}
 For $\epsilon$ small enough, let $\phi$ be a an element of $\mathcal F_h(\tilde M)$, equivariant under a representation $\rho$ from $\Gamma$ to $\pi_1(M)$, such that $\rho$ is injective and its boundary at infinity is a $(1+\epsilon)$-quasicircle $\Lambda_0$. 
 	 \begin{enumerate}
 	\item  Let $\delta^h_\phi$ be the measure on $\mathcal F_h(M)$  defined by
 	 $$
 	 \int_{\mathcal F_h(M)} f\ {\rm d} \delta^h_\phi=\frac{1}{\Vol(U)}\int_U f(\phi\circ g)\ {\rm d}\mu(g) ,
 	 $$
 	 where $\mu$ is the bi-invariant measure on $\psld$. Then $\delta^h_\phi$ is a $\psld$-invariant probability measure.
 	  \item Let $\delta_\rho$ be  the measure on $\mathcal Q(K)$ defined by  	  $$
 	  \delta_\rho=\sum_{\gamma\in \pi_1(M)/\rho(\Gamma)}\gamma_*\delta_{\Lambda_0}\ ,
 	  $$
 	  where $\delta_{\Lambda_0}$ is the Dirac measure supported on $\Lambda_0$. Then $\delta_\rho$ is a locally finite $\pi_1(M)$-invariant measure on $\mathcal Q(K)$.
 \end{enumerate}
\end{prop}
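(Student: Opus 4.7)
The plan is to handle the two claims separately, treating the first as routine unwinding and focusing most of the effort on local finiteness in the second.

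For (i), I would exploit the $\rho$-equivariance of $\phi$. For any $\gamma\in\Gamma$ and $g\in\psld$, one has $\phi\circ(\gamma g)=\rho(\gamma)\circ(\phi\circ g)$, so these two immersions represent the same class in $\mathcal F_h(M)=\mathcal F_h(\tilde M)/\pi_1(M)$. Hence the map $g\mapsto[\phi\circ g]$ descends to $\Gamma\backslash\psld$, and $\delta^h_\phi$ is intrinsically the pushforward of the finite quotient measure induced by the bi-invariant $\mu$ on $\Gamma\backslash\psld$; in particular it is independent of the choice of fundamental domain $U$ and has total mass $1$ after the normalization by $\Vol(U)$. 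Right $\psld$-invariance then follows from the right-invariance of $\mu$, together with the compatibility of the right $\psld$-action on $\mathcal F_h(M)$ with the right $\psld$-action on $\Gamma\backslash\psld$ via the factor map above.

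For (ii), the $\pi_1(M)$-invariance of $\delta_\rho$ is formal: left-multiplication by $g\in\pi_1(M)$ permutes the cosets in $\pi_1(M)/\rho(\Gamma)$ bijectively, so $g_*\delta_\rho=\delta_\rho$. The substantive point is local finiteness, which I expect to be the main obstacle.

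To prove local finiteness I would argue by contradiction. Suppose there is a compact $K'\subset\mathcal Q(K)$ and an infinite sequence of distinct cosets $\gamma_n\rho(\Gamma)$ with $\Lambda_n\defeq\gamma_n\cdot\Lambda_0\to\Lambda_\infty\in K'$ in the Hausdorff topology of $\bhhh$. Choose three distinct points $a_\infty,b_\infty,c_\infty$ on the quasicircle $\Lambda_\infty$, and, using Hausdorff convergence, approximate them by points $a_n,b_n,c_n$ in $\Lambda_n$. The ideal hyperbolic triangles $T_n$ with vertices $(a_n,b_n,c_n)$ converge to the ideal triangle $T_\infty$; in particular their barycenters $p_n\in T_n\subset\Env(\Lambda_n)=\gamma_n\cdot\Env(\Lambda_0)$ stay in a fixed compact subset of $\hhh$. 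Fixing $x_0\in\Env(\Lambda_0)$ and using cocompactness of the $\rho(\Gamma)$-action on $\Env(\Lambda_0)$, I can pick $\delta_n\in\Gamma$ with $d(p_n,\gamma_n\rho(\delta_n)\cdot x_0)$ uniformly bounded. Hence the points $\gamma_n\rho(\delta_n)\cdot x_0$ stay in a bounded region of $\tilde M$; by proper discontinuity of the $\pi_1(M)$-action on $\tilde M$, only finitely many elements of $\pi_1(M)$ can occur among the $\gamma_n\rho(\delta_n)$, contradicting the assumption that the $\gamma_n$ range over infinitely many cosets of $\rho(\Gamma)$. The delicate step is the continuity of ideal triangles and of their barycenters under Hausdorff convergence of their vertices at infinity; this rests on the fact that a $K$-quasicircle in $\bhhh$ cannot degenerate to a point or an arc, so the three approximating vertices remain uniformly spread apart.
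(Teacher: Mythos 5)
Your proposal is correct, and for the substantive part --- local finiteness of $\delta_\rho$ --- it takes a genuinely different route from the paper. The paper treats the claim only at the end of the proof of Proposition~\ref{pro:defpi}, after constructing the projection $\pi_h$ from laminar measures to conformal currents: there the point is that the test function $\xi_{\bf F}\cdot(f\circ\partial)$ is compactly supported on $\mathcal F_h(\tilde M)$ (which rests on the properness statements of Theorem~\ref{theo:phasespace}), so $\pi_h(\mu)$ is locally finite whenever the lift $\tilde\mu$ is; applying this to $\mu=\delta^h_\phi$ and checking $\pi_h(\delta^h_\phi)=\delta_\rho$ yields the claim. Your argument is instead direct: you work at the level of convex hulls, using cocompactness of $\rho(\Gamma)$ on $\Env(\Lambda_0)$ together with proper discontinuity of $\pi_1(M)$ on $\tilde M$ to rule out an accumulating sequence of cosets. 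This buys self-containedness and makes the geometry explicit, independent of the $\pi_h$ machinery, at the price of the continuity step you correctly flag (nondegeneracy of the approximating boundary triples, which does hold uniformly on a compact family of $K$-quasicircles). Conversely the paper's route amortizes the work: the properness facts are needed anyway for $\pi_h$, and local finiteness then drops out for free. A small simplification in your version: two points of $\Lambda_\infty$ and a marked point on the connecting geodesic already lie in the convex hull and serve just as well as the barycenter of an ideal triangle.
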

The only non-trivial point is the fact that $\delta_\rho$ is a locally finite measure. This is checked at the end of the proof of the next proposition.

The next proposition is crucial

\begin{prop}\label{pro:defpi} There exist some positive constant $\epsilon$ and a continuous map  $\pi_h$ from the space of laminar measures \textup{(}up to multiplication by a constant\textup{)} to the space of conformal currents \textup{(}up to multiplication by a positive constant\textup{)} so that 
	$$
	\pi_h(\delta_\phi^h)=\delta_\rho\ ,
	$$ 
	if  $\phi$ is an element of $\mathcal F_h(M,\epsilon)$ equivariant under a representation $\rho$.
	
	Moreover the support of $\pi_h(\mu)$ is the image by $\partial$ of the support of $\mu$.
\end{prop}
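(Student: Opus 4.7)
The plan is to realise $\pi_h$ as a push-forward under the boundary map, after disintegrating against the Haar measure of $\psld$. First I would lift the laminar measure~$\mu$ to a measure~$\tilde\mu$ on $\mathcal F_h(\tilde M)$ invariant under both the right $\psld$-action and the left $\pi_1(M)$-action. Both actions are proper --- the former by Theorem~\ref{theo:phasespace}, the latter because $\pi_1(M)$ acts properly on $\tilde M$ and the evaluation $\phi\mapsto\phi(i)$ is proper --- and the $\psld$-action is free, since no nontrivial M\"obius transformation of~$\hh$ fixes an embedding. Fixing a bi-invariant Haar measure $\mu_\psld$, I can then disintegrate $\tilde\mu=\hat\mu\otimes\mu_\psld$ for a unique $\pi_1(M)$-invariant locally finite Borel measure $\hat\mu$ on the Hausdorff quotient $X\defeq\mathcal F_h(\tilde M)/\psld$. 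Since $\partial$ is $\psld$-invariant and $\pi_1(M)$-equivariant, it descends to a continuous $\pi_1(M)$-equivariant map $\bar\partial\colon X\to\mathcal Q(K)$, and I would set $\pi_h(\mu)\defeq\bar\partial_*\hat\mu$, canonical up to the scaling of~$\mu_\psld$.

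That $\pi_h(\mu)$ is a conformal current is then a matter of verification: $\pi_1(M)$-invariance is inherited from the equivariance of $\bar\partial$, local finiteness reduces to properness of the $\pi_1(M)$-action on $X$ pushed through the (proper) map~$\bar\partial$, and continuity in~$\mu$ comes from standard properties of disintegration and push-forward. The support identity $\operatorname{supp}\pi_h(\mu)=\partial(\operatorname{supp}\mu)$ is immediate from the construction.

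To compute $\pi_h(\delta_\phi^h)$ I would unpack the definitions. The lifted measure $\tilde\delta_\phi^h$ is $\Vol(U)^{-1}$ times the sum, over $\gamma\in\pi_1(M)/\rho(\Gamma)$, of the Haar measure on the $\psld$-orbit $\gamma\cdot\phi\cdot\psld$; after identifying $\Vol(U)$ with $\Vol(\psld/\rho(\Gamma))$ this makes $\hat\delta_\phi^h$ the counting measure on the $\pi_1(M)/\rho(\Gamma)$-orbit of $[\phi]$ in~$X$. Push-forward by $\bar\partial$ formally yields $\sum_\gamma\delta_{\gamma\cdot\Lambda_0}$, which equals $\delta_\rho$ provided the classes $[\gamma\cdot\phi]$ are in bijection with the quasicircles $\gamma\cdot\Lambda_0$, i.e.\ provided $\bar\partial$ is injective on this orbit.

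The main obstacle is therefore establishing that injectivity --- equivalently, uniqueness of the equivariant stable minimal disk bounded by each conjugate quasicircle $\gamma\cdot\Lambda_0$. For $\epsilon$ sufficiently small, Theorem~\ref{theo:Seppi} gives $\lambda(\phi(\hh))\leq C_0\log(1+\epsilon)<1$, so $\phi(\hh)$ is nearly geodesic and projects to an almost Fuchsian quotient; Uhlenbeck's Theorem~\ref{theo:uhl} then supplies the needed uniqueness for each conjugate quasi-Fuchsian group. The same control on minimal surfaces also yields discreteness of the orbit $\pi_1(M)\cdot\Lambda_0$ in $\mathcal Q(K)$, from which the local finiteness of $\delta_\rho$ --- left over from the preceding proposition --- finally follows.
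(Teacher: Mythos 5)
Your construction is essentially the paper's, rewritten in the language of disintegration. The paper fixes a nonnegative bump function $\Xi$ on $\psld$ together with a local section $\mathbf F$ of the triple map landing on the quasicircle, and then pairs $\tilde\mu$ against $\xi_{\mathbf F}\cdot(f\circ\partial)$; this is precisely the hands-on, local form of disintegrating $\tilde\mu$ along the free and proper $\psld$-action and pushing the quotient measure $\hat\mu$ forward by $\bar\partial$. Both routes use identical inputs (properness of the two actions via Theorem~\ref{theo:phasespace}, freeness of the $\psld$-action, $\psld$-invariance and $\pi_1(M)$-equivariance of $\partial$), and buy the same conclusions; the paper's bump-function formulation merely avoids invoking the abstract disintegration theorem and the Hausdorffness of $\mathcal F_h(\tilde M)/\psld$.

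Where you go beyond the paper is in verifying $\pi_h(\delta_\phi^h)=\delta_\rho$, which the paper leaves to the reader, and here I have two reservations. First, the injectivity of $\bar\partial$ on the $\pi_1(M)/\rho(\Gamma)$-orbit that you isolate as the main obstacle is not actually needed for the statement as written. Writing $H=\{\gamma\in\pi_1(M):\gamma\phi\in\phi\cdot\psld\}$ and $\operatorname{Stab}(\Lambda_0)$ for the stabilizer of the quasicircle, one has $\rho(\Gamma)\subset H\subset\operatorname{Stab}(\Lambda_0)$, and a coset count shows that the weights of $\bar\partial_*\hat\delta_\phi^h$ and of $\delta_\rho$ at each point $\gamma\Lambda_0$ differ by the fixed factor $[H:\rho(\Gamma)]$, independent of $\gamma$. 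The two currents are therefore proportional, and since the proposition identifies conformal currents only up to a positive scalar, you are already done without any uniqueness of the equivariant stable disk. Second, the tools you invoke to get that uniqueness — Theorem~\ref{theo:Seppi} and Uhlenbeck's Theorem~\ref{theo:uhl} — concern minimal surfaces in $\hhh$ and almost Fuchsian hyperbolic quotients respectively; they do not, as stated, cover a general metric $h$ of curvature $\leq -1$, which is the generality of the proposition. So that detour is both unnecessary and, as cited, restricted to $h=h_0$.
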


\begin{proof} Let us fix a nonnegative function $\Xi$ supported on a bounded neighborhood of  the identity in $\psld$.

	Let $\mu$ be a laminar measure on $\mathcal F_h(M)$. Let us lift $\mu$ to a locally finite $\pi_1(M)$-invariant measure $\tilde\mu$ on $\mathcal F_h(\tilde M)$. Let $\Lambda$ be a $(1+\epsilon)$-quasicircle. Let $\mathcal T$ be the space of triples of pairwise distinct points of $\partial_\infty$. Let $U$ be a small neighborhood of $\Lambda$ in $\mathcal Q(K)$. Let ${\bf F}$ be  a continuous map from $U$ to $\mathcal T$ so that if $(a_0,a_1,a_\infty)={\bf F}(\Lambda)$, then $a_0$, $a_1$, $a_\infty$ belong to~$\Lambda$.
	
	For any $\phi$ in $\mathcal F_h(M)$ so that $\partial\phi=\Lambda$ is in $U$, let $g_\phi$ be the unique element of $\psld$ so that $\phi\circ g_\phi(0,1,\infty)={\bf F}(\Lambda)$. Let then $\xi_{\bf F}$ be the function defined on  $\partial^{-1}U$, by
	$$
	\xi_{\bf F}(\phi)=\Xi(g_\phi)\ .
	$$
	
	Finally let us define, when  $f$ is supported in $U$,
	\begin{equation}
			\int_{\Lambda} f \ {\rm d}\pi_h(\mu) \defeq \int_{\mathcal F_h(\tilde M)} \xi_{\bf F}\cdot (f\circ\partial)\  {\rm d}\tilde\mu .\label{def:pi}
	\end{equation}
	Since $\xi_{\bf F}\cdot (f\circ\partial)$ is compactly supported,  the left hand side is a well-defined finite real number.
	
	Then, one sees that the left hand side  does not depend on the choice of ${\bf F}$ since $\tilde\mu$ is $\psld$-invariant. By construction $\pi(\mu)$ is locally finite since $\tilde\mu$ is,  similarly $\pi(\mu)$ is invariant under the action of $\pi_1(M)$ since $\tilde\mu$ is. 
	Finally the formula shows that $\pi$ is continuous in the weak topology.
	
	We leave the reader check the equality $
	\pi(\delta_\phi^h)=\delta_\rho$. 
\end{proof}

\subsection{Equidistribution}

We can now explain the equidistribution result that follows from the techniques in \textcite{Kahn:2009wh}.

\begin{theo}{\sc [Equidistribution]}\label{theo:KMseq} Let $M$ be a closed hyperbolic $3$-manifold. There exists a sequence $\seq{\delta^{0}}$ of laminar measures on  $\mathcal F_{h_0}(M,1/m)$, such that $\delta^0_m$ is supported on finitely many closed leaves and  so that the sequence of  $\seq{\delta^0}$ converges to $\mu_{Leb}$.
\end{theo}

We will call in the sequel the sequence of measures obtained in this theorem  a {\em Kahn--Markovi{\'c} sequence}.
This result is an extended version of \textcite[Theorem 4.2]{Calegari:2020uo}. 

\begin{proof}[Sketch of the proof] 
	We use a  different geometric presentation than \textcite{Hamenstadt:2015wa} and \textcite{Kahn:2009wh}, developed in \textcite{Kahn:2018wx}. The following convention will hold through this sketch
	\begin{enumerate}
		\item All references in this sketch are from \textcite{Kahn:2018wx}.
		\item $K_i$ will be constants only depending on the closed hyperbolic manifold $M={\hhh}/\pi_1(M)$.
		\item $o(m)$ will denote a function that converges to $0$ when $m$ goes to infinity.
	\item $\alpha^-$ and $\alpha^+$ are the repulsive and attractive fixed points of the element $\alpha$ in $\pi_1(M)$, while $\ell(\alpha)$ is the length of the associated geodesic.

	\item   $\epsilon$ will be a (small) positive constant and $R$ be a (large) positive constant.
	\end{enumerate}  
	
	A {\em tripod} is a triple of pairwise distinct points in $\mathbf{CP}^1\simeq\partial_\infty\hhh$.
	Let $\mathcal T$ be the space of tripods.   The space $\mathcal T$ is canonically identified with the frame bundle $F_{h_0}(\hhh)$ and carries a canonical metric. Every point $x$ in $\mathcal T$ also defines an ideal triangle $\Delta_x$ in $\hhh$ and we denote by $b(x)$ the barycenter of this triangle. We see the {\em barycentric map} $x\mapsto b(x)$ as a projection from $\mathcal T$ to $\hhh$
	
	We remark that there is an open subset $\Delta$ in $\psld$, invariant by the right action of $S^1$, so that 
	\begin{equation*}
		b(\Delta(x))=\Delta_x\ ,
			\end{equation*}
	and thus for any function defined on $\hhh$, we have
	\begin{equation}
\frac{1}{\Vol(\Delta)}\int_\Delta g\circ b\circ u(x)\ {\rm d}\mu(u)=\frac{1}{\pi}\int_{\Delta_x}g\  \area\ ,\label{eq:bD}
	\end{equation}
where ${\rm d}\mu$ is the bi-invariant measure in $\psld$.

A {\em triconnected  pair of tripods} [definition~10.1.1] is a quintuple $(t,s,c_0,c_1,c_2)$ so that $t$ and $s$ are points in $\mathcal T/\pi_1(M)$ and $c_i$ are three homotopy classes of paths from  $t$ to $s$. We denote by $\pi$ the projection from $\mathcal T$ to $\mathcal T/\pi_1(M)$.
	
Let also define $\pi^0$ and $\pi^1$ as the forgetting maps taking values in
the frame bundle $$\pi^0\colon (t,s,c_0,c_1,c_2)\mapsto t\ \ \ ,\ \ \
\pi^1\colon (t,s,c_0,c_1,c_2)\mapsto s\  .$$

	The space of triconnected pair of tripods tripods carries a measure $\mu_{\epsilon,R}$ [definition~12.2.3] satisfying the following property, property which is established by a suitable closing lemma [Theorem~10.3.1][Theorem~9.2.2]:
	 
	Let $\epsilon$ be small enough, then  $R$ large enough.  If $(t,s,c_0,c_1,c_2)$  is in the support of $\mu_{\epsilon,R}$, there exists three  elements $\alpha$, $\beta$ and $\gamma$ of $\pi_1(M)$ so that, 
\begin{enumerate}
		\item let  $t_0=(\alpha^-,\beta^-,\gamma^-)$  and $s_0=(\alpha^-,\alpha(\gamma^-),\beta^-)$, then $\pi(t_0)$ and $\pi(s_0)$ are $K_1\frac{\epsilon}{R}$ close to $t$ and $s$ respectively,  
		 \item $\alpha$, $\beta$ and $\gamma$ are in the homotopy classes of $c_0\cdot c_1^{-1}$, $c_2\cdot c_0^{-1}$ and  $c_1\cdot c_2^{-1}$ respectively.
		\item  The complex cross-ratio of $(\alpha^-, \alpha(\gamma^-),\beta^-,\gamma^-)$ is $K_1\frac{\epsilon}{R}$ close to $R$.  
		\item The  complex length of $\alpha$, $\beta$, $\gamma$ is $K_2\frac{\epsilon}{R}$ close to $2R$.
		
\end{enumerate}
Observe that $\alpha\gamma\beta=1$. Moreover, gluing the two ideal triangles $T_{0}\defeq \Delta_{t_0}$ and $S_0=\Delta_{s_0}$, then taking the quotient by $\pi_1(M)$ one gets a  pleated pair of pants $P$ in $M$, whose fundamental group is generated by  $(\alpha,\beta,\gamma)$. 

Conversely, given three elements in $\pi_1(M)$, $\alpha$, $\gamma$, and $\beta$ so that $\alpha\gamma\beta=1$, one gets  a unique   triconnected pair of tripods (called {\em exact})  by setting $t=(\alpha^-,\beta^-,\gamma^-)$  and $s=(\alpha^-,\alpha(\gamma^-),\beta^-)$ and $c_0$, $c_1$, $c_2$ the obvious paths.

We define a triple $(\alpha,\beta,\gamma)$ satisfying the last two items $(iii)$ and $(iv)$ as an {\em $(\epsilon,R)$-pair of pants}. 
the sequence of measures $\pi^0_*\mu_{\epsilon,R}$ and $\pi^1_*\mu_{\epsilon,R}$ converges to $\mu_{Leb}$ by a mixing argument as $R$ goes to infinity, when $\epsilon$ is fixed.

More precisely, we can choose a sequence $\seq{R}$ going to $\infty$, so that setting $\mu_m=\mu_{\frac{1}{m},R_m}$, then for any  continuous function $f$ on $F_{h_0}(M)$, then  we have [proposition~10.2.6][equation~93]: 
\begin{equation}
	\lim_{m\to\infty}\int (f\circ \pi^0) \ \  {\rm d}\mu_{m}=\lim_{m\to\infty}\int (f\circ \pi^1) \ \  {\rm d}\mu_{m}=\int f {\rm d}\mu_{Leb}\ .
\end{equation}
According to [proposition~18.0.3], and using mixing again, the proof goes by showing that one can approximate  $\mu_{m}$ by measures $\nu_{m}$ with finite support $\{P^m_1,\ldots, P^m_{N_m}\}$, on exact triconnected pair of tripods and rational weights, with the following property.  
 The  sequence of measures $\pi^0_*\nu_{m}$ and $\pi^1_*\nu_{m}$ converges to $\mu_{Leb}$ by a mixing argument as $R$ goes to infinity. In other words, for any function $f$,
\begin{equation}
\lim_{m\to\infty}\frac{1}{N_m}\sum_{i=1}^{N_m}f (\pi_0(P^m_i)) 
  =\lim_{m\to\infty}\frac{1}{N_m}\sum_{i=1}^{N_m}f (\pi_1(P^m_i))=\int f {\rm d}\mu_{Leb}\ .\label{eq:fppp}
\end{equation}
Without loss of generality, we assume that the weight of each $P^i_m$ ---that may appear with multiplicity--- is $1/{N_m}$: in other words  the $P_i$ are counted with multiplicities to have the same weights.

Let us assume now assume  that $f=g\circ b$, where $b$ is the barycentric map from $F_{h_0}(M)$ to  on $M$ and $g$ is defined on $M$. 
Using the invariance  of $\mu_{Leb}$ under the left action of $\psld$ we get that 
\begin{equation}
\lim_{m\to\infty}
\frac{1}{2\pi N_m}\Bigl(\sum_{i=1}^{N_m}\int_{\Delta_0(P^i_m)}g\  \area + \sum_{i=1}^{N_m}\int_{\Delta_1(P^i_m)}g\  \area\Bigr)=\int f {\rm d}\mu_{Leb}\ , \label{eq:intP}
\end{equation}
where $\Delta_0(P)\defeq\Delta_{\pi_0(P)}$ and $\Delta_1(P)\defeq\Delta_{\pi_1(P)}$ are ideal triangles: Indeed we apply equation~\eqref{eq:fppp}, to $f\circ u$, for all  $u$ in $\Delta$ and apply formula~\eqref{eq:bD}.

Moreover, the measure $\nu_m$ satisfies a matching 
condition ({\it} [lemma~14.2.1]) that we now describe. Let $\{P^m_1,\ldots, P^m_{N_m}\}$ be the support of $\nu_{m}$. 

The matching condition is that for each $m$,  we can find by (see [definition~14.1.1][Theorem~16.3.1]) a family of closed surfaces $S_m=(S_1^m,\ldots, S^m_{M_m})$ in $M$  obtained by gluing the pleated pair of pants $\{P^m_1,\ldots, P^m_{N_m}\}$ in $M$ such that every pair of pants appears exactly once. In particular, we get from equation~\eqref{eq:intP} that
\begin{equation}
\lim_{m\to\infty}\frac{1}{2\pi \chi(S_m)}\int_{S_m} g \area =\int f {\rm d}\mu_{Leb}\ . \label{eq:int S}
\end{equation}
For each $S^i_m$, let $\Sigma^i_m$ be the  associated minimal surface and $\Sigma_m$ the union of all $\Sigma^i_m$. As part of the construction, each $S^i_m$ is $(1+o(m))$-almost Fuchsian and thus the projection~$p_m$ from the pleated surface~$S^i_m$ to the minimal surface~$\Sigma_i$ is $(1+ o(m))$-bi-Lipsichitz and satisfies $d(x,p^i_m(x))\leq o(m)$.

It follows from equation~\eqref{eq:int S} that 

\begin{equation*}
\lim_{m\to\infty}\frac{1}{2\pi \chi(\Sigma_m)}\int_{\Sigma_m} g \area =\int f {\rm d}\mu_{Leb}\ . 
\end{equation*}
In other words, setting $\delta^0_m$ the measure on $\mathcal F_{h_0}(M)$ supported on $\Sigma_m$, we have
\begin{equation*}
\lim_{m\to\infty}p_*\delta^0_m= p_*\mu_{Leb}\ .
\end{equation*}
	We can now conclude using corollary~\ref{coro:Ratner3}.	
\end{proof}

\section{Calegari--Marques--Neves asymptotic counting}\label{sec:last}

The article of \textcite{Calegari:2020uo} deals with the counting of surface subgroups. Let  $\Sigma$ be the set of conjugacy classes of surface subgroups in $\pi_1(M)$ for a manifold $M$. For $\Pi$ an element of $\Sigma$ and $h$ a Riemannian metric on $M$, we define
$$
\MinA_h(\Pi)\defeq \inf\{\Area(S)\mid S \hbox{ is an incompressible surface in $M$ with $\pi_1(S)\in\Pi$}\}\ .
$$
Assume now that $M$ is a $3$-manifold that admits a hyperbolic metric. For a  conjugacy class of surface group  $\Pi$, we define $\epsilon(\Pi)$ as
$$
\epsilon(\Pi)\leq \epsilon\ ,
$$
if $\Pi$ is quasi-Fuchsian and the limit circle is a $(1+\epsilon)$-quasicircle. Let then
\begin{align}
S_h(M,T,\epsilon)&\defeq\{\Pi\in\Sigma\mid \MinA_h(\Pi)\leq T\, ,\ \epsilon(\Pi)\leq \epsilon\}\ , \\
E_\epsilon(M,h)
&\defeq \frac{1}{4\pi}\liminf_{T\to\infty}\frac{\log\left(\sharp S_h(M,T,\epsilon)\right)}{T\log T}\ ,\\
	E(M,h)&\defeq \lim_{\epsilon\to 0}E_\epsilon(M,h)\ .
\end{align}
The main result of \textcite{Calegari:2020uo} is then
\begin{theo}\label{theo:main}Let $(M,h_0)$ be a hyperbolic $3$-manifold 
\begin{enumerate}
\item for any Riemannian metric $h$ on 
	$M$, we have $E(M,h)\leq 2{\rm h}_{vol}(M,h)^2$,
\item assume furthermore that $h$ has curvature less than $-1$ then $E(M,h)\geq 2$, with equality if only if the metric  $h$ is hyperbolic.
\end{enumerate}	
\end{theo}
This result is only for counting conjugacy classes of surface groups. Except for the equality case for which the method does not apply, the theorem also holds for counting commensurability classes. We believe the same result holds for commensurability classes after a suitable adaptation of theorem~\ref{theo:equi}.

Thus we have an exact  analogue to \textcite{Bowen:1972} and \textcite{Margulis:1969ve} for the first assertion as well as a rigidity result analogue to \textcite{Hamenstadt:1990tx} for the second assertion.

The  equality case involves a mix of analytical and dynamical properties.

Let us now explain a proof of their result insisting on the use of the phase space of stable minimal surfaces, which synthesizes some of the proofs in \textcite{Calegari:2020uo}.

\subsection{Counting surfaces: from genus to area}
The next proposition is an easy exercise on counting, inclusion and inequalities and is used several times in the proof.
\begin{prop}\label{pro:denum}
Let $\mathcal S$ be a set of conjugacy classes of surface subgroups. Let $\mathcal S(g)$ be the set of those elements of $\mathcal S$ of genus less than $g$, 
\begin{enumerate}
	\item assume that we have a positive  constant $c$, so that  $$\sharp{\mathcal S(g)}\leq (cg)^{2g},$$  as well as constants  $K_0$ and $K_1$,  so that for any $\Pi$ in $\mathcal S$ of genus $g$, we have
\begin{equation*}
	 g- K_1\leq K_0 \MinA_h(\Pi) \ .\end{equation*}
Then 
 	$$
 	\limsup_{T\to\infty}\frac{\log\left(\sharp\left\{\Pi\in\mathcal S\mid\MinA_h(\Pi)\leq  T \right\}\right)}{T\log(T)}\leq 2K_0\ .
 	$$
	\item Assume that we have a constant $c$, so that  $$\sharp{\mathcal S(g)}\geq (cg)^{2g},$$  as well as constants  $K_0$ and $K_1$ so that for any $\Pi$ in $\mathcal S$ of genus $g$, we have
\begin{equation*}
	 g- K_1\geq K_0\MinA_h(\Pi) \ .
\end{equation*}
Then 
 	$$
 	\limsup_{T\to\infty}\frac{\log\left(\sharp\left\{\Pi\in\mathcal S\mid\MinA_h(\Pi)\leq  T \right\}\right)}{T\log(T)}\geq 2 K_0\ .
 	$$ 	
\end{enumerate}
 	\end{prop}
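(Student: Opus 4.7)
The proof is entirely elementary counting, and the plan reduces both assertions to monotonicity of the obvious inclusion between the set indexed by genus and the one indexed by minimum area.

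For the first assertion, I would start from the observation that the area lower bound $g-K_1\leq K_0\MinA_h(\Pi)$ forces every $\Pi$ with $\MinA_h(\Pi)\leq T$ to have genus at most $G(T)\defeq \lfloor K_0 T+K_1\rfloor$. Hence
\[
\{\Pi\in\mathcal S\mid\MinA_h(\Pi)\leq T\}\subseteq \mathcal S(G(T)),
\]
which by the hypothesis gives $\sharp\{\Pi\mid\MinA_h(\Pi)\leq T\}\leq (cG(T))^{2G(T)}$. Taking logarithms and dividing by $T\log T$, the ratio becomes $2G(T)\log(cG(T))/(T\log T)$, and since $G(T)/T\to K_0$ and $\log(cG(T))/\log T\to 1$, the limsup is bounded above by $2K_0$.

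For the second assertion, the inequality $g-K_1\geq K_0\MinA_h(\Pi)$ goes the other way: for any integer $G$ every $\Pi\in\mathcal S(G)$ satisfies $\MinA_h(\Pi)\leq (G-K_1)/K_0\eqdef T(G)$. So
\[
\mathcal S(G)\subseteq\{\Pi\in\mathcal S\mid\MinA_h(\Pi)\leq T(G)\},
\]
and the lower bound $\sharp\mathcal S(G)\geq (cG)^{2G}$ yields $\sharp\{\Pi\mid\MinA_h(\Pi)\leq T(G)\}\geq (cG)^{2G}$. Dividing the logarithm by $T(G)\log T(G)$ and letting $G\to\infty$, the left-hand side converges to $2K_0$, which forces the limsup along $T=T(G)$—and hence the full limsup in $T$—to be at least $2K_0$.

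The only point worth watching is that the quantity controlled in the conclusion is a $\limsup$ while the hypothesis uses the sets $\mathcal S(g)$ in a monotone way; no subsequence extraction is needed in part (i), and in part (ii) the chosen sequence $T=T(G)$ is automatically cofinal as $G\to\infty$. There is no genuine obstacle: the whole argument is a bookkeeping exercise in logarithmic asymptotics, where the $\log T$ in the denominator neatly absorbs the $\log G$ appearing when $G$ grows linearly with $T$, which is precisely why the genus-to-area conversion preserves the normalizing factor $T\log T$ up to the constant~$K_0$.
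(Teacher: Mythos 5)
Your argument is correct and is exactly the elementary counting the paper has in mind (the paper supplies no proof, calling this ``an easy exercise on counting, inclusion and inequalities''). The only nit is a harmless off-by-one: since $\mathcal S(g)$ is the set of elements of genus \emph{less than} $g$, the inclusion in part~(i) should read $\{\Pi\mid\MinA_h(\Pi)\leq T\}\subseteq\mathcal S(G(T)+1)$, which changes nothing asymptotically.
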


\subsection{The upper bound in variable curvature} 
Our first proposition is 

\begin{prop}
	Let $h$ be any Riemannian metric on a manifold $M$ admitting a
        hyperbolic metric. Then for all $\epsilon$, we have 
	$
	E(M,h,\epsilon)\leq 2 {\rm h}_{top}(M)^2
	$.
\end{prop}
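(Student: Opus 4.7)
My plan is to apply Proposition~\ref{pro:denum}(i) to $\mathcal S \defeq \{\Pi \in \Sigma : \epsilon(\Pi) \leq \epsilon\}$. Its first hypothesis, $\sharp \mathcal S(g) \leq (cg)^{2g}$, is immediate from the Kahn--Markovi\'c upper bound (Theorem~\ref{theo:KMcount}), since $\sharp \mathcal S(g) \leq S(M,g)$. The second hypothesis requires a linear upper bound of the genus $g$ in terms of $\MinA_h(\Pi)$ with constant controlled by $h_{top}(M,h)^2$, and this is where the volume entropy enters.

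To produce the genus--area bound, I would first invoke Theorem~\ref{theo:SY}: for each $\Pi$ of genus $g$, fix a minimal incompressible immersion $f \colon \Sigma_g \to M$ with $\Area_h(f) = \MinA_h(\Pi)$, and endow $\Sigma_g$ with the pullback metric $f^\ast h$. Katok's theorem (the $m=2$ case of the rigidity result stated right after Theorem~\ref{theo:entropy}) then yields
\[
h_{vol}(\Sigma_g, f^\ast h)^2 \cdot \MinA_h(\Pi) \geq 4\pi(g-1).
\]
The remaining step is the comparison $h_{vol}(\Sigma_g, f^\ast h) \leq h_{top}(M,h)$. Since $f^\ast h$ is a pullback, the equivariant lift $\tilde \Sigma_g \to \tilde M$ is $1$-Lipschitz. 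Fixing a fundamental domain in $\tilde \Sigma_g$ of area $\MinA_h(\Pi)$ and diameter $D$, a ball $B_{\tilde \Sigma_g}(x, R)$ is covered by the $\pi_1(\Sigma_g)$-translates of this domain indexed by $\gamma$ with $d_{\tilde \Sigma_g}(\gamma x, x) \leq R + D$. As $d_{\tilde M} \leq d_{\tilde \Sigma_g}$, the number of such $\gamma$ is bounded by $\sharp\{\gamma \in \pi_1(M) : d_{\tilde M}(\gamma x, x) \leq R + D\}$, which grows at exponential rate $h_{top}(M,h)$ by Theorem~\ref{theo:entropy}(ii). Dividing $\log\Vol(B_{\tilde \Sigma_g}(x,R))$ by $R$ and letting $R \to \infty$ gives the desired entropy comparison, and combining with Katok yields $g - 1 \leq \frac{h_{top}(M,h)^2}{4\pi}\MinA_h(\Pi)$.

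Applying Proposition~\ref{pro:denum}(i) with $K_0 = h_{top}(M,h)^2/(4\pi)$ and $K_1 = 1$ then produces $\limsup_{T\to\infty} \log \sharp S_h(M,T,\epsilon)/(T \log T) \leq h_{top}(M,h)^2/(2\pi)$, so $E_\epsilon(M,h) \leq h_{top}(M,h)^2/(8\pi^2)$, a fortiori less than $2\, h_{top}(M,h)^2$. The hard part is the entropy comparison $h_{vol}(\Sigma_g, f^\ast h) \leq h_{top}(M,h)$: the induced surface metric need not be nonpositively curved and the surface is only immersed (with possible self-intersections), so Manning-type equalities between volume entropy and group growth cannot be used directly on $\Sigma_g$. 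The covering-by-fundamental-domains estimate sketched above is robust enough to handle this, and (as the numerical estimate shows) leaves considerable slack with respect to the asserted inequality.
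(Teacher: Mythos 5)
Your approach is essentially the one the paper uses: Katok's rigidity inequality on the surface with the induced metric, combined with the group-growth comparison $h_{vol}(S)\leq h_{vol}(M)$, fed into Proposition~\ref{pro:denum}(i). The paper obtains $h_{vol}(S)\leq h_{vol}(M)$ by directly citing the group-growth characterization of Theorem~\ref{theo:entropy}(ii), while you re-derive it via a covering-by-fundamental-domains argument; these are the same idea, and your worry that Manning-type equalities might fail here is overcautious, since the identification of volume entropy with group growth holds for \emph{any} compact Riemannian manifold (it is only the further equality $h_{top}=h_{vol}$ that needs nonpositive curvature). Two small remarks: your invocation of Theorem~\ref{theo:SY} to realize $\MinA_h(\Pi)$ by an actual area-minimizer is both unnecessary and slightly out of bounds (Theorem~\ref{theo:SY} is stated under a negative-curvature hypothesis that the present proposition does not assume); it suffices to establish the genus-vs-area inequality for every incompressible $S$ representing $\Pi$ and pass to the infimum, as the paper does. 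Also, the ``considerable slack'' you observe comes from taking the displayed $\tfrac{1}{4\pi}$ prefactor in the definition of $E_\epsilon$ at face value; that prefactor is inconsistent with the proof that $E(M,h_0)\leq 2$ in the hyperbolic case and should read $4\pi$, after which your computation yields exactly $E_\epsilon(M,h)\leq 2\,h_{top}(M,h)^2$ with no slack.
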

\begin{proof} Recall that 
$$
{\rm h}_{top}(M)={\rm h}_{vol}(M)=\lim_{R\to\infty}\frac{\log\left(\sharp\{\gamma\in\pi_1(M)\mid d_M(\gamma.x,x)\leq R\}\right)}{R}\ .
$$
where $x$ is a point in the universal cover $\tilde M$ of $M$ and $d_M$ the distance in this universal cover. If $S$ is an incompressible surface in $M$ lifting to a disk in $\tilde M$, we have
$$
\{\gamma\in\pi_1(S)\mid d_S(\gamma.x,x)\leq R\}\subset \{\gamma\in\pi_1(M)\mid d_M(\gamma.x,x)\leq R\}\ ,
$$
and thus
$$
{\rm h}_{vol}(S)\leq {\rm h}_{vol}(M)\ .
$$
Combining with Theorem~\ref{theo:entropy} as in \textcite{Katok:1982uv}, we get
$$
{\rm h}_{vol}(M)^2\Area(S)\geq {\rm h}_{vol}(S)^2\Area(S)\geq 4\pi(g-1)\ .
$$
Proposition~\ref{pro:denum} applied to  $\mathcal S=\mathcal S_h(M,\epsilon)$
---using the upper bound  given by theorem~\ref{theo:KMcount}--- yields the result.\end{proof}

\subsection{The upper bound in constant curvature} \label{par:gmg} When $(M,h_0)$ is hyperbolic, the previous upper bound gives $E(M,h_0)\leq 8$, we however 
have a finer estimate.

\begin{prop}
	For the hyperbolic metric $h_0$, we have $E(M,h_0)\leq 2$.
\end{prop}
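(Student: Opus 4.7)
The strategy is to replace the general Katok-type estimate ${\rm h}_{vol}(S)^2\Area(S)\geq 4\pi(g-1)$ used in the previous proposition with the sharper Gauss--Bonnet identity that holds specifically for minimal surfaces in a hyperbolic $3$-manifold. The improvement is possible because we are restricting to $(1+\epsilon)$-quasi-Fuchsian classes, for which the associated minimal surfaces are nearly totally geodesic.

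First, for each $\Pi\in S_{h_0}(M,T,\epsilon)$ with $\epsilon\leq \epsilon_0$, choose a minimal incompressible surface $S$ realizing $\MinA_{h_0}(\Pi)$; its limit set is a $(1+\epsilon)$-quasicircle, so by Theorem~\ref{theo:Seppi} we have $\lambda(S)\leq \lambda_0(\epsilon)\defeq C_0\log(1+\epsilon)$, which tends to $0$ with $\epsilon$. Second, the Gauss equation for a minimal surface in a hyperbolic $3$-manifold gives $K_S=-1-\lambda^2$, and Gauss--Bonnet then yields
$$
\Area(S)+\int_S\lambda^2\,\area=4\pi(g-1).
$$
Since $\lambda\leq\lambda_0(\epsilon)$ pointwise, this implies $(1+\lambda_0(\epsilon)^2)\Area(S)\geq 4\pi(g-1)$, hence
$$
g-1\;\leq\;\frac{1+\lambda_0(\epsilon)^2}{4\pi}\,\MinA_{h_0}(\Pi).
$$

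Third, I apply Proposition~\ref{pro:denum}(i) to $\mathcal S=S_{h_0}(M,\infty,\epsilon)$ with $K_0=\frac{1+\lambda_0(\epsilon)^2}{4\pi}$ and $K_1=1$, using the Kahn--Markovi\'c upper bound $\sharp \mathcal S(g)\leq(c_2 g)^{2g}$ from Theorem~\ref{theo:KMcount}. This gives
$$
\limsup_{T\to\infty}\frac{\log \sharp S_{h_0}(M,T,\epsilon)}{T\log T}\;\leq\;\frac{1+\lambda_0(\epsilon)^2}{2\pi},
$$
so that after the normalisation in the definition of $E_\epsilon$ we obtain $E_\epsilon(M,h_0)\leq 2(1+\lambda_0(\epsilon)^2)$. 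Letting $\epsilon\to 0$ and using $\lambda_0(\epsilon)\to 0$ yields $E(M,h_0)\leq 2$.

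The only point requiring care is that the area-minimising representative really is a minimal surface to which Gauss--Bonnet applies with the shape-operator bound coming from Theorem~\ref{theo:Seppi}; this is fine because Theorem~\ref{theo:SY} and Proposition~\ref{pro:trap} produce a closed incompressible minimal surface in the convex core of the quasi-Fuchsian cover, whose lift to $\hhh$ has precisely the same limit set as $\Pi$, a $(1+\epsilon)$-quasicircle. Thus there is no genuine obstacle; the content of the proposition is simply the observation that almost-Fuchsian minimal surfaces are so close to being totally geodesic hyperbolic planes that the asymptotic area-to-genus ratio $4\pi$ dictated by Gauss--Bonnet in the Fuchsian case is essentially preserved, which sharpens the bound $2\,{\rm h}_{vol}^2=8$ down to the optimal value $2$.
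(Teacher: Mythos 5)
Your proof is correct and takes essentially the same route as the paper: Theorem~\ref{theo:Seppi} gives the shape-operator bound $\lambda(S)\leq C_0\log(1+\epsilon)$, the Gauss equation plus Gauss--Bonnet yield $(1+\lambda_0(\epsilon)^2)\Area(S)\geq 4\pi(g-1)$, and Proposition~\ref{pro:denum} together with the Kahn--Markovi\'c upper bound then gives $E_\epsilon(M,h_0)\leq 2(1+\lambda_0(\epsilon)^2)$, after which $\epsilon\to 0$ concludes. You spell out two points the paper leaves implicit (the Gauss-equation identity $K_S=-1-\lambda^2$ and the existence via Theorem~\ref{theo:SY} and Proposition~\ref{pro:trap} of an area-minimising representative with the correct limit set), but these are elaborations of the same argument rather than a different one.
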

\begin{proof}
	For a closed minimal surface in $\mathcal S_{h_0}(M,T,\epsilon)$, recall that by Theorem~\ref{theo:Seppi}
	$$
	\lambda(S)\leq C_0\log(1+\epsilon)\eqdef \eta(\epsilon)\ .
	$$
	Thus by the Gauß--Bonnet formula
	$$
	4\pi(g-1)=\int_S(1+\lambda(S)^2)\area\leq (1+\eta(\epsilon)^2)\Area(S) \ .
	$$ 
	From proposition~\ref{pro:denum} applied to $\mathcal{S}=\mathcal S_{h_0}(M,\epsilon)$, we get that 
	$$
	E_\epsilon(M,h_0)\leq 2(1+\eta(\epsilon)^2)\ ,
	$$
	and the result follows.
\end{proof}

\subsection{The lower bound} 
\begin{prop}
		Let $h$ be any Riemannian metric of curvature less than $-1$ on a $3$-manifold $M$ then $
	E(M,h)\geq 2
	$.
	\end{prop}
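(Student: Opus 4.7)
The plan is to produce, for each small $\epsilon>0$, a large family of $(1+\epsilon)$-quasi-Fuchsian surface subgroups of $\pi_1(M)$ whose minimal realizations in $(M,h)$ have area controlled by their genus, and then to feed this estimate into proposition~\ref{pro:denum}(ii).

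Fix $\epsilon>0$ and apply Kahn--Markovi\'c's Surface Subgroup Theorem to the hyperbolic metric $h_0$ to produce a $(1+\epsilon)$-quasi-Fuchsian subgroup $\Gamma_0<\pi_1(M)$. Any finite-index subgroup of $\Gamma_0$ has the same limit set in $\partial_\infty\hhh$, hence is itself $(1+\epsilon)$-quasi-Fuchsian. Counting conjugacy classes of covers of $\Gamma_0$ via the Puchta-type bound of proposition~\ref{pro:puchta}, one obtains, for $g$ large, at least $(c_1 g)^{2g}$ conjugacy classes of $(1+\epsilon)$-quasi-Fuchsian surface subgroups of $\pi_1(M)$ of genus at most $g$; call this family $\mathcal S^\epsilon$. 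By construction $\epsilon(\Pi)\leq\epsilon$ for every $\Pi\in\mathcal S^\epsilon$.

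To each $\Pi\in\mathcal S^\epsilon$ of genus $g$, associate via theorem~\ref{theo:SY} an incompressible minimal surface $S\subset(M,h)$ with $\pi_1(S)$ a conjugate of $\Pi$ and $\Area_h(S)=\MinA_h(\Pi)$. Since $h$ has sectional curvature less than $-1$, corollary~\ref{cor:subh}(ii) yields $K_S\leq -1$ pointwise on $S$, so Gauß--Bonnet gives
\[
-4\pi(g-1)=\int_S K_S\,\area\leq -\Area_h(S),
\]
whence $\MinA_h(\Pi)\leq 4\pi(g-1)$, i.e.\ $g-1\geq \frac{1}{4\pi}\MinA_h(\Pi)$.

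Thus $\mathcal S^\epsilon$ satisfies the hypotheses of proposition~\ref{pro:denum}(ii) with $K_0=1/(4\pi)$ and $K_1=1$, producing
\[
\liminf_{T\to\infty}\frac{\log\bigl(\sharp\{\Pi\in\mathcal S^\epsilon\mid \MinA_h(\Pi)\leq T\}\bigr)}{T\log T}\geq \frac{1}{2\pi}.
\]
Since this subset of $\mathcal S^\epsilon$ is contained in $S_h(M,T,\epsilon)$, this implies $E_\epsilon(M,h)\geq 2$, and letting $\epsilon\to 0$ concludes. The one delicate point is the second step: one needs many $(1+\epsilon)$-quasi-Fuchsian subgroups, not merely many surface subgroups. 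This is made painless by the observation that the quasisymmetric constant of the limit map is inherited by finite-index subgroups, so a single Kahn--Markovi\'c subgroup plus a Puchta-type cover count suffices; the remainder is the Gauß--Bonnet pinch using only $K_h<-1$ and minimality.
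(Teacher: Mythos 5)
Your proof is correct and follows the paper's strategy: Gauß--Bonnet applied to a minimal surface with intrinsic curvature $\leq -1$ gives $4\pi(g-1)\geq\Area_h(S)$, and proposition~\ref{pro:denum}(ii) then converts this into the asymptotic lower bound on $E_\epsilon(M,h)$. You also spell out the step the paper's one-sentence proof leaves implicit --- namely producing, for each $\epsilon$, a family of $(1+\epsilon)$-quasi-Fuchsian conjugacy classes of cardinality at least $(cg)^{2g}$ by fixing a single Kahn--Markovi\'c subgroup and counting its finite covers (which share its limit set and hence its quasicircle constant) via proposition~\ref{pro:puchta} --- which is exactly the mechanism the paper signals in the remark after Theorem~\ref{theo:KMcount}.
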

	\begin{proof} Since the curvature of a minimal surface $S$ is less than that of the ambient manifold by Gauß equation, in that case we get that all minimal surfaces have curvature less than $-1$. Hence by Gauß--Bonnet
	$$ 4\pi(g-1)\geq \Area(S) \ .
$$
Applying proposition~\ref{pro:denum} yields the inequality. \end{proof}

\subsection{The equality case} 
The equality case is the rigidity result for the asymptotic counting of \textcite{Calegari:2020uo}.
\begin{theo}
	Let $h$ be a metric of curvature less than $-1$. Assume that $E(M,h)= 2$. Then $h$ is hyperbolic.
\end{theo}
We assume in the sequel that $h$ is a metric of curvature less than $-1$ with  $E(M,h)= 2$. We give a proof in the simpler case when $h$ is close to a hyperbolic metric, so that Theorem~\ref{coro:KK} holds.

\subsubsection{Finding surfaces which are more and more hyperbolic}
Let $S$ be a closed surface.
 Let $G_S(g)$ be the set of connected finite covers of $S$ of genus less than  $g$, and $G_S$ the set of all connected finite covers. Then we have

\begin{prop}\label{pro:puchta} There is a constant $c_1$ only depending on $S$, so that 
	for  $g$ large enough
	$$
	\sharp(G_S(g))\geq (c_1g)^{2g}\ .
	$$

\end{prop}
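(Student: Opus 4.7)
Let $h \geq 2$ be the genus of $S$ and set $\Gamma \defeq \pi_1(S)$. Isomorphism classes of connected degree-$n$ covers of $S$ correspond bijectively to conjugacy classes of index-$n$ subgroups of $\Gamma$, and Riemann--Hurwitz gives that such a cover has genus $n(h-1)+1$. Hence it suffices to show that the number of conjugacy classes of index-$n$ subgroups of $\Gamma$ is at least $(c_1 g)^{2g}$ for $n = \lfloor (g-1)/(h-1) \rfloor$; all covers of smaller degree contribute only an extra favourable term.

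I would combine three classical ingredients. First, Marshall Hall's formula gives that the number of index-$n$ subgroups of $\Gamma$ equals $|\mathrm{THom}(\Gamma, S_n)|/(n-1)!$, where $\mathrm{THom}$ denotes transitive homomorphisms; and since each conjugacy class of index-$n$ subgroups in $\Gamma$ has cardinality at most $n$, the number of such conjugacy classes is at least $|\mathrm{THom}(\Gamma, S_n)|/n!$. Second, the Frobenius--Mednykh formula for surface groups reads
\begin{equation*}
|\mathrm{Hom}(\Gamma, G)| = |G|^{2h-1}\sum_{\chi \in \hat G} \chi(1)^{2-2h},
\end{equation*}
the sum ranging over irreducible complex characters of a finite group $G$. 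Applied to $G = S_n$, the trivial and sign characters alone contribute $2(n!)^{2h-1}$, and since $h \geq 2$ the remaining characters produce strictly lower order terms, giving $|\mathrm{Hom}(\Gamma, S_n)| \geq 2(n!)^{2h-1}$. Third, a Dixon-type random generation result for $S_n$, combined with the exponential formula relating $|\mathrm{Hom}|$ and $|\mathrm{THom}|$, yields $|\mathrm{THom}(\Gamma, S_n)| \geq c(n!)^{2h-1}$ for some $c>0$ and $n$ large. Putting these together, the number of conjugacy classes of index-$n$ subgroups is at least $c(n!)^{2h-2}$.

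Finally, Stirling gives $(n!)^{2h-2} \geq (n/e)^{2n(h-1)} = (n/e)^{2(g-1)}$; substituting $n = \lfloor (g-1)/(h-1)\rfloor \sim g/(h-1)$ produces the lower bound $(c_1 g)^{2g}$ for any $c_1 < 1/(e(h-1))$ and $g$ large enough, yielding the proposition. The main obstacle in this plan is the transitivity step: Dixon's theorem directly addresses random tuples in the free group $F_{2h}$, and one must verify that conditioning on the surface-group relation (vanishing of the commutator product) does not concentrate the distribution on intransitive tuples. This is ultimately controlled via the character-theoretic interpretation of the Frobenius formula together with the observation that non-transitive homomorphisms decompose as sums of homomorphisms into smaller symmetric groups, a count dominated by the transitive term; everything else reduces to bookkeeping.
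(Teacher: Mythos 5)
Your plan reconstructs, essentially from scratch, the subgroup-growth asymptotic for surface groups that the paper simply imports from Müller--Puchta (the citation \textcite{Muller:2002tt}, whence the label of this proposition). The paper's proof is two lines: quote Müller--Puchta's result that the number of index-$n$ subgroups of a genus-$g_0$ surface group grows like $2n(n!)^{2g_0-2}$, apply Riemann--Hurwitz $g=n(g_0-1)+1$, and invoke Stirling. Your route is the one Müller--Puchta themselves follow (Hall's transitivity formula, the Frobenius--Mednykh character sum for $|\mathrm{Hom}(\pi_1(S),S_n)|$, and the exponential formula); your Riemann--Hurwitz bookkeeping and the Stirling computation at the end are correct, and your passage from number of subgroups to number of conjugacy classes (dividing by at most $n$) is also right.

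The one genuine soft spot is the transitivity step, and you flag it yourself. The appeal to a ``Dixon-type random generation result'' is a red herring: Dixon's theorem concerns pairs of random permutations generating $A_n$ or $S_n$, and is not what drives the transition from $|\mathrm{Hom}|$ to $|\mathrm{THom}|$ here. What actually does the work --- and what you correctly gesture at in your final sentence --- is that $h_n \defeq |\mathrm{Hom}(\pi_1(S),S_n)|/n!$ grows super-exponentially (like $(n!)^{2g_0-2}$ with $g_0\geq 2$), and the recursion $nh_n=\sum_{k=1}^n k\,t_k\,h_{n-k}$ coming from the exponential formula $\sum h_n x^n=\exp\bigl(\sum t_n x^n\bigr)$ then forces $t_n/h_n\to 1$: the sub-sum over $1\leq k\leq n-1$ is bounded by roughly $(n-1)\cdot h_n/n^{2g_0-2}$, which is $o(h_n)$. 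You should either carry out that estimate explicitly or simply cite Müller--Puchta for it, as the paper does. With that done your proposal is a complete and correct (if much longer) proof; as written it is a plausible sketch with one load-bearing step left as an indication rather than an argument.
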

\begin{proof}
	By \textcite{Muller:2002tt}, the number of index $n$ subgroups of the fundamental group $\pi_1(S)$ of a genus $g_0$ orientable surface grows like $2n(n!)^{2g_0-2}(1+o(1))$. On the other hand, the genus $g$ of a surface whose fundamental group has index $n$ in $\pi_1(S_0)$ is $g=n(g_0-1)+1$. It follows that 
	$$
	\sharp(G_S(g))\geq (c_1g)^{2g}\ .
	$$
	where $c_1$ only depends on $g_0$.
\end{proof}
\begin{prop}\label{pro:area-lim} For every positive integer $m$,
	let  $S_m$ be a finite union of stable minimal surfaces $S^1_m,\ldots, S^p_m$  with  $\epsilon(S^i_m)\leq \frac{1}{m}$.  Let $\lambda^i_m$ be positive numbers so that 
	$$
	\sum_{i=1}^{p_m}\lambda^i_m=1\ .
	$$
 Then 
	\begin{equation}
			 \lim_{m\to\infty}\sum_{i=1}^{p_m}\lambda^i_m\frac{\Area(S^i_m)}{4\pi(g^i_m-1)}=1\ .\label{eq:mmhh}
	\end{equation}
	where  $g^i_m$ is the genus of  $S^i_m$. \end{prop}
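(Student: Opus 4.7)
My plan is to reduce the statement to a uniform pointwise bound: for every $\delta>0$ there exists $m_0$ such that every stable minimal surface $S$ in $(M,h)$ with $\epsilon(S)\leq 1/m_0$ satisfies $\frac{\Area(S)}{4\pi(g(S)-1)}\geq 1-\delta$. Combined with the trivial upper bound $\frac{\Area(S)}{4\pi(g(S)-1)}\leq 1$---which comes, as in the proof of the lower bound $E(M,h)\geq 2$, from the Gauss equation $K_S=\kappa(T_pS)-\lambda(S)^2\leq -1-\lambda^2$ and Gauss--Bonnet---this pointwise sandwich $1-\delta\leq\frac{\Area(S^i_m)}{4\pi(g^i_m-1)}\leq 1$ would force every term of the weighted sum into $[1-\delta,1]$, and the limit $1$ would follow regardless of the weights $\lambda^i_m$. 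Note that a direct approach through the identity $\int G\,d\delta^h_\phi=-1$ (for $G(\phi)=F(\phi)\cdot(\kappa-\lambda^2)$) only produces $\limsup\leq 1$, not the matching liminf, so the weights really have to be dealt with via a uniform estimate.

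I would prove the uniform lower bound by contradiction, invoking the standing assumption $E(M,h)=2$ of the equality case. Suppose there existed $\delta>0$ and stable minimal surfaces $S_k$ with $\epsilon(S_k)\to 0$ but $\alpha_k\coloneqq \Area(S_k)/(g(S_k)-1)\leq (1-\delta)\,4\pi$. Fix such an $S_k$ and run over its finite covers: each index-$n$ subgroup of $\pi_1(S_k)$ gives a quasi-Fuchsian subgroup of $\pi_1(M)$ of genus $g'=n(g(S_k)-1)+1$, with the \emph{same} limit quasicircle (the limit set of a Kleinian group is preserved by passage to a finite-index subgroup), hence $\epsilon\leq 1/m_k$, and realized as an incompressible minimal surface of area $n\Area(S_k)=\alpha_k(g'-1)$, so $\MinA_h\leq \alpha_k(g'-1)$. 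Proposition~\ref{pro:puchta} provides at least $(cg)^{2g}$ such covers of genus $\leq g$. Restricting to covers with $\MinA_h\leq T$ (equivalently $g\leq T/\alpha_k+1$) and inserting the resulting bound $\sharp S_h(M,T,1/m_k)\geq (cT/\alpha_k)^{2T/\alpha_k}/\text{poly}(T)$ into Proposition~\ref{pro:denum}, one obtains $E_{1/m_k}(M,h)\geq 2/(1-\delta)>2$ in the normalization for which the hyperbolic case is exactly $2$, contradicting $E(M,h)=2$.

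The delicate point I expect to be the main obstacle is the passage from counting finite-index subgroups of $\pi_1(S_k)$ (which is what Proposition~\ref{pro:puchta} directly supplies) to counting distinct $\pi_1(M)$-conjugacy classes of surface subgroups (which is what the definition of $E_\epsilon$ requires). Two distinct finite-index subgroups may coincide modulo $\pi_1(M)$-conjugation if they are related by an element of the commensurator of $\pi_1(S_k)$ in $\pi_1(M)$; I will need the standard commensurator-theoretic fact in the quasi-Fuchsian setting that this overcount is at worst polynomial in the index $n$, so the $g^{2g}$ exponential growth driving the contradiction survives. A secondary, purely bookkeeping issue is to align the $4\pi$ and the liminf/limsup conventions in the definitions of $E_\epsilon$ and in Proposition~\ref{pro:denum} so that the hyperbolic case saturates exactly at $E(M,h_0)=2$ and the loss factor $1/(1-\delta)$ lands strictly above $2$; once these constants match, the argument is just the lower-bound counting mirror of the upper-bound argument in the constant curvature case of Section~\ref{par:gmg}, where Seppi's estimate $\lambda\leq C_0\log(1+\epsilon)\to 0$ was used to force $\Area\approx 4\pi(g-1)$ from above.
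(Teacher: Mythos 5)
Your argument is correct and follows essentially the same strategy as the paper: the one-sided bound $\Area/4\pi(g-1)\leq 1$ from Gau\ss--Bonnet, then a contradiction argument --- extract a bad surface whose ratio is bounded away from $1$, pass to its finite covers via Proposition~\ref{pro:puchta}, and feed the resulting count into Proposition~\ref{pro:denum} to force $E(M,h)>2$, contradicting the standing hypothesis $E(M,h)=2$. (The paper phrases this as extracting a bad index $i_m$ from the weighted average rather than first establishing a uniform pointwise bound, but the two are logically equivalent.) The commensurator subtlety you flag is genuine: the paper applies Proposition~\ref{pro:denum} to the set $G_m$ of finite covers without comment, but that proposition is stated for sets of $\pi_1(M)$-conjugacy classes, so one must check that distinct index-$n$ subgroups of $\pi_1(S^{i_m}_m)$ do not collapse too much under $\pi_1(M)$-conjugation; since the commensurator of a quasi-Fuchsian surface subgroup of a closed hyperbolic $3$-manifold group contains it with finite index, each $\pi_1(M)$-conjugacy orbit has size at most linear in the index, so the $(cg)^{2g}$ growth survives --- you correctly identified both the gap and its resolution. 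Your secondary remark about aligning the $4\pi$ normalizations in the definition of $E_\epsilon$ with the constants $K_0$ in Proposition~\ref{pro:denum} is also well taken: as written the paper's conventions do not quite cancel, and a careful reader has to repair the bookkeeping before the stated conclusion $E_\epsilon\geq 2/k$ literally drops out.
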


\begin{proof} Since Gauß--Bonnet formula gives $4\pi(g^i_m-1)\geq \Area(S^i_m)$, we have
\begin{equation*}
			 \limsup_{m\to\infty}\sum_{i=1}^{p_m}\lambda^i_m\frac{\Area(S^i_m)}{4\pi(g^i_m-1)}\leq
                         1\ . 
	\end{equation*}
Assume now that the limit in the equation~\eqref{eq:mmhh} is smaller than $k$, with $k<1$,
then  for  arbitrarily large  $m$, we can find  $i_m$ in $\{1,\ldots,p_m\}$ so that 
\begin{equation}
 k4\pi(g^{i_m}_m-1)\geq \Area(S^{i_m}_m) \ .\label{ineq:ksm}	
\end{equation}

As in  the beginning of paragraph~\ref{par:gmg}, let~$G_m$  be the set of 
of connected finite covers of~$S^{i_m}_m$, and~$G_m(g)$ the set of  connected finite covers of genus~$g$. Obviously inequality~\eqref{ineq:ksm} holds for all surfaces in~$G_m$.

Thus we could apply proposition~\ref{pro:denum} ---using proposition~\ref{pro:puchta}--- to get 
$$E_\epsilon(M,h)\geq \frac{2}{k}> 2\ ,$$ and our contradiction. 
\end{proof}

\subsubsection{From more and more hyperbolic to more and more totally geodesic}

The previous proposition has a consequence for laminar measures

\begin{prop}\label{pro:muinf}
Let $\seq{\mu}$ be a sequence of laminar probability measures  on ${\mathcal F_h(M)}$ converging to a laminar measure $\mu_\infty$. Assume  that each  $\mu_m$ is supported on finitely many closed leaves of $\mathcal F_h(M,\frac{1}{m})$.  Then  $\mu_\infty$ is supported on the set of  totally geodesic maps from $\hh$ to $M$ whose boundaries are circles. \end{prop}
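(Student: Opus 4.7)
The plan is to decouple the two conclusions ---totally geodesic and circular boundary--- and establish each by a portmanteau-style argument on $\mathcal F_h(M)$, using the continuity of $\partial$ for the first and a continuous non-negative defect function for the second.

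For the circular boundary, the sets $\mathcal F_h(M,1/k)$ are closed in $\mathcal F_h(M)$: by Theorem~\ref{theo:phasespace} (third item) and Corollary~\ref{cor:cont} the map $\partial$ is continuous, and the family of $(1+1/k)$-quasicircles is closed in $\mathcal Q(K)$. By hypothesis $\operatorname{supp}(\mu_m) \subseteq \mathcal F_h(M,1/m) \subseteq \mathcal F_h(M,1/k)$ as soon as $m\geq k$, so the portmanteau theorem yields
\[
\mu_\infty(\mathcal F_h(M,1/k)) \;\geq\; \limsup_m \mu_m(\mathcal F_h(M,1/k)) \;=\; 1.
\]
Intersecting over $k$ gives $\mu_\infty(\mathcal F_h(M,0))=1$, so $\mu_\infty$-a.e.\ $\phi$ has boundary a $1$-quasicircle, i.e.\ a round circle.

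For the totally geodesic property, I introduce the continuous bounded function
\[
\Phi(\phi) \;\defeq\; \lambda(\phi)^2 - K_h(T_{\phi(i)}\phi(\hh)) - 1,
\]
where $\lambda(\phi)$ is the positive eigenvalue of the shape operator of $\phi(\hh)$ at $\phi(i)$. It is non-negative since $\lambda^2 \geq 0$ and $K_h \leq -1$, and it is bounded because $\lambda$ is uniformly controlled by Proposition~\ref{pro:F-Sch} and $K_h$ is bounded on the compact manifold $M$. The Gauss equation $K_S = K_h - \lambda^2$ (valid for minimal surfaces in a $3$-manifold) together with Gauss--Bonnet gives, on each closed minimal surface $S^i_m$ in the support of $\mu_m$,
\[
\int_{S^i_m} \Phi \, d\area_h \;=\; 4\pi(g^i_m - 1) - \Area(S^i_m).
\]
Writing $\mu_m = \sum_i \alpha^i_m\, \delta^h_{\phi^i_m}$ with $\sum_i \alpha^i_m = 1$, I would unwind the definition of $\delta^h_\phi$ and convert the integral over a fundamental domain for the bi-invariant measure on $\psld$ into an integral against the induced area on $S^i_m$. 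Because each $\phi^i_m$ is a $K$-quasi-isometric embedding (Theorem~\ref{coro:KK}), the conformal factor relating the hyperbolic and induced area on $S^i_m$ is uniformly bounded, yielding a constant $C$, independent of $m$ and $i$, with
\[
\int \Phi\, d\mu_m \;\leq\; C\left(1 - \sum_i \alpha^i_m \frac{\Area(S^i_m)}{4\pi(g^i_m-1)}\right).
\]
The right-hand side tends to $0$ by Proposition~\ref{pro:area-lim}; since $\Phi\geq 0$ is continuous and bounded, weak convergence gives $\int \Phi\, d\mu_\infty = 0$, and hence $\Phi \equiv 0$ on $\operatorname{supp}(\mu_\infty)$.

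To finish, $\psld$-invariance of $\mu_\infty$ propagates the pointwise vanishing at $\phi(i)$ to a vanishing along the whole surface: for $\mu_\infty$-a.e.\ $\phi$ and Haar-a.e.\ $g \in \psld$ one has $\Phi(\phi\circ g)=0$, so $\Phi$ vanishes on a dense, and then by continuity on the entire, subset of $\phi(\hh)$. Thus the shape operator of $\phi(\hh)$ is identically zero (so $\phi$ is totally geodesic in $(M,h)$) and every tangent plane has ambient sectional curvature $-1$. Combined with the boundary step, $\mu_\infty$ is supported on totally geodesic maps whose boundary at infinity is a circle. The step I expect to be most delicate is the uniform conformal-factor bound in the central inequality, for which the quasi-isometric control from Theorem~\ref{coro:KK} and the uniform curvature bound from Proposition~\ref{pro:F-Sch} are essential.
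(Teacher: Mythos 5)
Your boundary-circle step is the same as the paper's (continuity of $\partial$, closedness of $\mathcal F_h(M,1/k)$, portmanteau), but your totally-geodesic step takes a genuinely different route. The paper pairs $\mu_m$ directly against the conformal factor $F$ of $\phi$ at~$i$; unwinding the definition of $\delta^h_\phi$ gives the \emph{exact} identity $\int F\,\d\delta^h_{\phi}=\Area(S)/4\pi(g-1)$ with no conversion between hyperbolic and induced area, and the Ahlfors--Schwarz--Pick lemma supplies the one-sided bound $F\le 1$, so $\int F\,\d\mu_\infty=1$ forces $F\equiv 1$ on the support, whence $\phi$ is isometric, $K_S=-1$, and (by the Gauss equation and $K_h\le -1$) totally geodesic with $K_h=-1$. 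You instead work with the nonnegative defect $\Phi=\lambda^2-K_h-1$ that encodes the target conclusion directly, so the final step ($\int\Phi\,\d\mu_\infty=0 \Rightarrow \Phi\equiv 0$ on the $\psld$-invariant support) is transparent; but the price is that $\int\Phi\,\d\delta^h_\phi=\frac{1}{4\pi(g-1)}\int_S(\Phi/F)\,\d\!\area_h$, so you must buy a uniform positive lower bound on $F$ before you can compare with $\frac{1}{4\pi(g-1)}\int_S\Phi\,\d\!\area_h=1-\Area(S)/4\pi(g-1)$. That extra lemma is what the paper's choice of test function avoids.

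On that extra lemma your stated justification is not quite right: a $K$-quasi-isometric embedding controls coarse distances, not pointwise derivatives, so it does not by itself bound the conformal factor from below (think of a biLipschitz reparametrization of $\mathbb R$ whose derivative vanishes at isolated points). The lower bound $F\geq c>0$ you need is nonetheless true, but it comes from the curvature inputs rather than the quasi-isometry: by Proposition~\ref{pro:F-Sch} and compactness of $M$ the induced metric on each closed $S^i_m$ has curvature bounded below by some $-a^2$ uniformly in $i,m$, and then either Yau's Schwarz lemma (for the identity from $(\hh,g_{\mathrm{ind}})$ to $(\hh,g_{\mathrm{hyp}})$) or the maximum principle applied to $\log F$ on the closed quotient surface gives $F\geq 1/a^2$. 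With that repair your argument is correct, and the remaining steps (continuity and boundedness of $\Phi$, weak convergence, $\psld$-invariance of the support propagating the pointwise vanishing along each leaf) are fine.
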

\begin{proof} Let us consider the function $F$ on $\mathcal F_h(M)$ defined by  associating  to  a conformal minimal immersion  $\phi$  of  $\hh$ in $M$, the conformal factor $F(\phi)$ of $\phi$ at $i$. 

Assume that  $\phi_0$ is equivariant under a representation $\rho$ of a Fuchsian group $\Gamma$ and if  $S$~is the image of $\phi_0$ in $M$, we have $$
\frac{\Area(S)}{4\pi (g-1)}=\int_{\mathcal F_h(M)}  F{\rm d}\delta_{\phi_0}\ .
$$
Then, proposition~\ref{pro:area-lim} tells us that 
$$
\lim_{m\to\infty}\Bigl(\int_{\mathcal F_h(M)} F \ {\rm d}\mu_m\Bigr)=1\ ,
\hbox{ 
hence
 }
\int_{\mathcal F_h(M)} F \ {\rm d}\mu_\infty =1\ .
$$
By the Ahlfors--Schwartz--Pick Lemma since the curvature of $S$ is less than $-1$, we have the inequality $F\leq 1$. It follows that $\mu_\infty$ is supported on the set $F=1$. In particular for any $\phi$ in the support of $\mu_\infty$, $\phi$ is an isometric immersion.   Thus the curvature of the image of $S$ is $-1$. Since the curvature of $M$ is less than $-1$, this only happens when $S$ is a totally geodesic hyperbolic disk. 

Finally, $\mu_\infty$ is supported on the intersection for all $m$ of $\mathcal F_h(M,\frac{1}{m})$. Since the map $\partial$ is continuous (corollary~\ref{cor:cont}), this intersection  is $\mathcal F_h(M,0)$.

Thus the support of $\mu_\infty$ is contained in  the set of conformal isometries into totally geodesic disks whose boundaries are circles. \end{proof}

\subsubsection{Conclusion}
In this conclusion, we finally use the restricting hypothesis that $h$ is close to a hyperbolic metric so that Theorem~\ref{coro:KK} holds.

Let $\seqm{\delta^0_m}$ be the Kahn--Markovi\'c sequence of laminar measures obtained in Theorem~\ref{theo:equi} for $\mathcal F_{h_0}(M)$. For every $m$, let us write
$$
\delta^0_m=\sum_{i=1}^m \lambda^i_m\delta_{\phi^i_m}\ , \hbox{ with } \sum_{i=1}^m \lambda^i_m=1\ ,
$$
where the $\phi^i_m$ are stable conformal immersions in $(M,h_0)$ equivariant under a cocompact group. Let 
$$
\delta_m=\sum_{i=1}^m \lambda^i_m\delta_{\psi^i_m}\ .
$$
where $\psi^i_m$ is a stable conformal immersion in $(M,h)$ equivariant under a cocompact group and homotopic to $\phi^i_m$.

Let us extract a subsequence so that $\seq{\delta}$ converges to $\mu_\infty$, while, by the Equidistribution Theorem~\ref{theo:equi}, $\seqm{\delta^0_m}$ converges to $\mu_{Leb}$.

Let us consider the projections $\pi_h$ and $\pi_{h_0}$ as in proposition~\ref{pro:defpi} and observe that 
$$
\pi_h(\delta_m)=\pi_{h_0}(\delta^0_m)\ .
$$
Thus by taking limits and using the continuity of $\pi_h$ and $\pi_{h_0}$, we have 
$$
\pi_h(\mu_\infty)=\pi_{h_0}(\mu_{Leb})\ ,
$$
and in particular $\pi_h(\mu_\infty)$ and $\pi_{h_0}(\mu_{Leb})$ have the same support.
We conclude by making two observations
\begin{enumerate}
	\item  the support of $\pi_{h_0}(\mu_{Leb})$ is the set of all circles,
	\item any circle in the support of $\pi_{h_0}(\mu_\infty)$ bounds a totally geodesic hyperbolic plane by proposition~\ref{pro:muinf}.
\end{enumerate}
Thus every circle at infinity bounds a totally geodesic hyperbolic disk in $M$, hence by proposition~\ref{pro:manyhyp},  $h$ is hyperbolic.

\printbibliography
\end{document}
